\documentclass[hidelinks,onefignum,onetabnum]{siamart220329}

\usepackage[utf8]{inputenc}
\usepackage{url}
\usepackage{amsmath,amsfonts, amssymb,MnSymbol,multirow, graphicx}
\usepackage[caption=false]{subfig} 
\usepackage{algorithm,algpseudocode}
\usepackage{booktabs}
\newtheorem{remark}{Remark}
\usepackage{xcolor}
\usepackage[mathscr]{eucal}
\usepackage{amsbsy,bm}
\usepackage{amssymb}
\usepackage{hyperref}
\usepackage{tikz}
\usepackage{pgfplots}
\pgfplotsset{compat=1.18}
\usepackage{pgfplotstable}
\usepackage{calc}
\usetikzlibrary{calc,arrows}
\usetikzlibrary{decorations.pathreplacing}
\usepackage[T1]{fontenc}

\newcommand{\T}[2][]{\boldsymbol{#1\mathscr{\MakeUppercase{#2}}}}

\newcommand{\TT}[2]{\T{T}_{\T{#1},#2}} 
\newcommand{\slice}[2]{} 
\newcommand{\TTslice}[2]{ 
\ifx1#2  
	\renewcommand{\slice}[2]{\TT{#1}{1}(i_1,:)}
\else \ifx#2N 
	\renewcommand{\slice}[2]{\TT{#1}{N}(:,i_N)}
\else 
	\renewcommand{\slice}[2]{\TT{#1}{#2}(:,i_{#2},:)}
\fi \fi
\slice{#1}{#2} 
}

\newcommand{\R}{\mathbb{R}}

\newtheorem{thm}{Theorem}

\usepackage{ifthen}
\usepackage{float}
\floatstyle{ruled}
\newfloat{algorithm}{htb}{alg}
\floatname{algorithm}{Algorithm}
\newcounter{algo@row}
\newcounter{algo@rowindent}
\newcommand{\algofont}[1]{\textbf{#1}}
\newcommand{\algonumbersize}[1]{\scriptsize{#1}}
\newcommand{\algopreitem}[1][\arabic{algo@row}]{\texttt{\algonumbersize{#1}}}
\newcommand{\algoitemskip}{\hspace{\value{algo@rowindent}cc}}
\newcommand{\algonewnestedopen}[2]{
	\newcommand{#1}[1][]{%
		\ifthenelse{\equal{##1}{}}{\item}{\item[{\algopreitem[##1]}]}
		\algoitemskip\algofont{#2}%
		\addtocounter{algo@rowindent}{1}%
		\ignorespaces
	}
}
\newcommand{\algonewnestedaux}[2]{
	\newcommand{#1}[1][]{
		\addtocounter{algo@rowindent}{-1}
		\ifthenelse{\equal{##1}{}}{\item}{\item[{\algopreitem[##1]}]}
		\algoitemskip\algofont{#2}%
		\addtocounter{algo@rowindent}{+1}%
		\ignorespaces
	}
}
\newcommand{\algonewnestedclose}[2]{
	\newcommand{#1}[1][]{
		\addtocounter{algo@rowindent}{-1}
		\ifthenelse{\equal{##1}{}}{\item}{\item[{\algopreitem[##1]}]}
		\algoitemskip\algofont{#2}%
		\ignorespaces
	}
}
\newcommand{\algonewcommand}[2]{
	\newcommand{#1}[1][default]{
		\ifthenelse{\equal{##1}{default}}{\item}{\item[{\algopreitem[##1]}]}%
		\algoitemskip\algofont{#2}%
		\ignorespaces
	}%
}
\newcommand{\algonewkeyword}[2]{\newcommand{#1}{\algofont{#2}}}
\algonewcommand{\STATE}{\ignorespaces}
\algonewcommand{\INPUT}{Input: }
\algonewcommand{\pINPUT}{\phantom{Input: }}
\algonewcommand{\COMPUTE}{Compute: }
\algonewcommand{\OUTPUT}{Output: }
\algonewcommand{\pOUTPUT}{\phantom{Output: }}

\algonewnestedopen{\IF}{if }
\algonewnestedaux{\ELSEIF}{else if }
\algonewnestedaux{\ELSE}{else }
\algonewnestedclose{\ENDIF}{end if }
\algonewnestedopen{\FOR}{for }
\algonewnestedclose{\ENDFOR}{end for }
\algonewnestedopen{\WHILE}{while }
\algonewnestedclose{\ENDWHILE}{end while }
\algonewcommand{\BREAK}{break}%

\algonewkeyword{\To}{to }%
\algonewkeyword{\Do}{do }%
\algonewkeyword{\Then}{then }%
\algonewkeyword{\End}{end }%
\algonewkeyword{\AND}{and }%
\algonewkeyword{\True}{true }%
\algonewkeyword{\False}{false }%
\algonewkeyword{\irbleigs}{irbleigs }%
\algonewkeyword{\tridiag}{tridiag}%
\algonewkeyword{\reorth}{reorth}%

\newcommand{\bA}{{\bf A}}
\newcommand{\bB}{{\bf B}}

\newcommand{\bH}{{\bf H}}
\newcommand{\bI}{{\bf I}}

\newcommand{\bL}{{\bf L}}

\newcommand{\bP}{{\bf P}}
\newcommand{\bQ}{{\bf Q}}
\newcommand{\bR}{{\bf R}}
\newcommand{\bS}{{\bf S}}
\newcommand{\bT}{{\bf T}}
\newcommand{\bU}{{\bf U}}
\newcommand{\bV}{{\bf V}}
\newcommand{\bW}{{\bf W}}

\newcommand{\bb}{{\bf b}}

\newcommand{\bd}{{\bf d}}
\newcommand{\be}{{\bf e}}

\newcommand{\br}{{\bf r}}

\newcommand{\bu}{{\bf u}}
\newcommand{\bv}{{\bf v}}
\newcommand{\bw}{{\bf w}}
\newcommand{\bx}{{\bf x}}
\newcommand{\by}{{\bf y}}
\newcommand{\bz}{{\bf z}}

\usepackage{gensymb}

\newcommand{\bPsi}{{\boldsymbol{\Psi}}}

\newcommand{\bepsilon}{{\boldsymbol{\epsilon}}}

\usepackage{amsmath}
\usepackage{amssymb}
\usepackage[utf8]{inputenc}

\newcommand{\calJ}{\mathcal{J}}

\newcommand{\calQ}{\mathcal{Q}}

\newcommand{\eqs}{\begin{eqnarray}}
\newcommand{\eqe}{\end{eqnarray}}
\newcommand{\eqsn}{\begin{eqnarray*}}
\newcommand{\eqen}{\end{eqnarray*}}
\newcommand{\Ra}[1]{\text{range}(#1)}

\usepackage{optidef}
\usepackage{subfig}
\title{A provably convergent MM-GKS variant for large-scale inverse problems}
\author{Mirjeta Pasha
\thanks{Department of Mathematics, Virginia Tech, Blacksburg, VA, USA.
(\email{mpasha@vt.edu}), (\email{sturler@vt.edu})}
\and Eric de Sturler 
\footnotemark[1]
\and Misha E. Kilmer 
\thanks{Department of Mathematics, Tufts University, Medford, MA, USA. (\email{misha.kilmer@tufts.edu})}}

\ifpdf
\hypersetup{
  pdftitle={Limited Memory MM-GKS for Large-Scale Inverse Problems},
  pdfauthor={}
}
\fi

\begin{document}

\maketitle

\begin{abstract}
For high-quality images with sharp edges, a popular choice for
edge-preserving regularization 
is using a general(ized) $\ell_q$-norm of the gradient of the image. This can be implemented efficiently using the $\ell_2$-norm 
and a sequence of weighted gradients, with weights derived from the current solution estimate. 
We can solve the resulting sequence of regularized least squares problems using
hybrid Krylov subspace methods,
which efficiently compute the 
regularization parameter using the
problem projected on the Krylov
subspace. However, each update of the regularization operator
requires
a new Krylov subspace. 
The majorization-minimization generalized Krylov subspace method (MM-GKS)
addresses this problem by 
using a single, generalized, Krylov subspace (GKS).  
Unfortunately, for large-scale problems, if convergence is not fast, 
MM-GKS has overwhelming memory
requirements 
and computational costs. 

We propose a variant of MM-GKS that 
alternately compresses and expands the search space while maintaining 
strict monotonic convergence. We show that our method
provably converges
to the minimum of the selected 
functional,
even if the search space dimension is kept very small.
This substantially improves on previous theoretical results for MM-GKS, 
where the convergence proof relies on the basis for the solution space (eventually) spanning the full space.
We show that our method can solve large-scale problems efficiently both in terms of memory requirements
and computational complexity.
We further generalize our proposed method 
to handle {\em streaming problems}, where the data is either not all available simultaneously or needs to be treated as such because of the extreme memory requirements.  

We use numerical examples from image deblurring, dynamic photoacoustic tomography, and streaming X-ray computed tomography (CT) to illustrate the effectiveness of our proposed methods.
\end{abstract}

\begin{keywords}
majorization-minimization, regularization, recycling, generalized Krylov subspace, large-scale data, dynamic inverse problem, streaming, computerized tomography.
\end{keywords}

\begin{MSCcodes}
65F10, 65R32, 90C06
\end{MSCcodes}

\section{Introduction}
In this paper, we develop memory-aware and provably convergent algorithms for edge-preserving image reconstruction. 
In particular, we develop a new and efficient variant of the MM-GKS method that 
explores an effective search space while keeping the dimension of the search space small, and still provably converges to the desired regularized functional.  
We consider the linear forward model  
\begin{equation}\label{eq:LinSys}
\bA \bx_{\rm true}
+ \be = \bd ,
\end{equation}
where 
$\bA\in \R^{m\times n}$ represents the discretization of an ill-posed operator, 
the components of $\bx_{\rm true} \in \mathbb{R}^n$ are the pixel or voxel values of 
the medium of interest, $\be \in \R^{m}$ represents additive
noise from discretization, measurement, and rounding errors, and 
$\bd$ represents the
measured data.  

If Gaussian white noise is assumed, one standard regularization technique is generalized Tikhonov regularization 
\begin{equation} \label{eq:ellone} 
  \min_{\bx} \frac{1}{2}\| \bA \bx - \bd\|_2^2 + 
  \frac{\lambda}{q} \|\Psi \bx \|_q^q \, ,
\end{equation}
where $0 < q \leq 2$, $\Psi \in \R^{r \times n}$ is a regularization matrix, and the regularization parameter $\lambda$
determines
the balance of the 
regularization term (second term) versus the data misfit term (first term).  
The regularization operator can be used either to promote edge representation in images or to promote sparsity in the data.  In this paper, we focus on the former, in which case $\Psi$ will be a discrete gradient operator in space and/or time. 
Frequently, a choice of $q = 1$ is made to ensure adequate representation of edges and that the regularization term defines a valid norm \cite{buccini2021linearized, chung2019flexible, chung2022,lindbloom2025priorconditioned}. We therefore use $q = 1$ throughout the paper, but we note that {\it the theory and algorithms we develop here can easily be adapted to other choices of $q$ (in particular, $q<1$)}.
Note also that since $\Psi$ is not the identity, algorithms suited to $\ell_1$ regularization of the model coefficients, such as for LASSO, are not applicable here. 

In this paper, we are concerned with the fast and accurate solution of (\ref{eq:ellone}) when memory is at a premium. In particular, dynamic inverse problems are exceedingly large because they recover multiple/many images simultaneously and the regularization may couple the
individual reconstructions
(with each a large system)
\cite{pasha2021efficient}.  In addition, we consider solution of (\ref{eq:ellone}) in the streaming data case.  In the context of our work, the streaming data setting arises when either (a) the data may not be available all at once, due to the measurement system setup, or (b) all the data may be available but the volume
is so large or the rate so 
high that memory constraints demand that we process only chunks of data at a time.

Since \eqref{eq:ellone} is not differentiable for $q=1$ (or $q < 1$), it is common to replace the regularization term with a differentiable approximation, and instead minimize
\begin{equation} \label{eq:Je}
  \min_{\bx} \mathcal{J}_{\epsilon,\lambda}(\bx) = 
  \min_{\bx} \frac{1}{2} 
  \| \bA \bx - \bd\|_2^2 +
  \lambda \sum_{j=1}^{r} \phi_{\epsilon}( (\Psi \bx)_{j} ) ,
\end{equation}
where $\phi_{\epsilon}(t)$ gives a smoothed approximation to the $\ell_1$ norm for small $\epsilon$.
Here, we use 
\eqs\label{eq:phi_eps}
  \phi_{\epsilon}(t) 
  & = & 
  \sqrt{t^2 + \epsilon^2}.
\eqe
We can use the majorization-minimization (MM) approach \cite{hunter2004tutorial, lange2016mm, rodriguez2008efficient} to solve (\ref{eq:Je}), 
which leads to the solution of a sequence of regularized linear least squares problems,
\begin{equation} \label{eq:reg2}
  \bx^{(k+1)} := \arg \min_{\bx} 
  \frac{1}{2} \|\bA \bx - \bd \|_2^2 + \frac{\lambda}{2} 
  \| \bP_{\epsilon}^{(k)}\Psi \bx \|_2^2 \, , 
\end{equation}
where $\bP_{\epsilon}^{(k)}$ denotes a diagonal weighting matrix, whose entries are determined from the current solution estimate $\bx^{(k)}$; 
see Section \ref{sec:background}. 
For a {\it fixed $k$} and known $\lambda$, (\ref{eq:reg2}) can be solved by a Krylov subspace method.  If $\lambda$ is not known,  both $\lambda$ and 
the solution can be computed simultaneously using hybrid Krylov subspace methods \cite{gazzola2020inner, chung2019flexible, gazzola2018ir}.
However, 
every optimization step, i.e., for each $k$,
a new Krylov subspace must 
be computed to accommodate 
the change in 
$\bP_{\epsilon}^{(k)}$, 
leading to a nested iteration
that constitutes a
substantial computational bottleneck.  
To resolve this problem, a majorization-minimization 
generalized Krylov subspace method (MM-GKS) has 
been proposed \cite{lanza2015generalized, huang2017majorization} that combines norm reweighting 
with solution approximations from 
a single generalized Krylov subspaces (GKS) \cite{lampe2012large}.
Unfortunately, 
computational experiments \cite{huang2017majorization, pasha2021efficient} illustrate that, when MM-GKS is used for large-scale  problems and convergence
is not fast, storage requirements can easily exceed memory capacity, and the 
orthogonalization costs of MM-GKS
can become prohibitive.  
Hence, there is a need to develop alternative approaches that limit the dimension
of the generalized
Krylov space without sacrificing
convergence rate and  reconstruction quality. Recently, a restarted version of MM-GKS \cite{buccini2023limited} has been proposed to address the potentially high memory requirements of MM-GKS. However, 
restarting  
discards important information, necessitating additional work,  
 potentially reducing 
solution quality (see section \ref{subsec:DynPAT}), and convergence to the solution of \eqref{eq:Je} cannot be proved.

Therefore, we develop a more effective method that alternately enlarges and judiciously compresses the solution subspace, keeping the 
dimension bounded independent of the number of iterations, while still preserving the relevant information throughout the process for fast convergence, similar to the approach in \cite{jiang2021hybrid}.
This is particularly relevant 
for handling streaming data
and (large) dynamic inverse
problems.  

However, keeping the solution space dimension bounded leads to a theoretical 
issue. The convergence proof of MM-GKS relies on eventually spanning the full solution space, $\R^n$, \cite{lanza2015generalized,
huang2017majorization,
chan2014}.
Since our proposed method keeps the search space dimension bounded, just as for the restarted method, the
previous approaches to a convergence proof for MM-GKS cannot be used\footnote{Of course, in practice, relying on spanning the full solution space, $\R^n$, is
not useful.}. 
Therefore, we consider two changes to the MM-GKS algorithm. First, the new algorithm expands the search space with the gradient of the updated quadratic majorant 
rather than the gradient of the current
quadratic majorant (see Section \ref{ssec:modMMGKS}). Second, we propose a number of methods 
that select appropriate subspaces of the search 
space to keep when compressing 
the search space. This allows us
to develop an algorithm that (1) provably converges to the minimum of 
\eqref{eq:Je} for a fixed
regularization parameter (this is the context in which previous convergence proofs have been derived), (2) can use hybrid Krylov method techniques to estimate the regularization parameter
dynamically (avoiding an 
expensive outer iteration to determine a good regularization parameter),
and (3) maintains fast convergence.
Extending our method to provably converge to the minimum
of \eqref{eq:Je} while dynamically determining an optimal regularization parameter is future work.

The main innovations
in this paper are as follows:
\begin{enumerate}
  \item We propose a variant of MM-GKS that extends the search space with a different search direction that allows much stronger theoretical results than have been proved previously for MM-GKS itself.
  
  \item We prove convergence of the new MM-GKS variant to the minimum of the functional \eqref{eq:Je}, 
  using only a fixed, small maximum search space dimension. 
  This approach would also allow stronger convergence results for a correspondingly adapted version of the recently proposed restarted MM-GKS \cite{buccini2023limited} (it would no longer be just a restarted method, though).
  This substantially improves the previous convergence results in \cite{huang2017majorization,buccini2023limited}.  
  
  \item For fast convergence with limited memory requirements, we combine the new version of MM-GKS with a variant that alternates between  
  \begin{itemize}
    \item expanding the solution space by a modest number of basis vectors up to
    a chosen maximum dimension of the solution space;
    \item compressing the solution space to a chosen dimension using techniques that retain important information;
  \end{itemize}
  
  \item We give a new procedure for the initialization of the solution basis so that edge information is encoded earlier into the process;

  \item We develop a new LM-MM-GKS approach for
  streaming data, which can handle 
  \begin{itemize}
    \item chunks of data (all pertaining to the same solution) that are available only sequentially, and
    \item storage limitations that  
    allow storing only a modest 
    number of rows of the system; 
  \end{itemize}
  
  \item We provide rigorous numerical comparisons against competing methods, on several applications, illustrating the superiority of our method with regard to both storage and quality of reconstruction. 
\end{enumerate}

The paper is organized as follows.  In Section \ref{sec:background}, we discuss
the MM-GKS method.  
In Section \ref{sec: recycle}, we make the case for more memory and computationally efficient variants
of MM-GKS, propose our limited-memory MM-GKS algorithm (LM-MM-GKS), 
and discuss its memory and computational efficiency.
We show how to adapt LM-MM-GKS to handle streaming data in Section \ref{sec: streaming}.
In Section \ref{sec: compression}, we 
discuss several methods for
compressing the solution space, and, in Section \ref{sec:Convergence}, we give the convergence proof of the new
LM-MM-GKS algorithm.
Extensive numerical experiments are presented in
Section \ref{sec:NumericalExperiments}. We  
provide conclusions and suggestions for 
future work in Section \ref{sec: conclusion}.  
\section{Background} \label{sec:background}
To make the paper self-contained, we first provide some background on the MM-GKS method.
First proposed in \cite{lanza2015generalized}, the MM-GKS method uses a majorization-minimization approach, detailed below, to construct a sequence of iterates 
$\bx^{(k)}$ that converges to a stationary point of $\mathcal{J}_{\epsilon,\lambda}(\bx)$. 
Note that throughout this section, the value of $\lambda$ is assumed to be fixed and known, although in practice, a suitable value will need to be computed.  
\subsection{The majorization formulation}
Huang et al. \cite{huang2017majorization} describe two approaches to construct a quadratic 
tangent majorant to $\mathcal{J}_{\epsilon,\lambda}(\bx)$ in (\ref{eq:Je}) at an approximate solution $\bx^{(k)}$,
proposing adaptive and fixed 
quadratic majorants. The latter are cheaper to compute but may give slower convergence. 
In this paper, we focus on the adaptive quadratic majorants, but all results also 
hold for fixed quadratic majorants. 
\begin{definition}[\cite{huang2017majorization}]\label{def: 1}
The functional $\bx\mapsto\mathcal{Q}(\bx,\bv):\R^{n}\rightarrow \R$ is a 
quadratic tangent majorant for $\mathcal{J}_{\epsilon,\lambda}(\bx)$ 
at $\bv\in\R^n$, if for all $\bv \in \R^n$,
\begin{enumerate}
\item $\bx\mapsto\mathcal{Q}(\bx,\bv)$ is quadratic,
\item $\mathcal{Q}(\bv,\bv)=\mathcal{J}_{\epsilon,\lambda}(\bv)$,
\item $\bigtriangledown_{\bx}\mathcal{Q}(\bv,\bv)=\bigtriangledown_{\bx}\mathcal{J}_{\epsilon,\lambda}(\bv)$,
\item $\mathcal{Q}(\bx,\bv)\geq \mathcal{J}_{\epsilon,\lambda}(\bx)\quad\forall \bx\in \R^{n}$.
\end{enumerate}
\end{definition}
Let $\bx^{(k)}$ be an approximate solution of \eqref{eq:Je}, define
$\bu^{(k)}$ and the weights
$\bw_{\epsilon}^{(k)}$ as 
\begin{eqnarray}
\label{eq:uk}
\bu^{(k)} & = & \Psi\bx^{(k)} , \\
\label{eq: weightslplq}
\bw_{\epsilon}^{(k)} & = & \left((\bu^{(k)})^2+\epsilon^2\right)^{-1/2},
\end{eqnarray}
where all operations 
on the right-hand side
of \eqref{eq: weightslplq} 
are done element-wise.
Also define 
\begin{equation}
\label{eq: weightingMatrix}
    \bP_{\epsilon}^{(k)} = ( \text{diag}(\bw^{(k)}_{\epsilon}))^{1/2} ,
\end{equation}
and consider the quadratic tangent majorant for 
$\mathcal{J}_{\epsilon,\lambda}(\bx)$,  
with weighting matrix 
$\bP_{\epsilon}^{(k)}$, 
%
\begin{equation}\label{eq: QuadraticMajorantQ}
\begin{array}{rcl}
\mathcal{Q}(\bx, \bx^{(k)})  &=&
\frac{1}{2}\|\bA\bx-\bd\|^{2}_{2}
+\frac{\lambda}{2} \|\bP_{\epsilon}^{(k)}\Psi\bx\|^{2}_{2} +
c_k,
\end{array}
\end{equation}
where $c_k$ is a suitable constant\footnote{A non-negative value of $c_k$ is technically required for the second condition in the definition to hold. However, since the minimizer is independent of the value of $c_k$, we will not discuss it further.} that is independent of
$\bx$.
We refer the reader to \cite{lanza2015generalized} for the derivation of the weights $\bw_{\epsilon}^{(k)}$ 
and the value 
of $c_k$.

\subsection{The minimization step}\label{sec: MM_GKS}
In the MM approach, 
the next iterate, 
$\bx^{(k+1)}$, 
is 
the minimizer of 
\eqref{eq: QuadraticMajorantQ}.
Setting the gradient to zero
leads to the normal equations
\begin{equation}\label{eq: normaleqQuadMajorant}
(\bA^T\bA + \lambda  \Psi^{T} (\bP_{\epsilon}^{(k)})^2\Psi)\bx = \bA^T\bd.
\end{equation}
The system \eqref{eq: normaleqQuadMajorant} has a unique solution if
\begin{equation}\label{eq: NANL}
\mathcal{N}(\bA^T\bA)\cap \mathcal{N}(\Psi^{T} (\bP_{\epsilon}^{(k)})^2\Psi)=\{0\},
\end{equation}
which typically holds in practice.
In that case, for given $\lambda$,
the solution of \eqref{eq: normaleqQuadMajorant} is the unique minimizer
of the quadratic tangent majorant $\mathcal{Q}(\bx, \bx^{(k)})$.
In practice, $\lambda$ is usually not known in advance, and we discuss its computation in Section \ref{sec: RegParm}.

Unfortunately, solving \eqref{eq: normaleqQuadMajorant} for large $\bA$, $\Psi$, and changing
$\bP_\epsilon^{(k)}$, may be computationally prohibitive.  Therefore, 
the 
MM-GKS method, proposed
in \cite{huang2017majorization}, approximates
the MM iterates 
by projection onto low dimensional subspaces. 
The method starts with a few steps of Golub-Kahan
bidiagonalization (GKB) 
with $\bA$ and $\bu_1 = \bd/\| \bd \|_2$ to determine an initial subspace $\bV_\ell$ such that 
\begin{equation}\label{bdiag}
\bA\bV_{\ell}=\bU_{\ell+1}\bB_{\ell},
\end{equation}
and $\bV_\ell$ and
$\bU_{\ell+1}$ have orthonormal columns.
Given $\bx^{(k)}$ and 
$\bP_{\epsilon}^{(k)}$ 
obtained from 
$\bx^{(k)}$ (with $k=0$ in the first step),
MM-GKS computes the thin QR factorizations 
\begin{align}\label{eq: QR}
\bA\bV_{\ell+k} = \bQ_{\bA}\bR_{\bA}, \quad
\bP_{\epsilon}^{(k)}\Psi\bV_{\ell+k} = \bQ_{\Psi}\bR_{\Psi} .
\end{align}
Restricting 
$\bx^{(k+1)}$ to 
${\rm range}
(\bV_{\ell + k})$, 
$\bx^{(k+1)} = 
\bV_{\ell + k} \bz^{(k+1)}$, 
and adjusting
\eqref{eq: normaleqQuadMajorant}
accordingly leads to the
following small system
of equations
for $\bz^{(k+1)}$, 
\begin{equation} \label{eq: minKryov2}
  (\bR_\bA^T\bR_\bA + 
  \lambda \bR_{\Psi}^T\bR_{\Psi})\bz^{(k+1)} =
  \bR_\bA^T\bQ_\bA^T\bd .
\end{equation} 
Its solution gives for
the residual vector of the (full) regularized normal equations:
\begin{equation}\label{eq:residual}
\br^{(k+1)}=\bA^T(\bA\bV_{\ell+k}\bz^{(k+1)} -\bd)+\lambda  \Psi^T(\bP_{\epsilon}^{(k)})^2\Psi
\bV_{\ell+k}\bz^{(k+1)}.
\end{equation}
We expand the solution subspace with the normalized residual $\bv_{\rm new}=\br^{(k+1)}/\|\br^{(k+1)}\|_2$,
\begin{equation}\label{eq: enlargeMMGKS}
\bV_{\ell+k+1}=[\bV_{\ell+k},\bv_{\rm new}]\in\R^{n\times(\ell+k+1)}.\end{equation} 
In exact arithmetic,  $\bv_{\rm new}$ is orthogonal to the columns of 
$\bV_{\ell+k}$, 
but in computer arithmetic 
reorthogonalization of 
$\bV_{\ell+k+1}$ is typically needed.
Next, $\bP_{\epsilon}^{(k)}$ is updated for the new solution estimate, and the process in steps \eqref{eq: QR} - \eqref{eq: enlargeMMGKS} is repeated, expanding the solution space until a 
sufficiently accurate solution is reached.  
\subsection{Choice of the regularization parameter} \label{sec: RegParm}

For known $\lambda$ and using (eventually) the full solution space $\R^n$,
the MM-GKS method produces
a sequence of iterates $\bx^{(k)}$ that converges to the minimizer of $\mathcal{J}_{\epsilon,\lambda}$. However, 
in practice, the optimal 
$\lambda$ 
is not known a priori. 
On the other hand, a suitable regularization parameter $\lambda = \lambda^{(k)}$ for each 
projected problem (\ref{eq: minKryov2}) can
be determined by various methods, 
such as generalized cross-validation (GCV), since the dimension of the projected problem is small. 

We summarize the MM-GKS method in Algorithm \ref{Alg: MM-GKS}.

\begin{algorithm}[!ht]
\caption{MM-GKS \cite{lanza2015generalized}} 
\label{Alg: MM-GKS}
\begin{algorithmic}[1]
\INPUT{$\bA, \Psi, \bd, \bx^{(0)}, \epsilon$} 
\OUTPUT{An approximate solution $\bx^{(k+1)}$}
\Function{ $\bx^{(k+1)} = $ MM-GKS}{$\bA, \Psi, \bd, \bx^{(0)}, \epsilon$}
  \State Generate initial solution space basis $\bV_{\ell}\in \R^{n\times \ell}$
  (see \ref{bdiag})
  \For {$k = 0, 1, 2, \ldots$ 
    until convergence}
    \State $\bu^{(k)}=\Psi \bx^{(k)}$
    \State $\bw^{(k)}_{\epsilon}=
      \left( (\bu^{(k)})^2+
      \epsilon^2 \right)^{-1/2}$	
    \State $\bP_{\epsilon}^{(k)} = {\rm
      diag} (\bw_{\epsilon}^{(k)})^{1/2}$
    \State Update QR-decomposition $\bA\bV_{\ell+k} = \bQ_{\bA}\bR_{\bA}$
    \State Compute QR-decomposition $\bP_{\epsilon}^{(k)} \Psi \bV_{\ell+k}=\bQ_{\Psi}\bR_{\Psi}$
    \State Select $\lambda^{(k)}$ using heuristic method for (\ref{eq: minKryov2}) (e.g., GCV). 
    \State Solve for $\bz^{(k+1)}$ from 
      (\ref{eq: minKryov2}) with selected $\lambda^{(k)}$
    \State $\bx^{(k+1)}=\bV_{\ell+k}\bz^{(k+1)}$
    \If { $\| \bx^{(k+1)} - \bx^{(k)}\|_2 / \|\bx^{(k)}\|_2 \leq tol_1$ }
      \State \text{break;}
    \EndIf
    \State $\br^{(k+1)} = \bA^T(\bA\bV_{\ell+k}
      \bz^{(k+1)} -\bd) + \lambda^{(k)} \Psi^T(\bP_{\epsilon}^{(k)})^2  \Psi\bV_{\ell+k}\bz^{(k+1)}$
    \State $\br^{(k+1)}  = \br^{(k+1)} -
      \bV_{\ell+k} \bV_{\ell+k}^{T} \br^{(k+1)}$ \Comment{Reorthogonalize, if needed}
    \State $\bv_{\rm new} = 
      \frac{ \br^{(k+1)} }
      { \|\br^{(k+1)}\|_{2} }$;  $\bV_{\ell+k+1}=[\bV_{\ell+k}, \bv_{\rm new}]$  \Comment{Enlarge the solution subspace} 
  \EndFor
\EndFunction
\end{algorithmic}
\end{algorithm}

\section{A Limited Memory MM-GKS variant 
with guaranteed convergence}
\label{sec: recycle}

In this section, we describe a limited memory variant of MM-GKS that provably converges to 
the minimum of the smoothed functional $\calJ_{\epsilon,\lambda}$
(or a stationary point for a non-convex functional), 
even when the dimension of the search space remains bounded. The key ingredients are (i)~alternating expansion and compression of the search space, and (ii)~a subtle but critical change in the  
subspace extension using updated weights, as detailed below. 
The convergence is proved in Section~\ref{sec:Convergence}.

MM-GKS methods have been widely used for large-scale inverse problems \cite{buccini2020modulus, pasha2021efficient}. However, for large-scale problems
where convergence is not
fast,
storing the 
search space basis vectors can easily exceed the memory capacity, and 
the algorithm may have excessive computational cost. 
In particular, step~10 of Algorithm 2.1 leads to a computational cost of $O(r k(\ell+k)^2)$ flops for $k$
iterations beyond the initialization, where generally $r \geq n$. 
Moreover, if we constrain the solution space dimension below $n$,
the conditions for the convergence proof in \cite{huang2017majorization}
do not hold. 
Therefore, we propose a variant of MM-GKS, for which convergence to the minimum of the smoothed functional \eqref{eq:Je}
can be proved, even if the solution space dimension is
kept very small using
some truncation scheme.

The method we propose, 
LM-MM-GKS, modifies MM-GKS by 
(a) initializing with a better low-dimensional search space,
(b)
keeping the memory requirements constant,  without sacrificing reconstruction quality, 
and (c) extending the search space each step with a search vector that guarantees convergence to the
minimum of \eqref{eq:Je}.  
Indeed, our numerical results in 
Section \ref{sec:NumericalExperiments} show that we can improve the reconstruction quality over typical MM-GKS. 
LM-MM-GKS alternates between two main steps, enlarging and compressing the solution 
space, until a desired 
solution is found. 
The LM-MM-GKS method is presented in Algorithm \ref{Alg: RMMGKS}.
\subsection{Initialization} \label{ssec:init}
The basis vectors of the initial solution space in MM-GKS,  
$\bV_{\ell}$,  
are known to be fairly smooth for small $\ell$
\cite[Section 4.3]{hansen2008noise}, which delays convergence to solutions with edge information.  
Therefore, after computing
$\bV_\ell$, $\lambda^{(0)}$, $\bx^{(1)}$, and $\bP_\epsilon^{(1)}$,
our method generates {\it an improved 
seed basis that encodes the edge information we have obtained so far}, 
$\bV_{k_{\rm min}}$, whose columns are an orthonormal basis for the Krylov subspace
(with $k_{\rm min}$ the chosen minimal basis dimension)
\[ \mathcal{K}_{k_{\rm min}}(\bA^T \bA + \lambda^{(0)} \Psi^T (\bP_\epsilon^{(1)})^2 \Psi, \bA^T \bd).\]
\begin{remark}
In comparison with 
MM-GKS, the improved initialization space $\bV_{k_{\rm min}}$ has an additional cost of one MM-GKS expansion step (see below) and $k_{\rm min}$ matvecs with $\bA$, $\bP_{\epsilon}^{(1)} \Psi$ and their transposes. We do not assume that $k_{\rm min} = \ell$.  
\end{remark}

\subsection{Modifying the MM-GKS Update} \label{ssec:modMMGKS}
Standard MM-GKS, 
after computing $\bx^{(k+1)}$, 
extends the solution space with
the residual for $\bx^{(k+1)}$ 
in the regularized normal equations (\ref{eq:residual}) with
the weight matrix $\bP_\epsilon^{(k)}$
computed from $\bx^{(k)}$.  
However, 
having computed $\bx^{(k+1)}$, we can first compute 
$\bP_\epsilon^{(k+1)}$ and then
compute the {\it residual for the regularized normal equations 
with the new weight matrix}. 
{\em This alternative residual} corresponds to the gradient 
$\nabla_{\bx}{\cal Q}(\bx^{(k+1)},\bx^{(k+1)})$, which can be easily verified from \eqref{eq: QuadraticMajorantQ}. 
Making this subtle change in our code and extending the solution space with $\nabla_{\bx}Q(\bx^{(k+1)},\bx^{(k+1)})$, ensures our limited memory variant of MM-GKS converges
to the minimum of $\calJ_{\epsilon,\lambda}$, as proved in Section \ref{sec:Convergence}.
This difference can be seen by comparing line 17 of Algorithm 2.1
(the original MM-GKS)
with line 11 of our Algorithm 3.1, which describes how we enlarge our search space.

\subsection{Limited Memory Details}
To limit the search space dimension for an arbitrary number 
of iterations, the main loop of 
LM-MM-GKS
consists of two phases: a basis expansion phase and a basis compression phase.  
In the compression phase, 
we
determine a subspace of small dimension that
is deemed important for convergence
to recycle as the new search space (discussed below). 

\subsubsection{Basis Expansion}
\begin{algorithm}[!th]
\caption{Enlarge }%
\label{Alg: Enlarge}
\begin{algorithmic}[1]
\Function{$[\bx^{({\rm out})},
  \lambda^{({\rm out})}, 
  \bV_{k_{\rm out}}, 
  \bQ_\bA, 
  \bR_{\bA}, 
  \bR_{\Psi}, 
  \bu^{({\rm out})},
  \br^{({\rm out})}, 
  \bz^{({\rm out})}, 
  k_{\rm out}] = $ 
\newline \mbox{\;} Enlarge}
{$\bA, \bPsi, \bV_{k_{\min}},
\bA\bV_{k_{\min}}, \bPsi\bV_{k_{\min}},
\bd, \bx^{(0)}, \bu^{(0)}, 
s, \epsilon, tol_1, tol_2$} \vspace{.5em}

\State $\bw^{(0)}_{\epsilon} = \left((\bu^{(0)})^2+\epsilon^2\right)^{-1/2}$; $\bP_{\epsilon}^{(0)} = (\rm diag(\bw_{\epsilon}^{(0)}))^{1/2}$;

\For {$k=0,1,2,\ldots, s-1$}
    
  \State $[\bQ_{\bA},\bR_{\bA}] = 
  \textrm{updateQR}( \bA\bV_{k+k_{\min}} )$ 
    
  \State $[\bQ_{\bPsi},\bR_{\bPsi}] = \textrm{QR}( \bP_{\epsilon}^{(k)}\bPsi \bV_{k+k_{\min}})$
  
  \State Compute $\lambda$ using heuristic method for (\ref{eq: minKryov2}) (e.g., GCV).
  \vspace{0.5em}

  \State $\bz^{(k+1)} = \arg \min_{\bz} 
  \left\| \left[ 
  \begin{array}{c} 
  \bR_{\bA} \\ \sqrt{\lambda}\bR_{\Psi} 
  \end{array} \right] \bz - 
  \left[ \begin{array}{c}
    \bQ_{\bA}^T \bd \\ 0
  \end{array} \right] \right\|_2$ \vspace{0.5em}
  \Comment{using QR}
  
  \State $\bx^{(k+1)} = \bV_{k+k_{\min}}\bz^{(k+1)}$\;
  
  \State $\bu^{(k+1)} = \bPsi\bx^{(k+1)}$\;

  \State $\bw^{(k+1)}_{\epsilon} = \left((\bu^{(k+1)})^2+\epsilon^2\right)^{-1/2}$; $\bP_{\epsilon}^{(k+1)} = (\rm diag(\bw_{\epsilon}^{(k+1)}))^{1/2}$;
  
  \State $\br^{(k+1)}=\bA^T(\bA \bx^{(k+1)} -\bd)+\lambda^{(k)} \Psi^T(\bP_{\epsilon}^{(k+1)})^2\bu^{(k+1)}$

  \State $\bv = \br^{(k+1)} -
    \bV_{k+k_{\rm min}}
    \bV_{k+k_{\rm min}}^{T} \br^{(k+1)}$\;
    \Comment{Reorthogonalize if needed}
    
  \State $\bv_{\rm new} =
    \frac{\bv}{\|\bv\|_{2}}$
    
  \State $\bV_{k+k_{\rm min}+1} =
    [\bV_{k+k_{\rm min}} \;\; \bv_{\rm new}]$ \;\Comment{Enlarge the solution subspace}
    
  \State $\bA\bV_{k+k_{\min}+1} =
    [\bA\bV_{k+k_{\min}} \; \bA\bv_{\rm new}]$

  \State $\bPsi\bV_{k+k_{\min}+1} =
    [\bPsi\bV_{k+k_{\min}} \; 
    \bPsi \bv_{\rm new}]$    
  
  \If{\; 
    $\|\bx^{(k+1)} -
    \bx^{(k)}\|_2 / \|\bx^{(k)}\|_2 \leq tol_1$
    \; OR \; 
    $\|\br^{(k+1)}\|_2 \leq tol_2$ \;}
    
    \State $k_\textrm{out} =
      k_{\min} + k + 1$
    
    \State \text{break}
  \EndIf

\EndFor
\State $k_{\rm out} = s+k_{\min} $
\State \{{\em the subsequent assignments are for clarity; no copy is done}\}
\State $\bx^{(\textrm{out})} =
  \bx^{(k+1)}$; \;
  $\br^{(\textrm{out})} = \br^{(k+1)}$; \;
  $\bz^{(\textrm{out})} = \bz^{(k+1)}$; \;
  $\lambda^{(\textrm{out})} = \lambda$; \;
  $\bu^{({\rm out})} = 
  \bu^{(k+1)}$
  
\State $\bV_{k_{out}} = \bV_{k_{\rm min}+k}$\;

\EndFunction
\end{algorithmic}
\end{algorithm}

Apart from the change discussed in subsection \ref{ssec:modMMGKS}, this phase 
mimics the basis expansion in MM-GKS, but allows at most
$k_{\rm max}$ basis vectors.  
The first expansion step begins with the basis 
$\bV_{k_{\rm min}}$ 
described in subsection \ref{ssec:init} above
and adds at most $s = k_{\rm max} - k_{\rm min}$ vectors. 
Once LM-MM-GKS reaches $k_{\max}$ basis vectors, it switches to the compression phase outlined in the next subsection. 
If it has not converged, another expansion phase will follow the basis compression.
The compression phase
computes a `compressed' search space,
of lower dimension,
that contains the best solution so far as well as the corresponding gradient of the updated quadratic tangent functional. 
Then, our algorithm roughly repeats steps 6--19 of Algorithm 2.1 with the  modified basis update discussed above for
$j = 0,\ldots,s-1$ or until the convergence criterion is met.  
The expansion phase is presented in Algorithm \ref{Alg: Enlarge}.
We note that the QR decomposition of $\bA\bV$ on line 4 of Algorithm~\ref{Alg: Enlarge} can be efficiently updated from the previous iteration, since only one new column is appended. The QR decomposition of $\bP_{\epsilon}^{(k)}\bPsi\bV$ on line 4 must be recomputed, as $\bP_{\epsilon}^{(k)}$ changes at each iteration. However, no additional matvecs with $\bPsi$ are required. Computational costs are discussed in detail in Section~\ref{sec: costs}.

\subsubsection{Basis Compression}\label{subsec: basiscompression}
\begin{algorithm}[!b]
\caption{Compress }
\label{Alg: Compress}
\begin{algorithmic}[1]

\Function{$[\bV_{k_{\min}}] =$ 
\newline \mbox{\;} Compress}
    {$\bV, \bQ_{\bA}, \bR_{\bA}, \bR_{\Psi}, \bd, \bx, \br,
    \bz, k_{\rm min}, \lambda$}

  \State $ \bW = \chi\left( \bR_{\bA}, \bR_{\Psi},  \bQ_{\bA}, \bd, 
  \bz, \lambda, k_{\min} \right)$ \; (with $\bW$ isometric) \Comment{see Section \ref{subsec: basiscompression}}

  \State $\widetilde{\bV} = \bV \bW \qquad\quad
    (\; \in \mathbb{R}^{n \times (k_{\min}-2)} \;)$ 
  \State $[\widetilde{\bV}, \verb.~.] = 
    \textrm{updateQR}([\widetilde{\bV} \;\; \bx \;\; \br])$
  \Comment{add solution and residual
    to search space}

  \State $\bV_{k_{\rm min}} =
    \widetilde{\bV}$
\EndFunction
\end{algorithmic}
\end{algorithm}
If the solution obtained at the end of the expansion phase is not sufficiently accurate, 
the dimension of the solution space
must be reduced to $k_{\rm min}$ 
in a way that
{\em keeps important solution information
in the compressed solution space}. 
In addition, we ensure that the retained solution space contains the current 
best solution and the gradient of the
corresponding quadratic majorant. This guarantees strict
monotonic decrease of 
${\cal J}_{\epsilon,\lambda}$
for a 
fixed regularization parameter
$\lambda$, which  follows directly from 
Definition \ref{def: 1} and 
the minimization of each 
iteration. For a similar argument, 
but only for monotonic decrease, see \cite{buccini2023limited}. 
Here, we describe the compression in a generic fashion, allowing various choices, 
in particular, those from \cite{jiang2021hybrid}.
In Section \ref{sec: compression}, we give details on specific compression routines
with certain desired properties.

Using the matrices
$\bR_{\bA}$, $\bR_{\Psi}$, $\bQ_\bA$,
the data vector $\bd$, the solution coefficient vector $\bz$,
and the regularization parameter, 
$\lambda$, 
we 
compute an isometric
matrix $\bW^{k_{\rm max} \times (k_{\rm min}-2)}$, and we set 
$\widetilde{\bV} := \bV_{k_{\rm max}} \bW$.
Next, we ensure that 
the compressed space contains both the current 
best solution $\bx$ and the residual $\br = \nabla_{\bx}\calQ(\bx,\bx)$, as required by the convergence proof in Section~\ref{sec:Convergence}; see Algorithm~\ref{Alg: Compress}, line 4.
The compression procedure is summarized in Algorithm \ref{Alg: Compress}.
  
\begin{remark}
In this paper and our algorithms, we focus on the $\ell_1$ norm for the regularization term;   
however, it is straightforward to extend this to more general $\ell_q$ norms with $0<q\leq2$. In particular, to obtain a general $\ell_q$ constrained LM-MM-GKS method, we 

change the assignments $w_{\epsilon}^{(k)} = (\, (u^{(k)})^2+\epsilon^2 \,)^{-1/2}$
in Algorithms \ref{Alg: Enlarge} and \ref{Alg: RMMGKS} 
to 
$w_{q,\epsilon}^{(k)} = (\, (u^{(k)})^2+\epsilon^2 \,)^{q/2 - 1}$;
see \cite{huang2017majorization}.
\end{remark}

\begin{algorithm}[!ht]
\caption{LM-MM-GKS}%
\label{Alg: RMMGKS}
\begin{algorithmic}[1]
\Function{$[\bx, \bV_{k_{\rm min}}]= $ LM-MM-GKS}{$\bA, \bPsi, \bV, \bd, \bx^{(0)}, \epsilon$, $k_{\rm min}$, $k_{\rm max}$, $tol_1$, $tol_2$} 
\STATE
\hspace{5em} $\rhd$ 
If $\bV \neq [\;]$, then $\bx^{(0)},\br^{(0)} \in \Ra{\bV}$ and 
$\lambda^{(0)}$ is given

  \State  $s = k_{\rm max}-k_{\rm min}$\;
  \If{$\bV$ provided} 
    \State $\bV_{\ell} = \bV$
    \State $\bu^{(0)}=\Psi \bx^{(0)}$\;
    \State	 $\bw^{(0)}_{\epsilon}=\left((\bu^{(0)})^2+\epsilon^2\right)^{-1/2}$\;	
    \State $\bP_{\epsilon}^{(0)} = (\rm diag(\bw_{\epsilon}^{(0)}))^{1/2}$ \Comment{Compute the weights}
  \Else
    \State Generate initial subspace basis
    using GKB: $\bV_{\ell}\in \R^{n\times \ell}$ 
    \State $\bu^{(0)}=\Psi \bx^{(0)}$\;
    \State	 $\bw^{(0)}_{\epsilon} = 
    ( (\bu^{(0)})^2+\epsilon^2 )^{-1/2}$	
    \State $\bP_{\epsilon}^{(0)} = (\rm diag(\bw_{\epsilon}^{(0)}))^{1/2}$  \Comment{Compute the weights}
  \EndIf
  \State Compute QR-decomposition $\bA\bV_{\ell} = \bQ_{\bA}\bR_{\bA}$
  \State Compute QR-decomposition $\bP_{\epsilon}^{(0)}\Psi \bV_{\ell}=\bQ_{\Psi}\bR_{\Psi}$\; 
  \If{$\lambda^{(0)}$ not provided}
    \State Select $\lambda^{(0)}$ using heuristic method for (\ref{eq: minKryov2}) (e.g., GCV).
  \EndIf
  \State $\bz^{(1)}=( \bR_{\bA}^T\bR_{\bA} + \lambda^{(0)} \bR_{\Psi}^T\bR_{\Psi})^{-1}\bR_{\bA}^T\bQ_{\bA}^T\bd $; \; $\bx^{(1)}=\bV_{\ell} \bz^{(1)}$
  \State $\bu^{(1)}=\Psi \bx^{(1)}$; \; 
  $\bw^{(1)}_{\epsilon} = ( (\bu^{(1)})^2+\epsilon^2 )^{-1/2}$; \; $\bP_{\epsilon}^{(1)} = (\rm diag(\bw_{\epsilon}^{(1)}))^{1/2}$
  \State $\br^{(1)} =
  \bA^T(\bA \bx^{(1)} - \bd) + \lambda^{(0)}\bPsi^T
  \left(\bP_{\epsilon}^{(1)}\right)^2
  \bPsi \bx^{(1)}$
  \State Do  $k_{\min}-2$ GKB steps to compute isometric $\bV$ such that
  \newline \mbox{\qquad \qquad}
  $\Ra{\bV} = \mathcal{K}_{ k_{\min}-2 } (\bA^T\bA + \lambda^{(0)} \Psi^T\left(\bP^{(1)}_{\epsilon} \right)^2 \Psi, \bA^T\bd)$ 

  \State  [$\bV_{k_{\min}}$, \texttildelow ] = 
  \textrm{updateQR}$([\bV \;\; \bx^{(1)} \;\; 
    \br^{(1)} ])$ 

  
  \For {$i= 1,2,\ldots, i_{\rm max}$}{ 
    \State $[\bx^{(i+1)}, \lambda^{(i)}, \bV_{k_{\rm out}},   
    \bQ_{\bA},
    \bR_{\bA}, 
    \bR_{\Psi}, \bu^{(i+1)},
    \br^{(i+1)},
    \bz^{(i+1)}, k_{\rm out}
    ] =$ 
    \newline \mbox{\qquad \qquad} 
    \Call{Enlarge}{$\bA, 
    \bPsi, 
    \bV_{k_{\rm min}}, 
    \bA\bV_{k_{\rm min}},
    \bPsi\bV_{k_{\rm min}},
    \bd, \bx^{(i)}, \bu^{(i)}, 
    s, \epsilon, tol_1, tol_2$}
    \vspace{0.5em}
    
    \State $\bV_{k_{\rm min}} =$ \Call{Compress}{$\bV_{k_{\rm out}}, \bQ_{\bA},
    \bR_{\bA}, \bR_{\Psi}, 
    \bd, \bx^{(i+1)},
    \br^{(i+1)}, \bz^{(i+1)},
    k_{\rm min}, \lambda^{(i)}$}  
    
    \State \Comment{Compress the subspace  $\bV_{k_{\rm out}}$ to $\bV_{k_{\rm min}}$ \footnotemark }
    
    \If{$\|\bx^{(i+1)} -
      \bx^{(i)}\|_2/
      \|\bx^{(i)}\|_2 \leq tol_1$ 
     \; OR \;
     $\|\br^{(i+1)}\|_2 \leq tol_2$ \;}
      \State \text{break;}
    \EndIf
}
\EndFor
\EndFunction
\end{algorithmic}
\end{algorithm}

\subsection{Computational and Storage Costs}\label{sec: costs}
We briefly discuss the main costs of MM-GKS and compare these with those of LM-MM-GKS.

For clarity, in this paper, we present straightforward implementations of LM-MM-GKS, but more efficient implementations are possible and will be future work. Note that minimizing storage or flops or data movement will typically lead to different implementation choices.

The memory bottleneck for MM-GKS is that the storage, mainly for the vectors of length $n$ or $r$, grows linearly with the number of iterations. The computational bottleneck for MM-GKS is the repeated (thin) QR-decomposition of the matrix 
$\bP^{(k)}_{\epsilon}\Psi\bV_{\ell+k}$,
which must be computed each iteration, as $\bP^{(k)}_{\epsilon}$ changes every iteration. This alone leads to a computational cost of 
$O(\,r(k^3 + k^2\ell + k\ell^2)\,)$, where $\ell$ is the dimension of the initial search space,  $\bV_{\ell}$, $k$ is the number of iterations, and $r$ is the row dimension 
of the regularization matrix $\boldsymbol{\Psi}$. Typically, $r \geq n$; for example, in 
subsection 
\ref{Sec: ImgDebl},
$r \approx 2n$.

In contrast, the storage cost of LM-MM-GKS is bounded by a constant, 
independent of the number of iterations, as the method uses at most 
$k_{\max}$ basis vectors. 
The cost of repeated QR-decompositions of 
$\bP^{(k)}_{\epsilon}\boldsymbol{\Psi} \bV_{j}$, 
with $j$ the search space dimension, is now {\em linear} in the number of iterations, as $j \leq k_{\max}$, the maximum search space dimension. Of course, it is (still) roughly cubic in 
$k_{\max}$, but this is a modest constant, independent of the number of iterations. 
In Section \ref{sec:NumericalExperiments}, we demonstrate that $k_{\max}$ can
indeed be quite small with LM-MM-GKS still computing high quality 
reconstructions, comparable in quality to those computed by MM-GKS without constraints on the solution space dimension.
The second step in the initialization of $\bV_{k_{\min}}$ creates some modest overhead for LM-MM-GKS, but leads to faster convergence. 
Next, we outline the main computational costs of LM-MM-GKS,
considering only $O(n)$ or $O(r)$
components.   
We start with the input for 
Algorithm \ref{Alg: Enlarge} (Enlarge) and the for-loop, which repeats 
$s = k_{\rm max} - k_{\rm min}$ times (or until convergence).
After $s$ iterations 
the algorithm has incurred the following major costs: 

\medskip
\noindent{\underline{Matrix-vector products}}:  
\begin{itemize} 
   \item $k_{\max}$ matrix-vector products with $\bA$.
   \item $k_{\max}$ matrix-vector products with $\boldsymbol{\Psi}$ and with $\bP_{\epsilon}^{(k)}$ (diagonal matrix).
\end{itemize}
\medskip
\noindent{\underline{QR decompositions and Orthogonalizations}}:  
\begin{itemize}
  \item 1 QR decomp. of $\bA\bV_{k_{\max}} = \bQ_{\bA}\bR_{\bA}$.

  \item 
  $s$ QR decomp. of 
  $\bP_{\epsilon}^{(k)} \Psi\bV_{k_{\min}+k} = \bQ_\Psi\bR_\Psi$ for 
  $k = 0, \ldots, s-1$.

  \item $s$ orthogonalizations of $\br^{(k+1)}$ to $\bV_{k_{\rm min}+k}$
  of average size $k_{\min}+s/2$.
\end{itemize}
\medskip
Algorithm \ref{Alg: Compress} has the following major costs:
\medskip
\begin{itemize}
  \item Compute the product $\bV_{k_{\rm max}} \bW$. 
  \item Orthogonalize the current solution and residual/gradient against $\widetilde{\bV}$.  
\end{itemize}
\footnotetext{typically $k_{\rm out} = k_{\rm max}$}

\section{LM-MM-GKS for Streaming Data (s-LM-MM-GKS)}
\label{sec: streaming}
In addition to the linear forward model 
in~\eqref{eq:LinSys}, we assume that 
one of the following conditions
holds: 
\begin{itemize}
  \item 
  Only a portion of the data is available at any given time (the data is streamed), 
  \item The problem is massive, so, at any time, we can only handle a subset of rows. 
\end{itemize}
If one of these conditions holds, we refer to the problem as the {\it streaming data case.}

Ideally, we would like to solve 
simultaneously the whole problem,
\begin{equation}
    \label{eq:rtomoproblemall}
    \min_\bx \left\|{\begin{bmatrix}\bA_1\\ \vdots \\ \bA_{n_t}\end{bmatrix} \bx - \begin{bmatrix} \bd_1 \\ \vdots \\ \bd_{n_t} \end{bmatrix}}\right\|_2^2 + \lambda \|\Psi{\bx}\|_1^1 .  
\end{equation}
However, in the streaming data case, 
this is not feasible. 
To adapt LM-MM-GKS for the streaming data case, we partition $\bA$ and $\bd$ and 
solve the $n_t$ 
subproblems 
\begin{align}
 \label{eq:rtomo1}
  \min_{\bx} \|\bA_1 \bx - \bd_1\|_2^2 & +  \lambda_1 \|\Psi\bx\|_1^1  , \\
  \label{eq:rtomoi}
\min_{\bx} \|\bA_2 \bx - \bd_2\|_2^2 & +  \lambda_2 \|\Psi\bx\|_1^1 , \\ 
 & \vdots & \nonumber \\
\label{eq:rtomont}
\min_{\bx} \|\bA_{n_t} \bx - \bd_{n_t}\|_2^2 & + \lambda_{n_t} \|\Psi\bx\|_1^1 , 
\end{align}
in succession, while recycling selected
data from one subproblem to the next,
in such a way that $\bx_{(n_t)} \approx \bx,$ where $\bx$ refers to the solution of 
(\ref{eq:rtomoproblemall}). This is similar to the approach in \cite{jiang2021hybrid} for simpler, linear, problems.
We solve \eqref{eq:rtomo1} with LM-MM-GKS and obtain a solution subspace 
$\bV^{(1)}_{k_{\rm max}}$ of (typically) dimension $k_{\max}$ 
and an approximate solution $\bx_{(1)}$. 
For this first system, $\bV^{(0)}_{k_{\rm min}}$ (in line 2 of Algorithm \ref{Alg: sRMMGKS}) is typically not given, but computed as in line 23 of Algorithm \ref{Alg: RMMGKS}. 
To avoid keeping redundant basis information and/or because of storage
limitations, we compress 
$\bV^{(1)}_{k_{\rm max}}$ 
using Algorithm \ref{Alg: Compress} 
to obtain 
$\bV^{(1)}_{k_{\rm min}}$, 
ensuring that, 
similar to the non-streamed case, 
$\bx_{(1)} \in {\rm range}
(\bV^{(1)}_{k_{\rm min}})$.
Next, we use LM-MM-GKS to solve the second 
system with $\bx_{(1)}$ and $\bV_{k_{\rm min}}^{(1)}$ as initial approximate solution and solution subspace, respectively.
As the matrix changes, the gradient needs to be computed separately.
The process is repeated for the remaining systems. 
Algorithm~\ref{Alg: sRMMGKS} 
summarizes the solution of 
streaming problems. We illustrate the performance of s-LM-MM-GKS in the numerical examples section.

\begin{algorithm}[!t]
\caption{s-LM-MM-GKS}
\label{Alg: sRMMGKS}
\begin{algorithmic}[1]
\Function{$\bx = $ s-LM-MM-GKS}{${\bA_1}, \cdots, \bA_{n_t}, \Psi, \bd_1, \cdots, \bd_{n_t}, \bx^{(0)}, \bepsilon, k_{\rm min}, k_{\rm max}, tol_1$}
\State $[\bx_{(1)}, \bV^{(1)}_{k_{\rm min}}] =$ LM-MM-GKS($\bA_1, \bPsi, \bV^{(0)}_{k_{\rm min}}, \bd_1, \bx^{(0)}, \epsilon, k_{\rm min}, k_{\rm max}, tol_1, tol_2$)
  \For{$j=2$ to $n_t$}
    \State $[\bx_{(j)}, \bV^{(j)}_{k_{\rm min}}] =$ LM-MM-GKS($\bA_{j}, \bPsi, \bV^{(j-1)}_{k_{\rm min}}, \bd_{j}, \bx_{(j-1)}, \epsilon,  k_{\rm min}, k_{\rm max}, tol_1, tol_2$) 
  \EndFor
  \State $\bx = \bx_{(n_t)}$
\EndFunction
\end{algorithmic}
\end{algorithm}

\section{Compression approaches}\label{sec: compression}
We describe four strategies for computing the matrix $\bW$
in Algorithm \ref{Alg: Compress},
derived
from \cite{jiang2021hybrid},
that are appropriate for solving regularized projected ill-posed problems.
The first two strategies, truncated singular value decomposition (tSVD) and reduced basis decomposition (RBD), use 
the matrix 
\begin{equation}\label{eq: SVDStacked}
\bar{\bH}_{k_{\rm max}} = \begin{bmatrix} \bR_{\bA} \\ \sqrt{\lambda_{\rm curr}} \bR_{\Psi} \end{bmatrix},
\end{equation}
where $\lambda_{\rm curr}$ denotes the current regularization parameter.
The other two strategies,
solution-oriented compression and sparsity-enforcing compression,
are based on regularized solutions to the (small) projected problem. 
We give a numerical comparison of the compression approaches in section
\ref{Sec: ImgDebl}. 
\subsection{Truncated SVD (tSVD)}
We compute the tSVD of 
$\bar{\bH}_{k_{\rm max}} \approx \bU \bS \bW^{T}$, retaining the largest $k_{\rm min}-2$ singular values. 
Hence, $\bW \in \R^{k_{\rm max}\times (k_{\rm min}-2)}$ consists of
the corresponding right singular vectors.

\subsection{Reduced basis decomposition (RBD)}
The reduced basis decomposition 
\cite[Algorithm 1]{chen2015reduced} to obtain a compressed representation of a data matrix
uses a greedy strategy to determine an approximate factorization 
\begin{equation}\label{eq: RBD}
\bar{\bH}_{k_{\rm max}}^T \approx \bW_{j}  \bT_{j},
\end{equation}
where $\bW_{j} \in \R^{k_{\rm max}\times j}$ has orthonormal columns and $\bT \in \R^{j \times 2k_{\rm max}}$ is the transformation matrix. The RBD algorithm randomly selects a column of $\bar{\bH}_{k_{\rm max}}^T$, and sets $\bW_1$ to be the normalized column. Then $\bT_1$ is chosen to minimize the residual of \eqref{eq: RBD}. The maximum norm column of the residual matrix is selected as the next column of $\bW$, and so on. The RBD algorithm ends if a specified residual tolerance or
the maximum number of columns, here $k_{\min}-2$,
is reached. If the algorithm ends with 
fewer columns than $k_{\min}-2$, more iterations are allowed in the subsequent
Enlarge or LM-MM-GKS call. The final $\bW_j$ is returned as the selected $\bW$.

\subsection{Solution-oriented compression (SOC)}
In this method, $\bW$ is determined
by the solution of the regularized projected problem \eqref{eq: minKryov2}.  

Let $\bz = [z_1, z_2, \dots, z_{k_{\rm max}}]^T$ be the solution of \eqref{eq: minKryov2}. 
Define the index sets $I$ and $J$ for a given tolerance $tol$:
\begin{eqnarray}
\label{eq: Im}
I & = & \{i: |z_i|>tol\} \\
\label{eq: Jm}
J & = & \{i: |z_i| \text{ is one of the largest } k_{\min}-1 \text{ components}\}
\end{eqnarray}
Choose the index set $K = 
\left(I \cap J\right)$, and let $\bW = {\bf I}_{:,K}$ (with $\bf I$ the identity matrix). 
\subsection{Sparsity-enforcing compression (SEC)}
This method differs from the previous one only in the way the projected problem is regularized.  We solve
\begin{equation}\label{eq: Sparsity}
\bz^{**} = \arg\min_{\bz \in \R^{k_{\rm max}}}\|\bR_{\bA}\bz - \bQ_{\bA}^T \bd\|^2_2 + \rho\|\bR_{\Psi}\bz\|^1_1.
\end{equation}
which enforces sparsity of 
the solution $\bz^{**}$, with $\rho$ denoting a regularization parameter that can be computed using the GCV, but typically $\rho = \lambda_{\textrm{current}}$.  We solve this small regularized problem by a Majorization-Minimization algorithm \cite{lange2016mm}.
The solution $\bz^{**}$ is used only inside Algorithm \ref{Alg: Compress} to select the index sets in (\ref{eq: Im}) and (\ref{eq: Jm}).

\section{Convergence Proof}
\label{sec:Convergence}
For the convergence proof, 
we assume that $\lambda$ is 
known and fixed, as has been done
in previous papers
\cite{huang2017majorization, buccini2023limited}.
We prove the convergence of our algorithm
(for fixed $\lambda$) to 
(1) the minimizer of
$\calJ_{\epsilon,\lambda}$, cf. \eqref{eq:Je}, or more generally to the minimum of a convex functional, and (2) 
a stationary point of
a non-convex functional.
This is a substantial improvement
on previous proofs, which
either require (eventually) minimization of the tangent quadratic functional $Q(\bx,\bx^{(k)})$
over the full space $\mathbb{R}^n$ 
\cite{huang2017majorization}, which makes the proof unsuitable for limited memory implementations,
or, using only monotonicity,
prove convergence, but 
generally not to the 
minimum or stationary point of the
smoothed functional 
$\calJ_{\epsilon,\lambda}$  \cite{buccini2023limited}.

For each iteration $k$, we define 
\eqs\label{eq:Qk}
\bQ_k & = & 
\bA^T\bA + \lambda \bPsi^T(\bP_{\epsilon}^{(k)})^2 \bPsi ,
\eqe
$\bb = \bb^{(k)} := \bA^T \bd$,
which is a constant vector, and 
\eqs\label{eq:Qtxk}
  \widetilde{\cal Q}(\bx,\bx^{(k)}) & = & \frac{1}{2}\bx^T \bQ_k \bx - \bx^T\bb  
  \quad = 
  {\cal Q}(\bx,\bx^{(k)}) - c_k ,
\eqe
with ${\cal Q}(\bx,\bx^{(k)})$, a quadratic tangent majorant to 
${\cal J}_{\epsilon,\lambda}(\bx)$, defined in 
\eqref{eq: QuadraticMajorantQ}.
With these definitions, 
$\nabla_\bx {\cal Q}(\bx, \bx^{(k)}) = \nabla_\bx \widetilde{\cal Q}(\bx, \bx^{(k)})$ and 
${\cal Q}(\bx, \bx^{(k)}) - {\cal Q}(\by, \bx^{(k)}) = 
\widetilde{\cal Q}(\bx, \bx^{(k)}) - \widetilde{\cal Q}(\by, \bx^{(k)})$ for all $\bx, \by \in \R^{n}$. 
So, ${\cal Q}(\bx, \bx^{(k)})$ and 
$\widetilde{\calQ}(\bx, \bx^{(k)})$ have the same minimizer, and the improvement for an update $\bx^{(k+1)} - \bx^{(k)}$ is the same for both quadratic functions. Furthermore, it follows from Algorithms 
\ref{Alg: RMMGKS} (step 24), 
\ref{Alg: Enlarge} (step 11 -- 14), and \ref{Alg: Compress} (step 4)
that at each iteration 
of the for-loop in 
Algorithm \ref{Alg: Enlarge}, a new approximate solution is computed by minimizing over a search space that contains both the current best approximation and the corresponding gradient of the new tangent majorant. 

Now, consider a generic step of the algorithm when the search space is extended. 
Given $\bx_{k-1}$, a 
current search space $\bV_{m-1}$, and $\bP_{\epsilon}^{(k-1)}$ (derived from $\bx^{(k-1)}$
following \eqref{eq:uk}--\eqref{eq: weightingMatrix}), let
\begin{eqnarray}
\label{eq:xk}
  \bx^{(k)} & = & \arg \min_{\bx \, \in \, \textrm{range}(\bV_{m-1})} 
  \frac{1}{2}\| \bA\bx - \bd\|_2^2 + \frac{\lambda}{2} \|\bP_{\epsilon}^{(k-1)} \boldsymbol{\Psi} \bx\|_2^2 
\end{eqnarray}
(minimizing
$\widetilde{\calQ}(\bx,\bx^{(k-1)}) =
\frac{1}{2}\bx^T \bQ_{k-1} \bx - \bx^T\bb$), compute $\bP_{\epsilon}^{(k)}$
from $\bx^{(k)}$, and set 
\eqs
  \br^{(k)} & = & \nabla_{\bx}
  \widetilde{\calQ}(\bx^{(k)},\bx^{(k)}) = \bQ_k \bx^{(k)} - \bb .
\eqe
If $\br^{(k)} \neq 0$, then $\bx^{(k)}$ is not the minimizer of $\widetilde{\calQ}(\bx,\bx^{(k)})$, and the 
negative gradient, $-\br^{(k)}$, is a descent direction. The optimal step in this direction 
gives a reduction in $\widetilde{\calQ}(\bx,\bx^{(k)})$ that is at least proportional to 
$\| \nabla_\bx \widetilde{\calQ}(\bx^{(k)},\bx^{(k)})\|_2^2$ (see below).
Let $\bV_m$ be defined following steps 11--14 in Algorithm \ref{Alg: Enlarge}.
Then $\textrm{range}\;\bV_m = \textrm{range}[\bV_{m-1} \; \br^{(k)}]$ and
$\bx^{(k)} - \alpha \br^{(k)}
\in \textrm{range}\;\bV_m$,  so the minimizer for
$\widetilde{\calQ}(\bx,\bx^{(k)})$
over $\textrm{range}\;\bV_m$
gives a reduction in
$\widetilde{\calQ}(\bx,\bx^{(k)})$ that is at least as large
as that for $\bx^{(k)} - \alpha \br^{(k)}$. 

\begin{lemma}
\label{lem:min_progress}
Let $\calQ$, $\widetilde{\calQ}$, $\bQ_k$, $\bx^{(k)}$, and $\bb$ be defined as above, and let $\bar{\mu} = \sup_{k = 0, 1, \ldots}
\| \bQ_k \|_2$ exist and be finite. 
Furthermore, let
\eqsn
  \bx^{(k+1)} & = & 
  \arg \min_{\bx \,\in\, {\rm range}(\bV_{m} )}
  \widetilde{\calQ}(\bx,\bx^{(k)}).
\eqen
Then 
\eqsn  
  \calJ_{\epsilon,\lambda}(\bx^{(k+1)}) \leq
  \calQ(\bx^{(k+1)},\bx^{(k)}) & \leq & 
  \calQ(\bx^{(k)},\bx^{(k)}) - 
  \frac{\| \br^{(k)} \|_2^2}{2\bar{\mu}}
  = \calJ_{\epsilon,\lambda}(\bx_k) - \frac{\| \nabla_{\bx} \calJ_{\epsilon,\lambda}(\bx_k) \|_2^2}{2\bar{\mu}} .
\eqen
\end{lemma}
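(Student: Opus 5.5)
The plan is to chain three facts: the majorant property of Definition~\ref{def: 1}, optimality of $\bx^{(k+1)}$ over ${\rm range}(\bV_m)$, and an explicit steepest-descent step inside that range.

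\emph{Outer relations.} The first inequality, $\calJ_{\epsilon,\lambda}(\bx^{(k+1)}) \leq \calQ(\bx^{(k+1)},\bx^{(k)})$, is property~4 of Definition~\ref{def: 1} at $\bx=\bx^{(k+1)}$, $\bv=\bx^{(k)}$. The final equality uses property~2, $\calQ(\bx^{(k)},\bx^{(k)})=\calJ_{\epsilon,\lambda}(\bx^{(k)})$, together with property~3 and the identification $\br^{(k)} = \nabla_\bx\widetilde\calQ(\bx^{(k)},\bx^{(k)}) = \nabla_\bx\calQ(\bx^{(k)},\bx^{(k)})$. Together these give $\nabla_\bx\calJ_{\epsilon,\lambda}(\bx^{(k)}) = \br^{(k)}$, so the two right-hand sides agree.

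\emph{Middle inequality.} Since $\calQ$ and $\widetilde\calQ$ differ by the constant $c_k$, it suffices to prove
\[
\widetilde\calQ(\bx^{(k+1)},\bx^{(k)}) \leq \widetilde\calQ(\bx^{(k)},\bx^{(k)}) - \|\br^{(k)}\|_2^2/(2\bar\mu).
\]
By construction (steps 11--14 of Algorithm~\ref{Alg: Enlarge}), ${\rm range}(\bV_m)={\rm range}[\bV_{m-1}\;\br^{(k)}]$ and $\bx^{(k)}\in{\rm range}(\bV_{m-1})$. Hence $\bx^{(k)}-\alpha\br^{(k)}\in{\rm range}(\bV_m)$ for every $\alpha\in\R$, and the minimizer $\bx^{(k+1)}$ over that range does at least as well as any such point. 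Because $\widetilde\calQ(\cdot,\bx^{(k)})$ is exactly quadratic with Hessian $\bQ_k$ and gradient $\br^{(k)}$ at $\bx^{(k)}$, expanding gives
\[
\widetilde\calQ(\bx^{(k)}-\alpha\br^{(k)},\bx^{(k)}) = \widetilde\calQ(\bx^{(k)},\bx^{(k)}) - \alpha\|\br^{(k)}\|_2^2 + \tfrac{\alpha^2}{2}(\br^{(k)})^T\bQ_k\br^{(k)}.
\]
Bound $(\br^{(k)})^T\bQ_k\br^{(k)}\leq \|\bQ_k\|_2\|\br^{(k)}\|_2^2\leq\bar\mu\|\br^{(k)}\|_2^2$ and choose $\alpha=1/\bar\mu$. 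The quadratic then drops by at least $\|\br^{(k)}\|_2^2/(2\bar\mu)$. If $\br^{(k)}=0$ the claim is trivial.

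\emph{Where care is needed.} The main point is ensuring that $\br^{(k)}$, the gradient of the \emph{updated} majorant $\widetilde\calQ(\cdot,\bx^{(k)})$, is the vector actually added to the space. Note that $\bx^{(k+1)}$ minimizes $\widetilde\calQ(\cdot,\bx^{(k)})$, not the old majorant. This is the modification of Section~\ref{ssec:modMMGKS}, and it is where the argument would fail for standard MM-GKS. After compression, Algorithm~\ref{Alg: Compress} (step 4) explicitly puts $\bx$ and $\br$ back into the basis, so the same containment holds there. Finally, $\bar\mu>0$ is needed for the step length; it holds whenever some $\bQ_k\neq 0$, which is guaranteed by \eqref{eq: NANL}.
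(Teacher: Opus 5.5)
Your proof is correct and follows the paper's argument. You bound the decrease of $\widetilde{\calQ}(\cdot,\bx^{(k)})$ along $\bx^{(k)}-\alpha\br^{(k)}\in{\rm range}(\bV_m)$ using $(\br^{(k)})^T\bQ_k\br^{(k)}\le\bar\mu\|\br^{(k)}\|_2^2$, then invoke minimality of $\bx^{(k+1)}$ and the majorant and tangency properties for the outer relations; the only difference is that you take the fixed step $\alpha=1/\bar\mu$ instead of the paper's exact line-search step $\alpha=\|\br^{(k)}\|_2^2/((\br^{(k)})^T\bQ_k\br^{(k)})$, which gives the same bound without dividing by $(\br^{(k)})^T\bQ_k\br^{(k)}$.
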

\begin{proof}

Let $\alpha$ be the minimizer of 
$\widetilde{\calQ}(\bx^{(k)} - \widetilde{\alpha}\br^{(k)},\bx^{(k)})$ as a function of $\widetilde{\alpha}$, then
$\alpha = \frac{\|\br^{(k)}\|_2^2} {{\br^{(k)}}^T \bQ_k \br^{(k)}}$ (see, e.g., \cite[section 3.3]{NocWri2e}). Substituting this into \eqref{eq:Qtxk}
gives
\eqsn
  \widetilde{\calQ}(\bx^{(k)} - \alpha \br^{(k)},\bx^{(k)})
  & = & 
  \widetilde{\calQ}(\bx^{(k)},\bx^{(k)}) +  
  \frac{1}{2}
  \alpha^2 {\br^{(k)}}^T \bQ_k \br^{(k)}
  - \alpha \|\br^{(k)}\|_2^2 \\
  & = &
  \widetilde{\calQ}(\bx^{(k)},\bx^{(k)}) -  
  \frac{1}{2} 
  \frac{\|\br^{(k)}\|_2^4}{{\br^{(k)}}^T\bQ_k\br^{(k)}} .
\eqen
Since $\| \bQ_k \|_2 \leq \bar{\mu}$, 
we have $(\br^{(k)})^T\bQ_k\br^{(k)} \leq
\|\bQ_k\|_2 \|\br^{(k)} \|_2^2  
\leq \bar{\mu} \|\br^{(k)}\|_2^2$,
and, therefore,
$\widetilde{\calQ}(\bx^{(k)} - \alpha\br^{(k)},\bx^{(k)}) 
\leq 
\widetilde{\calQ}(\bx^{(k)},\bx^{(k)}) 
- \| \br^{(k)} \|_2^2 /(2\bar{\mu})$.
As $\bx^{(k)} - \alpha\br^{(k)} \in 
\textrm{range}\;\bV_m$, 
$\widetilde{\calQ}(\bx^{(k+1)},\bx^{(k)}) 
\leq
\widetilde{\calQ}(\bx^{(k)} - \alpha \br^{(k)},\bx^{(k)})
\leq 
\widetilde{\calQ}(\bx^{(k)},\bx^{(k)}) 
- \| \br^{(k)} \|_2^2 / ( 2\bar{\mu} )$.
This also implies
\[
\calQ(\bx^{(k+1)},\bx^{(k)}) 
\leq
\calQ(\bx^{(k)},\bx^{(k)}) 
- \| \br^{(k)} \|_2^2 / ( 2\bar{\mu} )
\]
The remaining (in)equalities follow
directly from the fact that 
$\calQ(\bx,\bx^{(k)})$ is a quadratic 
majorant for $\calJ_{\epsilon,\lambda}(\bx)$
and tangent to $\calJ_{\epsilon,\lambda}(\bx)$
at $\bx^{(k)}$. See equations \eqref{eq:uk}--\eqref{eq: QuadraticMajorantQ}.
\end{proof}

\begin{lemma}
Let $\bQ_k = \bA^T\bA + \lambda \bPsi^T(\bP_{\epsilon}^{(k)})^2
  \bPsi$, 
cf. \eqref{eq:Qk}, then
\eqsn
  \bar{\mu} \leq 
  \|\bA\|_2^2 + \frac{1}{\epsilon}\|\bPsi\|_2^2 .
\eqen
\end{lemma}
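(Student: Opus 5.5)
The plan is to bound $\|\bQ_k\|_2$ uniformly in $k$ by the triangle inequality and submultiplicativity, then take the supremum over $k$. Concretely, I would write
\[
\|\bQ_k\|_2 \leq \|\bA^T\bA\|_2 + \lambda\,\|\bPsi^T(\bP_{\epsilon}^{(k)})^2\bPsi\|_2
\leq \|\bA\|_2^2 + \lambda\,\|\bP_{\epsilon}^{(k)}\|_2^2\,\|\bPsi\|_2^2 .
\]
Here I use $\|\bA^T\bA\|_2=\|\bA\|_2^2$ and $\|\bPsi^T\bD\bPsi\|_2\le\|\bPsi^T\|_2\|\bD\|_2\|\bPsi\|_2$ with $\|\bPsi^T\|_2=\|\bPsi\|_2$.

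The key step is a bound on the diagonal weights that does not depend on the iterate. By \eqref{eq: weightslplq} and \eqref{eq: weightingMatrix}, $(\bP_{\epsilon}^{(k)})^2=\mathrm{diag}(\bw_\epsilon^{(k)})$ with entries $((u_j^{(k)})^2+\epsilon^2)^{-1/2}$. These entries are positive and at most $(\epsilon^2)^{-1/2}=1/\epsilon$. The spectral norm of a nonnegative diagonal matrix is its largest entry, so $\|\bP_\epsilon^{(k)}\|_2^2=\|(\bP_\epsilon^{(k)})^2\|_2\le 1/\epsilon$ for every $k$, whatever $\bx^{(k)}$ is. This is exactly why $\bar\mu$ exists and is finite, as Lemma~\ref{lem:min_progress} requires. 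It uses only $\epsilon>0$.

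Combining the two gives $\|\bQ_k\|_2\le \|\bA\|_2^2+\frac{\lambda}{\epsilon}\|\bPsi\|_2^2$ for all $k$, and taking the supremum over $k$ gives the same bound for $\bar\mu$.

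The only obstacle is a mismatch with the stated bound, which has no factor $\lambda$. This is a bookkeeping issue rather than a real difficulty. I would note that the stated bound holds when $\lambda\le 1$, or when $\lambda$ is absorbed into $\bPsi$ (equivalently, one reads $\|\sqrt{\lambda}\,\bPsi\|_2^2$ in place of $\|\bPsi\|_2^2$). I would write the proof with the explicit factor $\lambda/\epsilon$ and remark on this. Either way, the conclusion needed downstream, that $\bar\mu$ is finite and independent of $k$, is unaffected.
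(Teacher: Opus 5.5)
Your proof is correct and follows the paper's argument: the triangle inequality and submultiplicativity bound $\|\bQ_k\|_2$, and the diagonal weights $((u_j^{(k)})^2+\epsilon^2)^{-1/2}$ are at most $\epsilon^{-1}$ for every $k$. Your remark about $\lambda$ is also right and should be kept, since the paper's proof drops $\lambda$ in its first inequality without comment and the stated bound genuinely fails for large $\lambda$: at an iterate with $\bPsi\bx^{(k)}=0$ (e.g.\ $\bx^{(0)}=0$) one has $(\bP_\epsilon^{(k)})^2=\epsilon^{-1}\bI$, hence $\|\bQ_k\|_2\ge\frac{\lambda}{\epsilon}\|\bPsi\|_2^2$, so the correct bound is $\|\bA\|_2^2+\frac{\lambda}{\epsilon}\|\bPsi\|_2^2$, which still gives the finiteness of $\bar{\mu}$ needed in Lemma~\ref{lem:min_progress}.
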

\begin{proof}
For each $\bQ_k$, 
we have 
$\|\bQ_k\|_2 \leq
  \|\bA\|_2^2 +
  \|\bPsi\|_2^2 \;
  \|(\bP_{\epsilon}^{(k)})^2\|_2$, 
and the definition of 
$\bP_{\epsilon}^{(k)}$ in 
\eqref{eq: weightingMatrix}
and 
\eqref{eq: weightslplq}
give
$\|(\bP_{\epsilon}^{(k)})^2\|
\leq \epsilon^{-1}$,
which leads to the stated result.
\end{proof}

We can now give the proof that our algorithm converges to the minimum of $\calJ_{\epsilon,\lambda}(\bx)$ or to a stationary point for a non-convex functional.
\begin{thm}
Let 
$\calJ_{\epsilon,\lambda}(\bx)$ be defined as in \eqref{eq:Je} with $\phi_{\epsilon}(t)$ given by \eqref{eq:phi_eps}.
For $k = 1, 2, 3, \ldots$, let $\bx^{(k)}$, $\calQ(\bx,\bx^{(k)})$, and $\bx^{(k+1)}$ be defined as in 
\eqref{eq:xk},  
\eqref{eq: QuadraticMajorantQ}, and
Lemma \ref{lem:min_progress}.
Then
\eqsn
  \lim_{k \rightarrow \infty}
  \nabla_{\bx} \calJ_{\epsilon,\lambda}(\bx^{(k)}) = 0 .
\eqen
\end{thm}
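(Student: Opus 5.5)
The plan is a standard sufficient-decrease/telescoping argument built on Lemma \ref{lem:min_progress}. First I check the hypothesis $\bar{\mu} < \infty$. By the second lemma, $\bar{\mu} \le \|\bA\|_2^2 + \frac{1}{\epsilon}\|\bPsi\|_2^2$ (with the factor $\lambda$ absorbed or carried along; either way it is finite for fixed $\epsilon>0$ and $\lambda$). So Lemma \ref{lem:min_progress} applies at every step. For every $k$ it yields
\begin{equation*}
\calJ_{\epsilon,\lambda}(\bx^{(k+1)}) \le \calJ_{\epsilon,\lambda}(\bx^{(k)}) - \frac{1}{2\bar{\mu}}\|\nabla_{\bx}\calJ_{\epsilon,\lambda}(\bx^{(k)})\|_2^2 .
\end{equation*}
Here I use that $\br^{(k)} = \nabla_\bx \widetilde{\calQ}(\bx^{(k)},\bx^{(k)}) = \nabla_\bx \calJ_{\epsilon,\lambda}(\bx^{(k)})$ by the tangency condition in Definition \ref{def: 1}.

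Next I must justify that this inequality holds for \emph{every} iteration, including those immediately after a compression. This is where the algorithmic details matter. In the expansion phase, the new basis vector is $\br^{(k)}$ orthogonalized against the current basis, so $\textrm{range}(\bV_m)$ contains both $\bx^{(k)}$ and $\br^{(k)}$. After compression, Algorithm \ref{Alg: Compress} explicitly appends $\bx$ and $\br$ to $\widetilde{\bV}$, so the next search space again contains $\bx^{(k)}$ and $\br^{(k)}$. In both cases $\bx^{(k)}-\alpha\br^{(k)}$ lies in the search space, which is all the proof of Lemma \ref{lem:min_progress} uses. The constant $c_k$ plays no role, since only differences of $\calQ(\cdot,\bx^{(k)})$ enter.

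Then I telescope. Since $\phi_\epsilon \ge 0$, $\calJ_{\epsilon,\lambda}$ is bounded below by $0$. Summing the inequality from $k=1$ to $K$ gives
\begin{equation*}
\frac{1}{2\bar{\mu}}\sum_{k=1}^{K}\|\nabla_{\bx}\calJ_{\epsilon,\lambda}(\bx^{(k)})\|_2^2 \le \calJ_{\epsilon,\lambda}(\bx^{(1)}) - \calJ_{\epsilon,\lambda}(\bx^{(K+1)}) \le \calJ_{\epsilon,\lambda}(\bx^{(1)}) .
\end{equation*}
The series of squared gradient norms therefore converges, and its terms tend to zero, which is the claim. If some $\br^{(k)}=0$, the iteration is stationary from then on and the statement is trivial.

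The main obstacle is the bookkeeping in the second step, namely making rigorous that the search space always contains the current iterate and the gradient of the \emph{updated} majorant. This is exactly why the modified residual (computed with $\bP_\epsilon^{(k+1)}$ rather than $\bP_\epsilon^{(k)}$) is needed: with the old weights the appended vector is not $\nabla\calJ_{\epsilon,\lambda}(\bx^{(k)})$ and the descent bound fails. For the convex case I would add a remark. Here $\calJ_{\epsilon,\lambda}$ is strictly convex and coercive when $\mathcal{N}(\bA)\cap\mathcal{N}(\bPsi)=\{0\}$, so the iterates stay in a bounded sublevel set. Every limit point is then a stationary point, hence the unique minimizer, so the whole sequence converges to it.
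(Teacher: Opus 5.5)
Your proposal is correct and follows the paper's proof. Both arguments telescope the sufficient-decrease inequality of Lemma~\ref{lem:min_progress} against the lower bound $\calJ_{\epsilon,\lambda}\ge 0$, which gives summability of $\|\nabla_{\bx}\calJ_{\epsilon,\lambda}(\bx^{(k)})\|_2^2$; the paper phrases the final step as a contradiction with a subsequence bounded away from zero, which is the same thing. Your extra checks are sound and make explicit points the paper leaves to the surrounding text or omits:
\begin{itemize}
\item finiteness of $\bar{\mu}$, where you correctly carry the factor $\lambda$ that the paper's second lemma drops;
\item containment of $\bx^{(k)}$ and $\br^{(k)}$ in the search space after both expansion and compression;
\item the convex-case remark, which upgrades stationarity to convergence to the unique minimizer.
\end{itemize}
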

\begin{proof}
Since $\calJ_{\epsilon,\lambda}(\bx)$ is bounded from below ($\calJ_{\epsilon,\lambda}(\bx) \geq 0$), 
$\calJ_{\epsilon,\lambda}^{*} = \inf_{\bx} \calJ_{\epsilon,\lambda}(\bx) \geq 0$
exists, and using Lemma 
\ref{lem:min_progress}
we have, for each $k$,
\eqsn
\calJ_{\epsilon,\lambda}^{*} - \calJ_{\epsilon,\lambda}(\bx_0) & \leq &
\calJ_{\epsilon,\lambda}(\bx^{(k+1)}) - \calJ_{\epsilon,\lambda}(\bx^{(0)}) 
\leq
\calJ_{\epsilon,\lambda}(\bx^{(k)}) - 
\frac{\| \nabla_{\bx} \calJ_{\epsilon,\lambda}(\bx^{(k)}) \|_2^2}{2\bar{\mu}}
- \calJ_{\epsilon,\lambda}(\bx^{(0)}) \\
& \leq &
\calJ_{\epsilon,\lambda}(\bx^{(k-1)}) - 
\frac{\| \nabla_{\bx} \calJ_{\epsilon,\lambda}(\bx^{(k-1)}) \|_2^2}{2\bar{\mu}} -
\frac{\| \nabla_{\bx} \calJ_{\epsilon,\lambda}(\bx^{(k)}) \|_2^2}{2\bar{\mu}}
- \calJ_{\epsilon,\lambda}(\bx^{(0)}) \\
& \leq & \cdots \leq
- \frac{1}{2\bar{\mu}}
\left( \sum_{j=0}^{k}
\| \nabla_{\bx} \calJ_{\epsilon,\lambda}(\bx^{(j)}) \|_2^2 \right) .
\eqen
If some $\eta > 0$ exists such that there is a (infinite) subsequence 
$\| \nabla_{\bx} \calJ_{\epsilon,\lambda}(\bx^{(j_i)}) \|_2^2 \geq \eta$
for all $j_i$ with $i = 1, 2, \ldots$, then 
\[
\lim_{k \rightarrow \infty}
- \frac{1}{2\bar{\mu}}
\left( \sum_{j=0}^{k}
\| \nabla_{\bx} \calJ_{\epsilon,\lambda}(\bx^{(j)}) \|_2^2 \right) = -\infty . 
\]
Hence, the inequality above shows that such an $\eta$ cannot exist, and therefore
\begin{equation} \nonumber
  \lim_{k \rightarrow \infty}
  \nabla_{\bx} \calJ_{\epsilon,\lambda}(\bx^{(k)}) = 0 .
\end{equation} \mbox{}
\end{proof}

\section{Numerical experiments} \label{sec:NumericalExperiments}

We illustrate the performance of LM-MM-GKS and s-LM-MM-GKS 
on applications in  
image deblurring, dynamic photoacoustic tomography, and computerized tomography, 
and we compare with the performance for existing methods. 
In every scenario, for each application, our approaches
are successful in providing high quality reconstructions with very limited memory requirements. 
Where 
we compare to full MM-GKS, s-LM-MM-GKS reconstructions are comparable in quality to the MM-GKS reconstructions but at greatly reduced memory requirements.  
Where we assume that the problem 
is too large to store all MM-GKS iterates, and therefore the quality of the MM-GKS reconstructions deteriorates, LM-MM-GKS, which can operate within the memory constraints, still provides high quality solutions. 

All computations were carried out in MATLAB\textsuperscript{\textregistered} R2023b with about 15 significant decimal digits running on a laptop computer with an 
Apple\textsuperscript{\textregistered} Core(TM)i7-8750H CPU @2.20GHz with 128GB of RAM.

For all testcases, we perturb the measurements with white Gaussian noise, i.e., the noise vector $\be$ has mean zero and a scaled identity covariance matrix; 
we refer to the ratio $\sigma=\|\be\|_{2}/\|\bA\bx\|_{2}$ as the noise level.  
For all MM-GKS and R-MMGKS variations in sections \ref{Sec: ImgDebl} and \ref{sec: CompTomo} we choose  \begin{equation}\label{eq: L}
    \Psi = \begin{bmatrix}
    \bI_{n_y} \otimes \bL_{x} \\ \bL_{y} \otimes \bI_{n_x}
\end{bmatrix},
\end{equation} where the matrix $\bL_x$ represents the discrete 
first derivative operator in the $x$-direction (vertical) and
$\bL_y$ in the $y$-direction (horizontal). 
Unless otherwise stated,
Algorithm ~\ref{Alg: Enlarge} (Enlarge) iterates until 
\begin{equation} \label{eq: TOL1} 
\frac{||\bx^{(k)} - \bx^{(k-1)}||_2}
{||\bx^{(k-1)}||_2} \leq
10^{-3} ,
\end{equation}
or the maximum number of iterations is reached, specified in each example. For all LM-MM-GKS experiments, the reported iteration counts refer to expansion steps of the Enlarge routine (Algorithm~\ref{Alg: Enlarge}). The initial approximation $\bx^{(1)}$ is computed from $\ell = 15$ GKB steps (line 10). The initial subspace $\bV_{k_{\min}}$ is then built from a $k_{\min}$-dimensional Krylov subspace associated with $\bA^T\bA + \lambda \bPsi^T \bP_{\epsilon}^2 \bPsi$, augmented with $\bx^{(1)}$ and $\br^{(1)}$ (lines 23-24); this one-time initialization cost is not included in the reported counts. For streaming experiments, subsequent subproblems reuse the recycled compressed subspace of dimension $k_{\min}$ directly. The values of $k_{\min}$ and $k_{\max}$ are specified in each experiment below.

To assess the reconstructed solutions, we consider the following quality 
measures. 
The Relative Reconstruction Error (RRE)
for reconstruction $\bx^{(k)}$ 
is defined as
\begin{equation} \label{eq: RRE} 
{\rm RRE}:={\rm RRE}(\bx^{(k)},\bx_{\mathrm{true}}) = \frac{||\bx^{(k)}-\bx_{\mathrm{true}}||_2}
{\|\bx_{\mathrm{true}}\|_2}. 
\end{equation}
For visual assessment, for all the numerical experiments reported below, we display error images $\bx_i^{(k)} - \bx_{\rm true}$
for each method $i$ in the inverted colormap. To enable direct comparison, all error images within each experiment are normalized by \begin{equation} e_{\max} = \max_{i}\|\bx_i^{(k)} - \bx_{\rm true}\|_{\infty},\end{equation} where the index $i$ runs over all methods compared in the experiment, and displayed on the unified scale $[-1, 0]$. Darker regions indicate larger reconstruction error. The Structural SIMilarity index (SSIM) measures 
how well the overall structure of a reference image is recovered by an approximation. The higher the index, the better the reconstruction; the highest achievable index is $1$.
The definition of the SSIM is involved, and we refer to \cite{SSIM} for details. 
Since this paper is concerned with reconstructing images that preserve edges in the medium of interest, we 
also use the recently proposed HaarPSI (HP) metric, which stands for Haar wavelet-based perceptual similarity index \cite{reisenhofer2018haar} with a maximum achievable value of 1. We also consider the PSNR that is a pixel-wise measure to quantify how close the reconstructed image is to the reference in terms of squared error \cite{huynh2008scope}.
\subsection{Image deblurring}
\label{Sec: ImgDebl}
\begin{figure}[ht!]
\begin{center}
\begin{minipage}{0.32\textwidth}
		\includegraphics[width=0.9\textwidth]{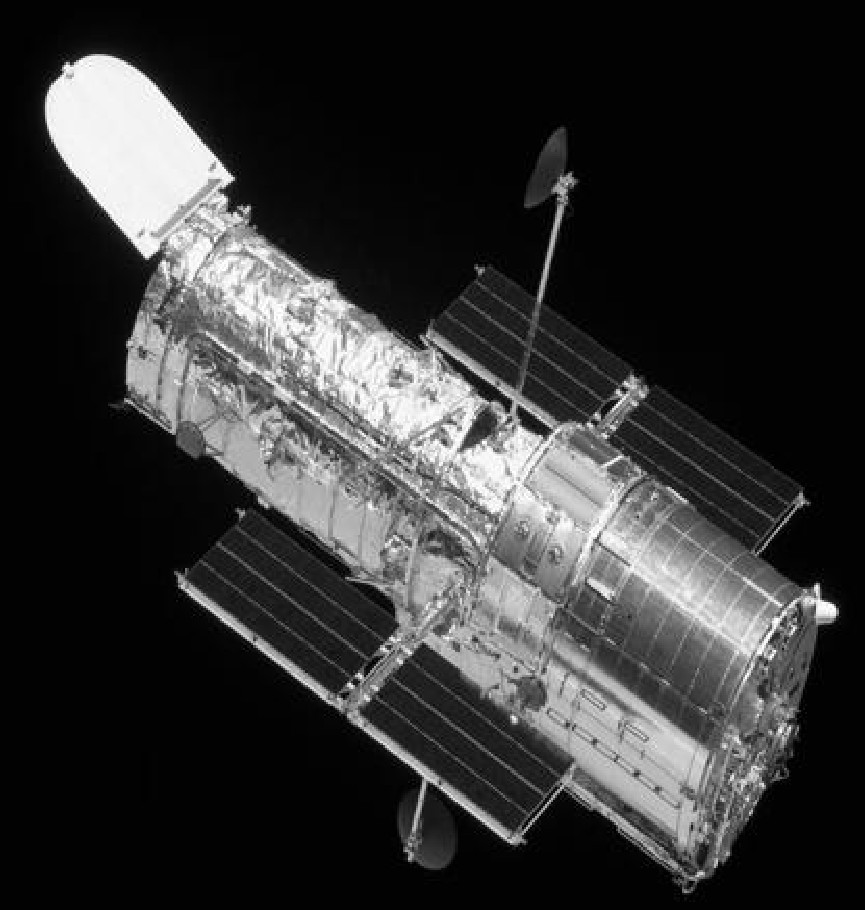}
	\end{minipage}
	\begin{minipage}{0.32\textwidth}
		\includegraphics[width=\textwidth]{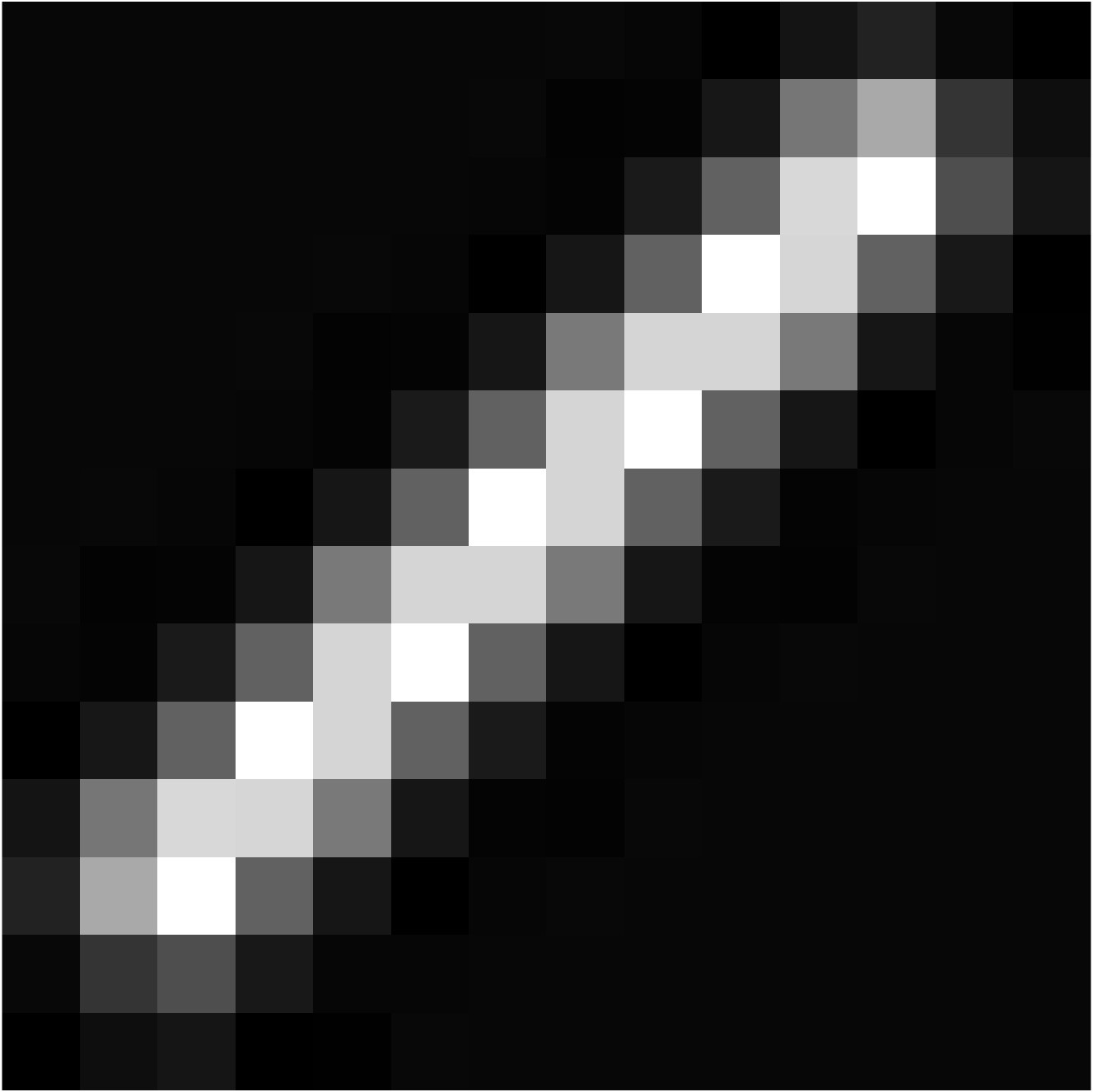}
	\end{minipage}
	\begin{minipage}{0.32\textwidth}
		\includegraphics[width=0.9\textwidth]{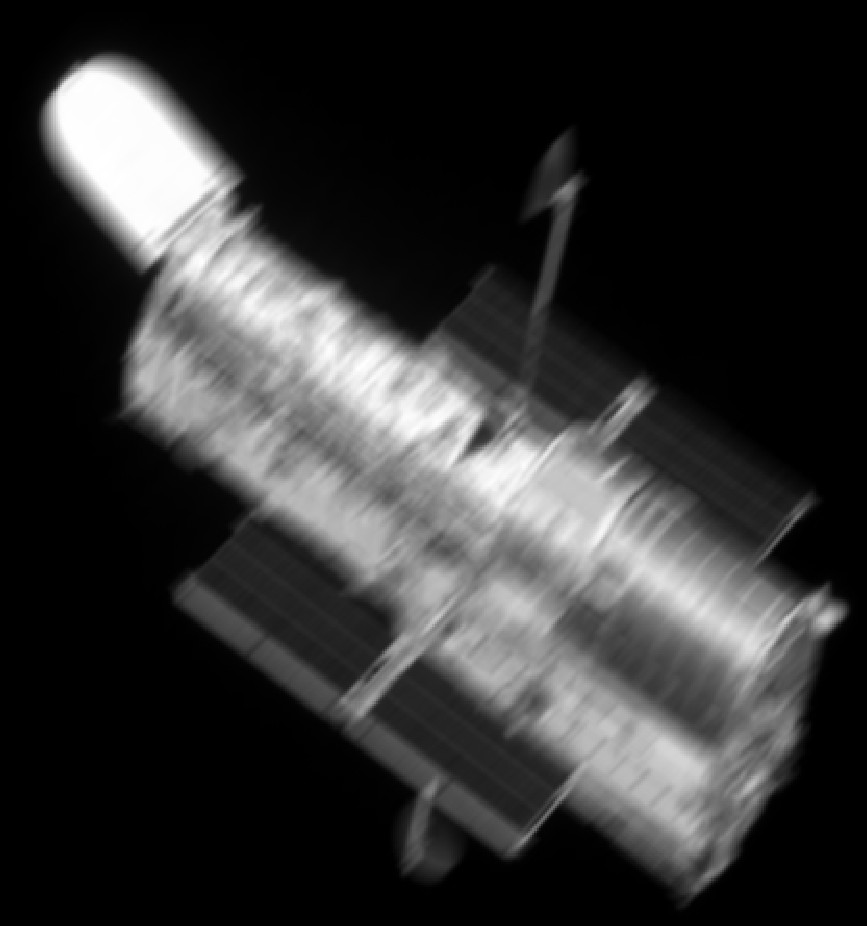}
	\end{minipage}
 \end{center}
\caption{Image deblurring. (a) True image of $500 \times 500$ pixels. (b) Motion blur PSF ($14 \times 14$ pixels), (c) Blurred and noisy image with $0.1\%$ Gaussian noise.}
\label{Fig: telescopeImages}
\end{figure}
\begin{figure}[ht!]
\centering
  \begin{tabular}{ccccc}
   MM-GKS (25) &  tSVD &  RBD & SEC & SOC\\
  \includegraphics[height = 0.13\textwidth, width = .15\textwidth]{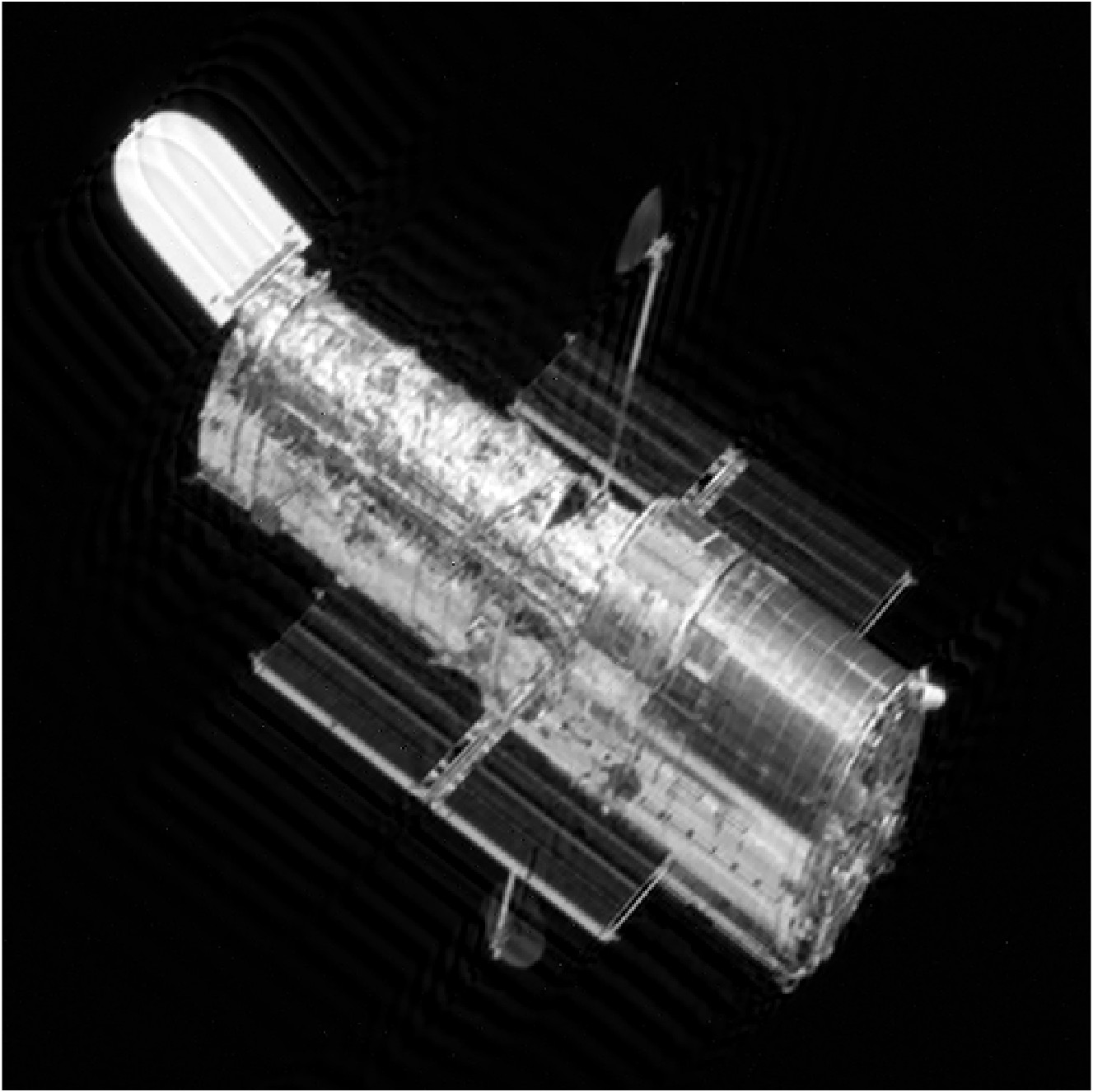} &
  \includegraphics[height = 0.13\textwidth, width = .15\textwidth]{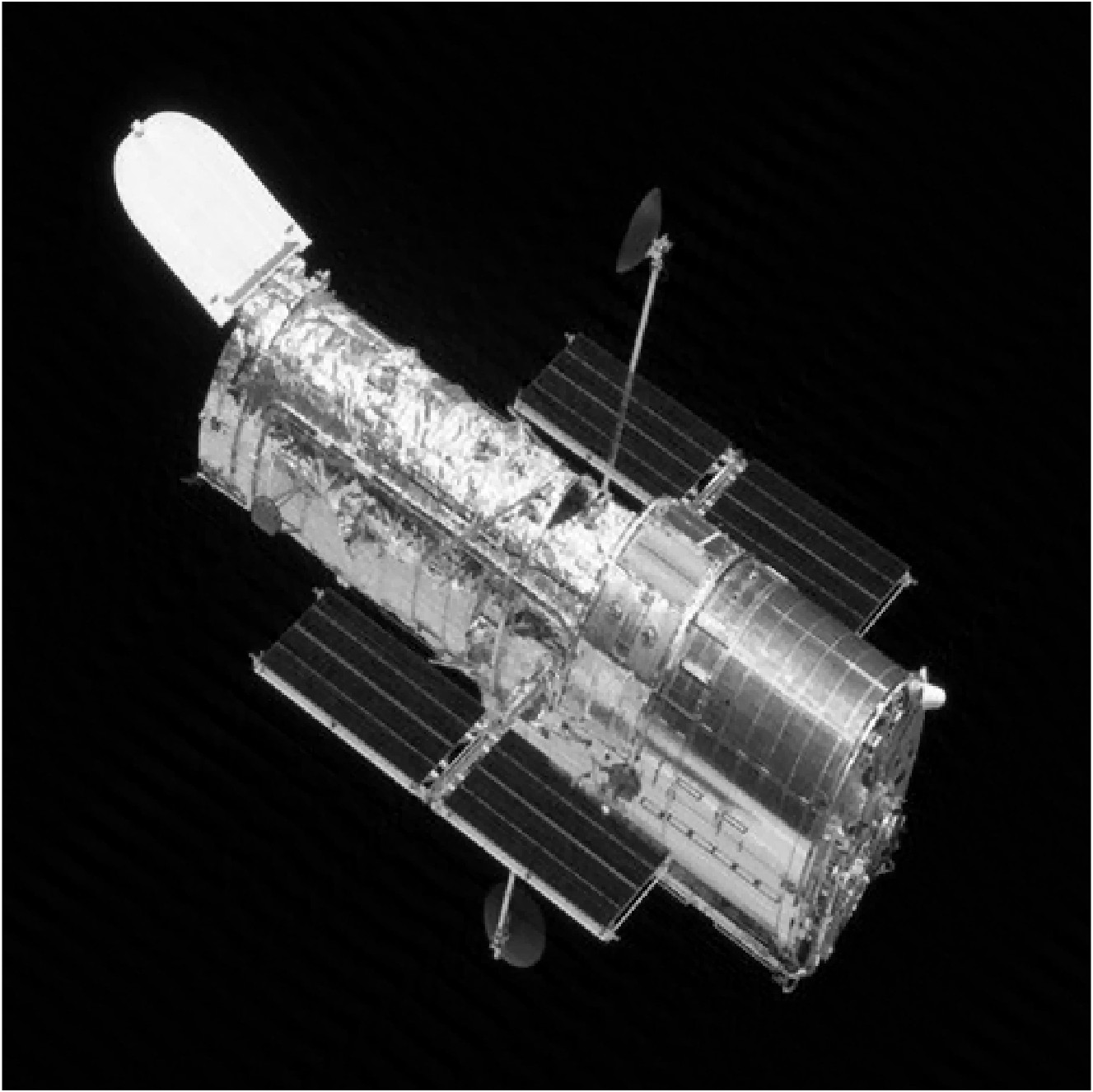} &
  \includegraphics[height = 0.13\textwidth, width = .15\textwidth]{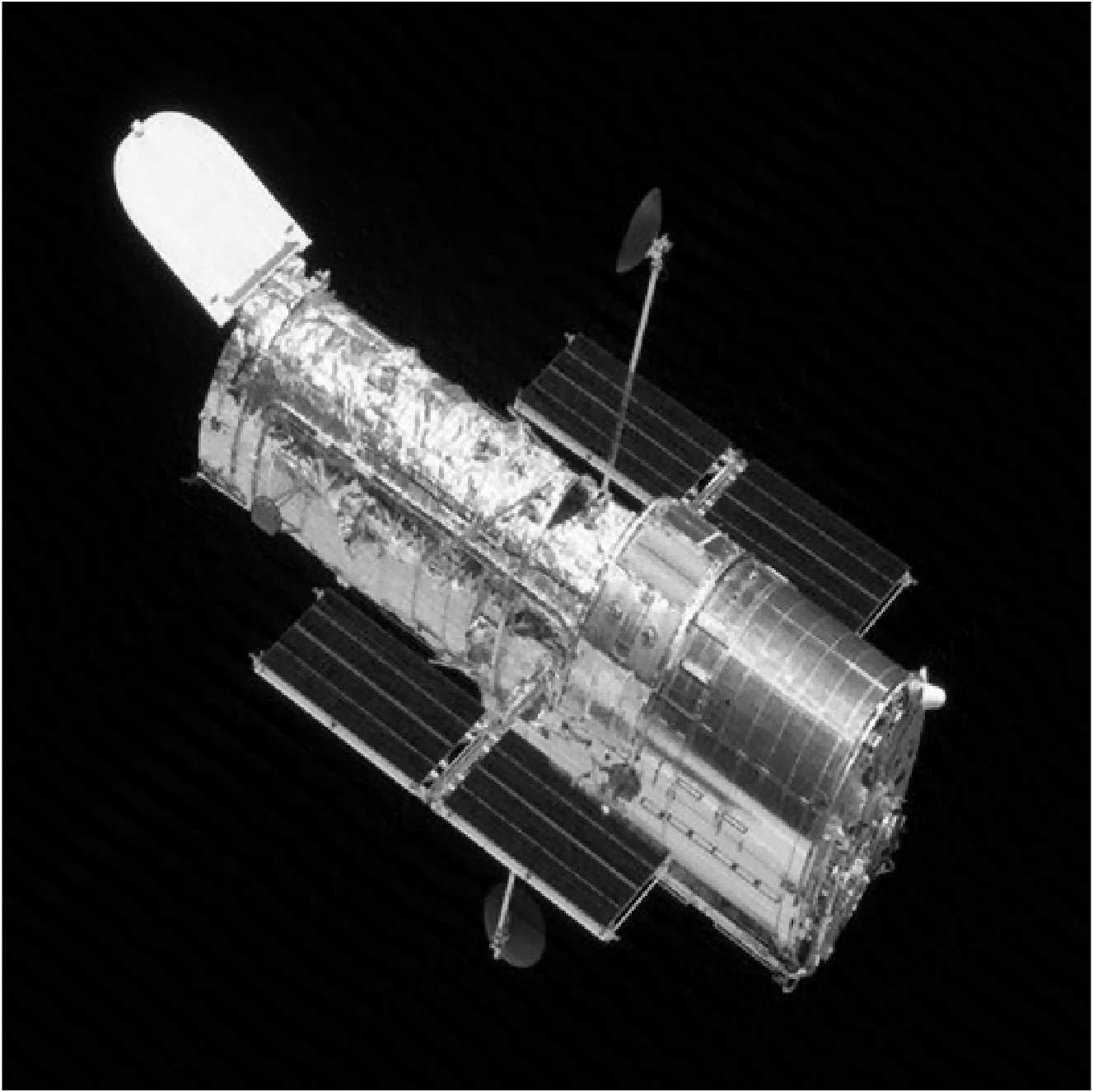}& 
    \includegraphics[height = 0.13\textwidth, width = .15\textwidth]{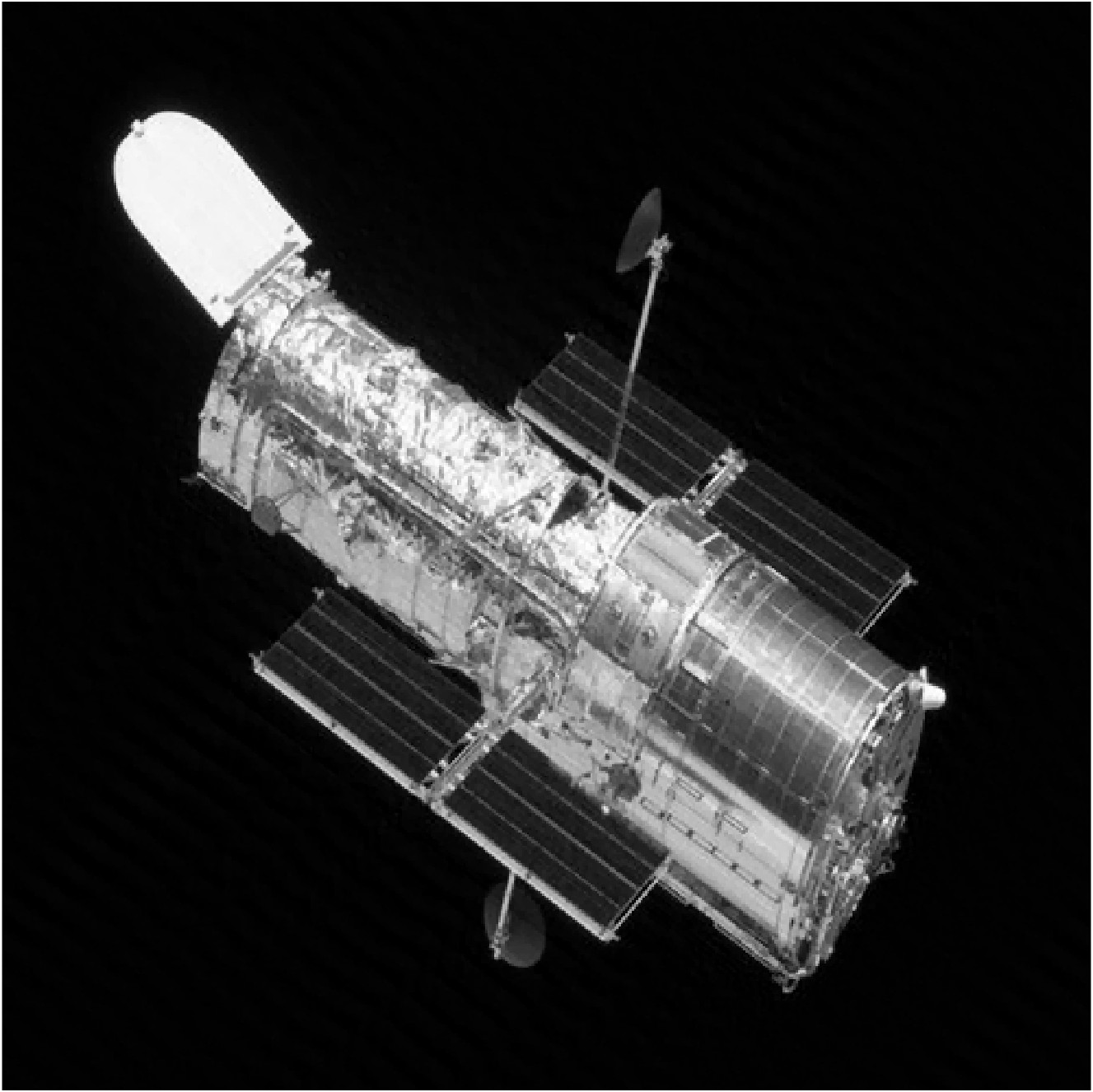}&
    \includegraphics[height = 0.13\textwidth, width = .15\textwidth]{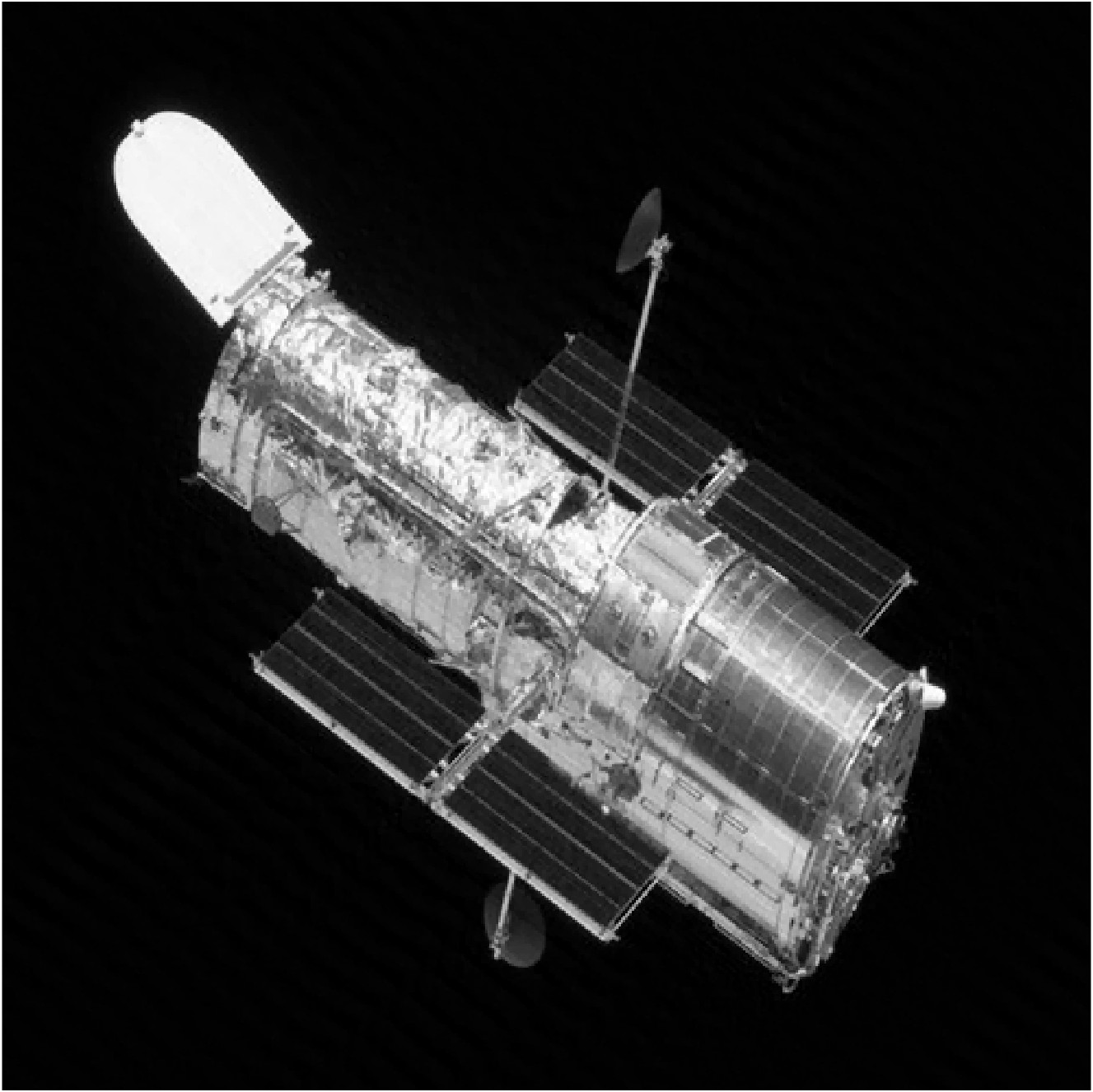} 
\end{tabular}
\begin{tabular}{ccccc}
  \includegraphics[height = 0.13\textwidth,  width = .15\textwidth]{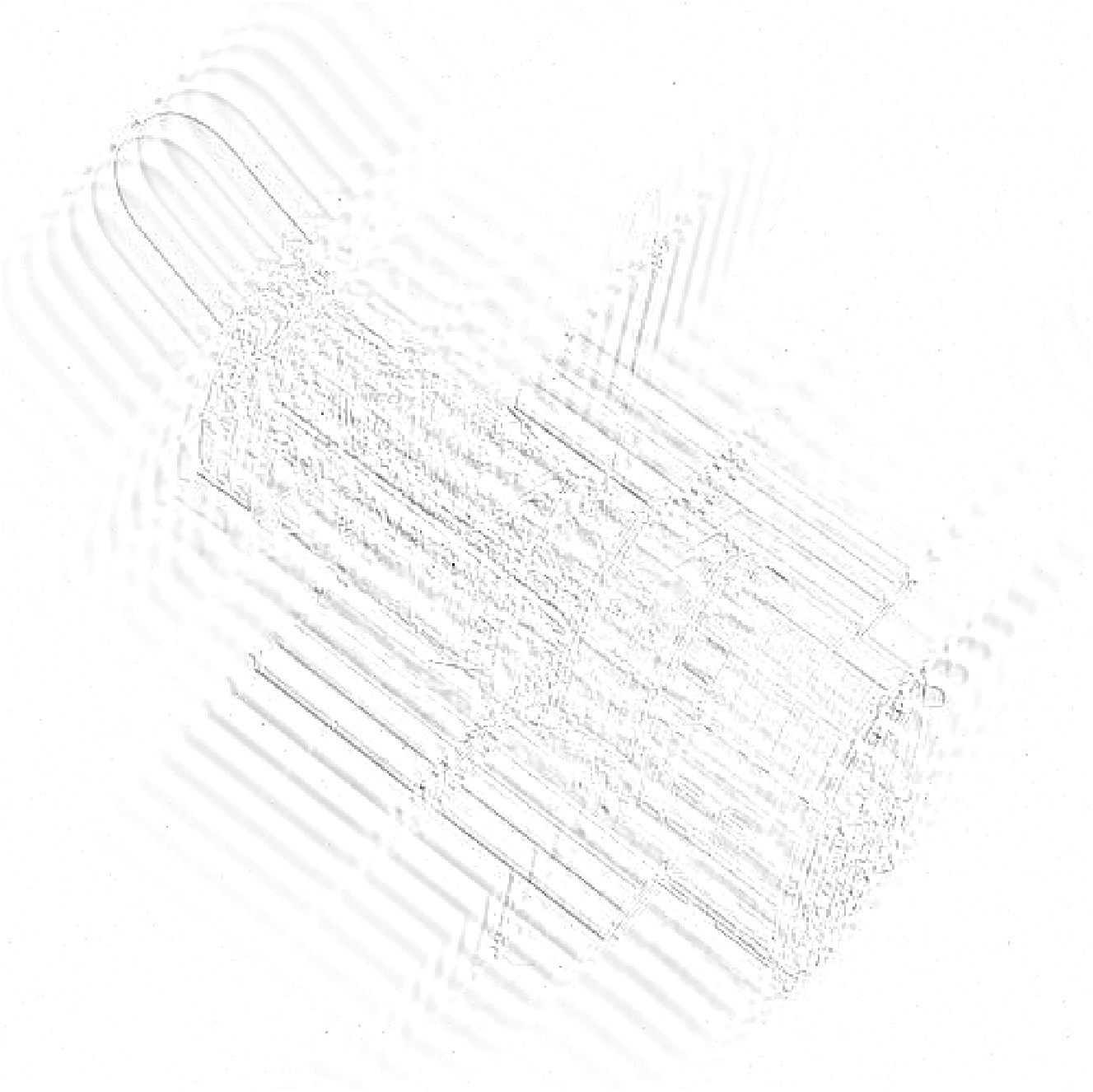}
    &
  \includegraphics[height = 0.13\textwidth,  width = .15\textwidth]{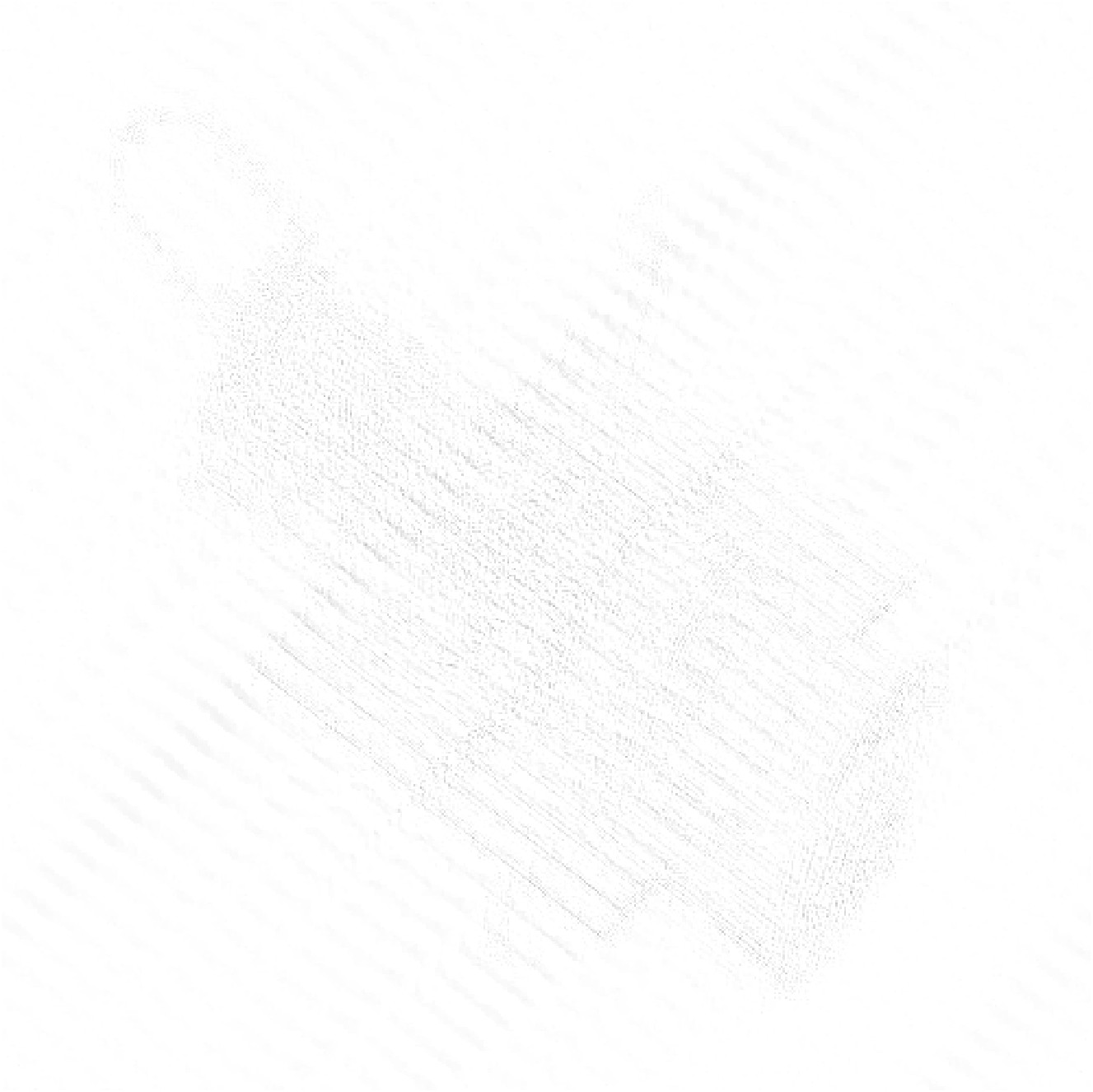}
    &
  \includegraphics[height = 0.13\textwidth, width = .15\textwidth]{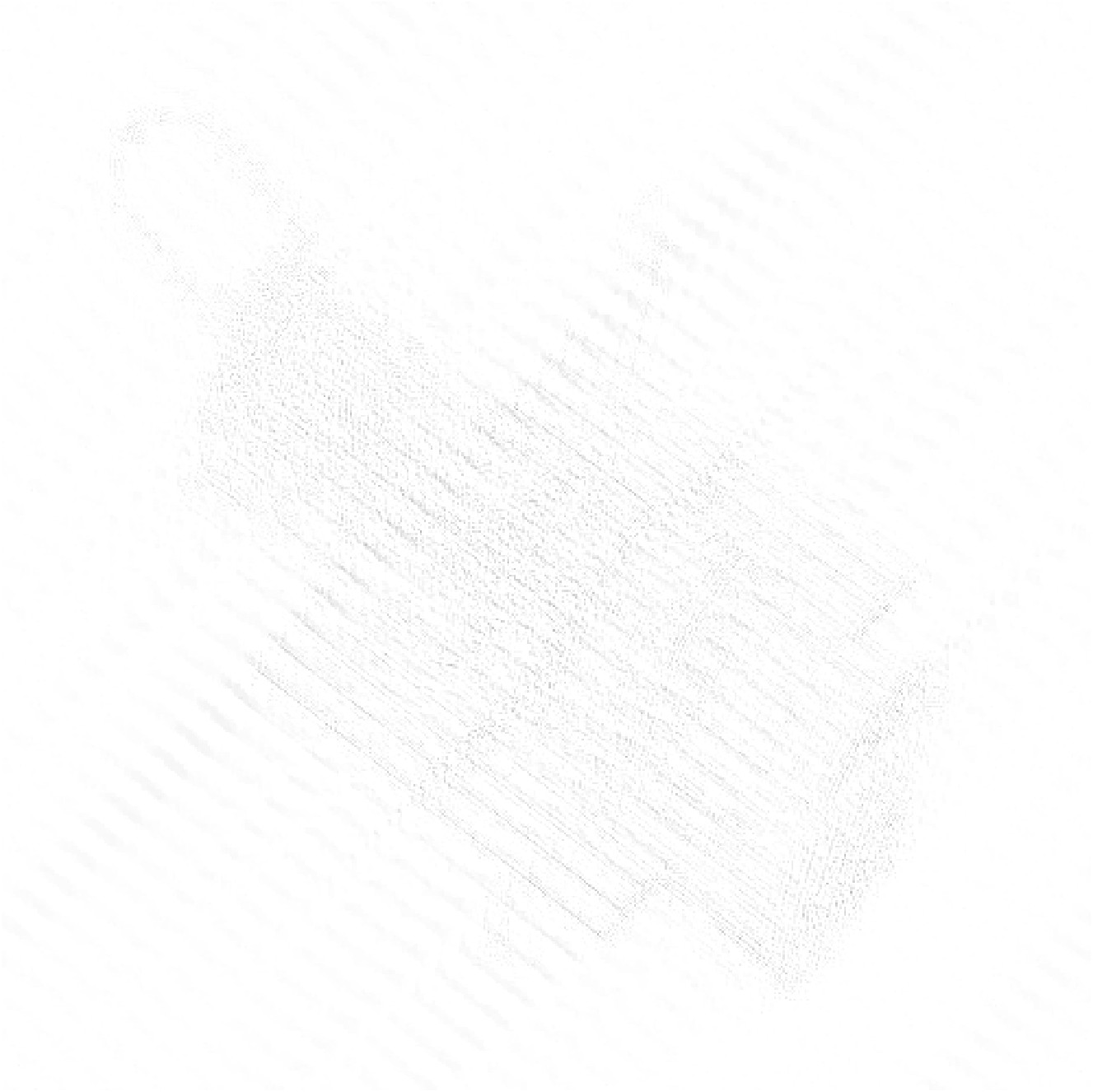}
     &
  \includegraphics[height = 0.13\textwidth, width = .15\textwidth]{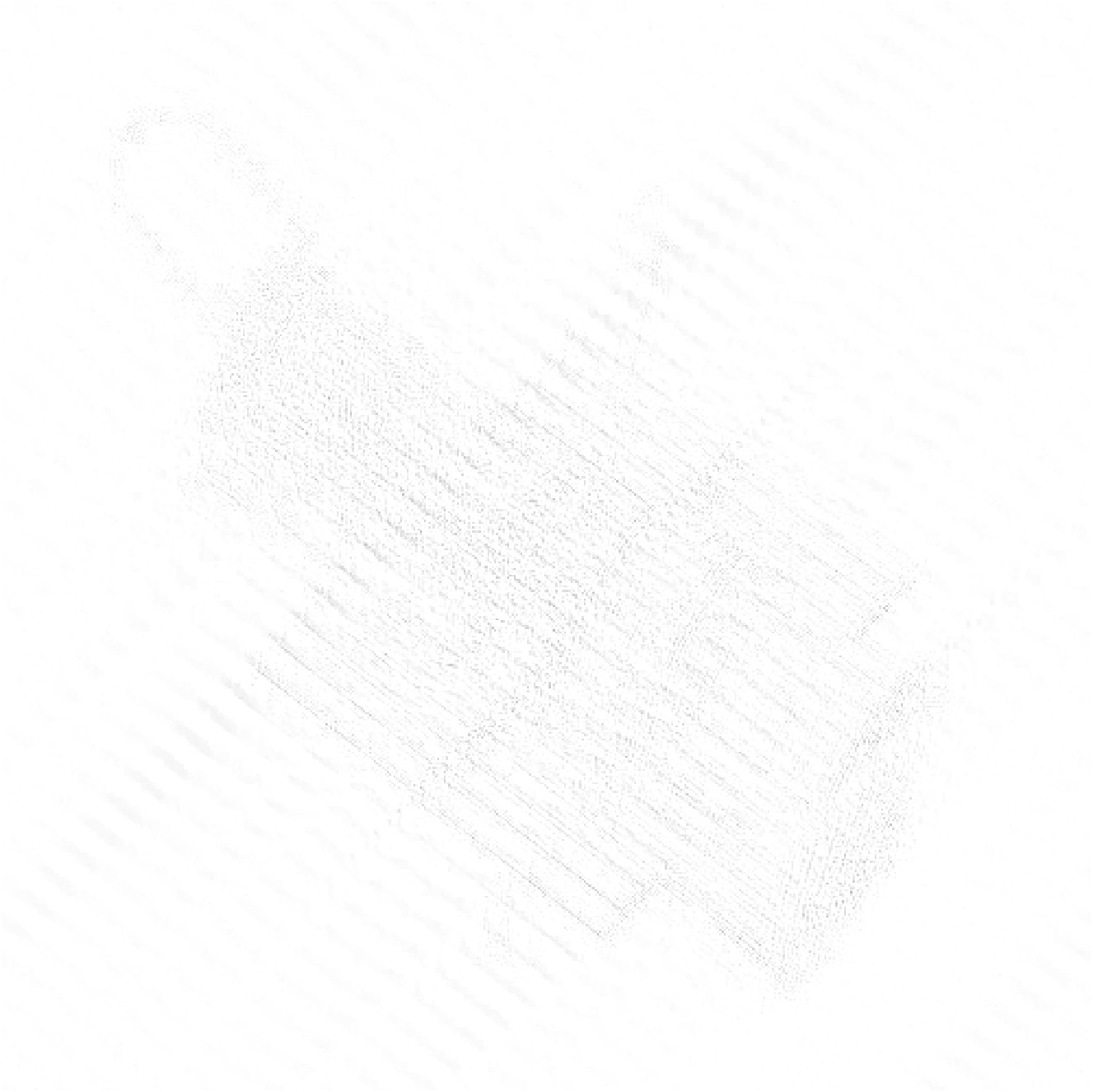}
  &
   \includegraphics[height = 0.13\textwidth, width = .15\textwidth]{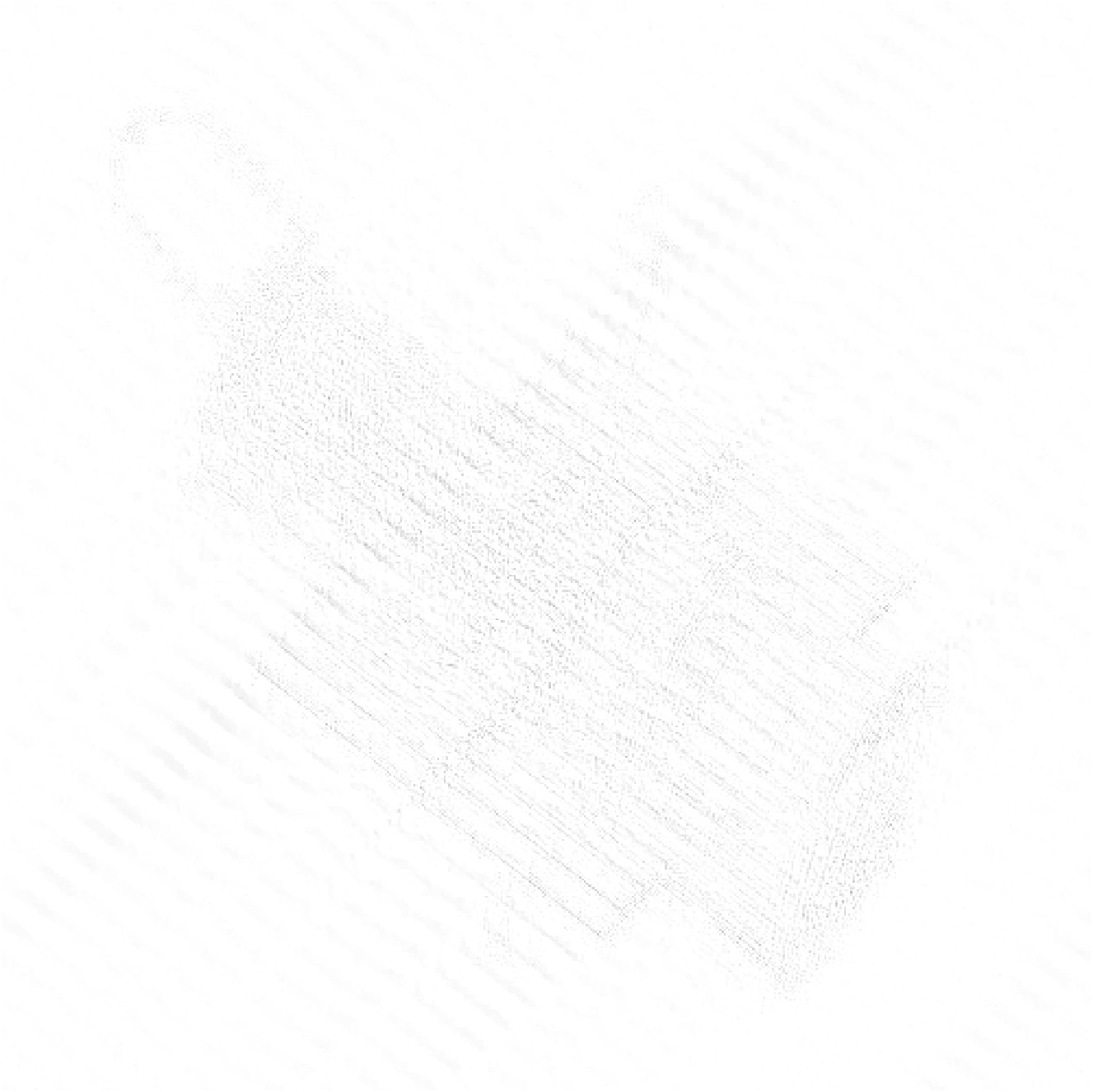}
\end{tabular}
\caption{Image deblurring (Hubble telescope). The first row shows the reconstructed images for the $0.1\%$ Gaussian noise case 
by MM-GKS (after 25 iterations with RRE = 0.1128 and HaarPSI = 0.9130) and LM-MM-GKS with compression approaches tSVD, RBD, SEC, and SOC (RREs and HPs reported in Table \ref{Table: RRE_telescope}).
The second row shows the corresponding error images.}
\label{Figure: Telescope_errors}
\end{figure}

We consider an
image deblurring problem, where the Hubble telescope image has been corrupted by motion blur. The aim is to reconstruct an approximation of the $500 \times 500$ pixels true telescope image shown in Figure \ref{Fig: telescopeImages}(a), given the observed blurred and noisy image with 0.1$\%$ Gaussian noise in Figure 
\ref{Fig: telescopeImages}(c). 
The motion blur point-spread function (PSF) of size $14\times 14$ pixels is shown in 
Figure \ref{Fig: telescopeImages}(b).
The PSF implicitly determines the blurring operator $\bA \in \R^{250,000 \times 250,000}$.  

The goals of this test are 1) to compare
the reconstruction quality of our proposed method across 
the four compression approaches (tSVD, RBD, SEC, and SOC), 2) to compare LM-MM-GKS reconstructions with those of MM-GKS when the memory capacity is limited and MM-GKS can only take a limited number of steps,
and 3) to investigate the effect
on reconstruction quality of
$k_{\min}$ (the number of solution basis vectors retained after compression). 
We set the memory capacity $k_{\max}=25$
and compute the reconstructions for the four compression approaches with 
$k_{\min} = 5, 10, \mbox{ and } 15$.
The compression tolerance 
was set to $\epsilon_{tol} = 10^{-5}$
for RBD and to $tol = 1$ for SOC. 
All methods are run until 
the relative 
difference of two consecutive reconstructions, cf. \eqref{eq: TOL1}, falls below  
$10^{-5}$, or
the maximum number of iterations (300) is reached.
Table~\ref{Table: RRE_telescope} reports the RRE and HaarPSI for LM-MM-GKS with each of the four compression approaches and three choices of $k_{\min}$ along with the number of expansion steps until convergence. 

To compare, we run MM-GKS up to a hypothetical memory capacity of $25$ basis vectors. All four LM-MM-GKS variants substantially outperform MM-GKS, with RREs significantly reduced (in the range $0.047$-$0.06$) and HaarPSI values above $0.975$. The results are stable across both the compression approaches and the values of $k_{\min}$, indicating that LM-MM-GKS is robust with respect to these choices.
The reconstructed images and the corresponding error images in inverted colormap (darker regions indicate larger error) are shown in Figure~\ref{Figure: Telescope_errors}; the difference in quality between MM-GKS and LM-MM-GKS is particularly evident in the error images.

Since all four compression approaches yield comparable reconstruction quality on this test problem, the choice of compression method is not critical. In the subsequent experiments, we use tSVD for compression unless stated otherwise, as it is straightforward to implement and performed consistently well across all settings.

{\color{black}{
\begin{table}[ht!]
\centering
\small
\setlength{\tabcolsep}{4pt}
\begin{tabular}{@{} c ccc ccc ccc ccc @{}}
\toprule
 & \multicolumn{3}{c}{tSVD}
 & \multicolumn{3}{c}{RBD}
 & \multicolumn{3}{c}{SOC}
 & \multicolumn{3}{c}{SEC} \\
\cmidrule(lr){2-4} \cmidrule(lr){5-7} \cmidrule(lr){8-10} \cmidrule(lr){11-13}
$k_{\rm min}$ & RRE & HP & iter & RRE & HP & iter & RRE & HP & iter & RRE & HP & iter \\
\midrule
5  & 0.0471 & 0.9861 & 280 & 0.0467 & 0.9865 & 300 & 0.0490 & 0.9848 & 300 & 0.0490 & 0.9848 & 300 \\[2pt]
10 & 0.0499 & 0.9839 & 285 & 0.0510 & 0.9831 & 285 & 0.0505 & 0.9835 & 300 & 0.0503 & 0.9837 & 300 \\[2pt]
15 & 0.0544 & 0.9800 & 250 & 0.0526 & 0.9816 & 300 & 0.0591 & 0.9766 & 300 & 0.0603 & 0.9758 & 300 \\
\bottomrule
\end{tabular}
\caption{Image deblurring (Hubble telescope). RRE and HaarPSI for $k_{\rm min} = 5, 10, 15$, and $k_{\rm max}= 25$ for the $500\times 500$ pixels Hubble telescope image with motion blur. LM-MM-GKS with each compression variant is run
until the relative difference of consecutive reconstructions falls below $10^{-5}$
or the maximum of $300$ expansion steps is reached. The column ``iter'' reports the number of expansion steps used; values less than $300$ indicate early convergence.}
\label{Table: RRE_telescope}
\end{table}
}}

\subsection{Computerized tomography}
\label{sec: CompTomo}
In this section, we test the streaming version of LM-MM-GKS described in Section \ref{sec: streaming}.  
In the ideal 
case, we would solve (\ref{eq:rtomoproblemall}). However, we consider two scenarios where this is not possible. For Test 1, we assume that 
only a fraction of the sinogram data is available for processing at any given time. 
For Test 2, we assume 
that all the data has been collected, but the system is too large to fit in memory. 
Therefore, in both cases, we use s-LM-MM-GKS to approximately solve instead the $n_t$
subproblems (\ref{eq:rtomo1})-(\ref{eq:rtomont}) and compare with HyBR-recycle.  

\subsubsection{Test 1} 
We consider the parallel tomography example from IRTools \cite{gazzola2018ir}
with 
a $500 \times 500$ pixels Shepp-Logan phantom. 
The data is streamed in three groups; $n_t = 3$ in \eqref{eq:rtomont}. The first and the second subproblems correspond to projection angles from $0^{\circ}-44^{\circ}$ and  $45^{\circ}-89^{\circ}$, respectively, with an angle gap of $1^{\circ}$. The third subproblem corresponds to $45$  
projection angles from  $90^{\circ}$ to $179^{\circ}$ with an angle gap of $2^{\circ}$.
All subproblems are based on $707$ parallel rays. 
This setup produces forward operators and observations $\bA_{1}, \bA_{2}, \bA_3 \in \R^{ 45 \cdot 707 \times 500^2}$ and $\bd_1, \bd_2, \bd_3 \in \R^{45\cdot 707}$, respectively. We 
add $0.1\%$ noise to each data
vector.
The sinograms and the true image are provided in Figure \ref{fig: Streaming3prob_true}. We set $k_{max}=40$ and $k_{min}=10$ for all limited memory methods described below (HyBR-recycle, LM-MM-GKS, and s-LM-MM-GKS).
Based on the image deblurring results in Section~\ref{Sec: ImgDebl} (Table~\ref{Table: RRE_telescope}), where all four compression approaches yield comparable reconstruction quality and tSVD exhibited the fastest convergence, we use tSVD compression for all LM-MM-GKS variants in this and subsequent experiments.

\begin{figure}[htbp]
\centering
  \begin{tabular}{cccc}
$\bx_{\rm true}$ &  $\bd_1$: $0^{\circ}-44^{\circ}$ &  $\bd_2$: $45^{\circ}-89^{\circ}$ & $\bd_3$: $90^{\circ}-179^{\circ}$\\
  \includegraphics[angle = 90, height = 0.15\textwidth, width = .18\textwidth]{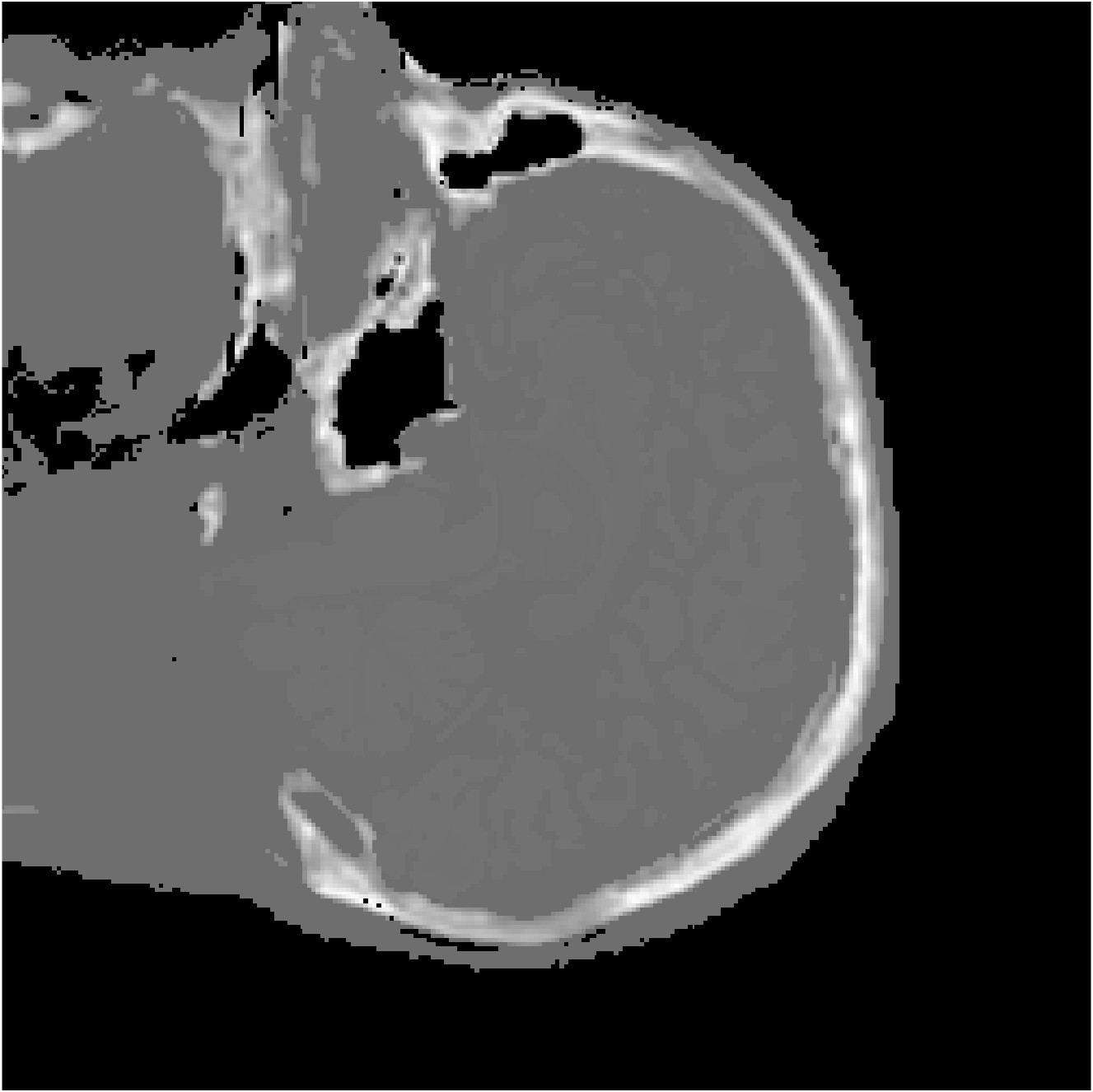} &
  \includegraphics[angle = 90,height = 0.1\textwidth, width = .22\textwidth]{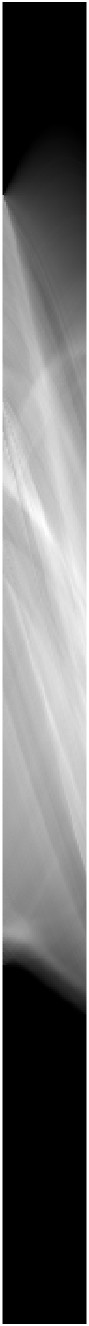} &
  \includegraphics[angle = 90,height = 0.1\textwidth, width = .22\textwidth]{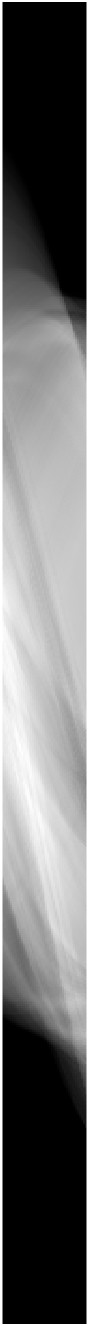} 
   &
  \includegraphics[angle = 90,height = 0.1\textwidth, width = .22\textwidth]{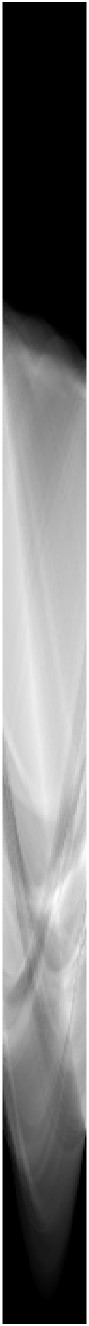}\\
(a) & (b)  & (c)  & (d)
\end{tabular}
  \caption{Test 1: Streaming tomography example. The $500 \times 500$ pixels true image is provided in (a), along with the three observed sinograms,
  $\bd_1$ in (b), $\bd_2$ in (c), and $\bd_3$ in (d), corresponding to projections taken at angles: $0^{\circ}-44^{\circ}$,  $45^{\circ}-89^{\circ}$, and $90^{\circ}-179^{\circ}$, respectively.}
  \label{fig: Streaming3prob_true}
\end{figure}
We compare s-LM-MM-GKS to the recently proposed hybrid projection method with recycling, HyBR-recycle \cite{jiang2021hybrid}, which uses tSVD type compression for its recycled subspace.
HyBR-recycle also employs subspace recycling in the streaming data setting and selects the regularization parameter using the discrepancy principle (DP) within the projected problem. Although GCV was considered to be consistent with the parameter-selection strategy used in the LM-MM-GKS and MM-GKS methods, it did not produce satisfactory results when used with HyBR-recycle for this problem. Therefore, we use the HyBR-recycle with DP regularization parameter selection instead.   
For the purpose of subspace expansion, recycling GKB is used to enlarge the solution search space in HyBR-recycle instead of GKS used in LM-MM-GKS methods. Moreover, HyBR-recycle uses $\Psi = \bI$ as a regularization operator and the 
$\ell_2$-norm. Thus, the method generates different solution spaces than LM-MM-GKS and selects the recycle space differently.
We expect that while the method may be competitive in terms of memory required
and runtime, it may not be qualitatively competitive, because it does not effectively enforce edge-constraints as s-LM-MM-GKS does.
However, we include HyBR-recycle in our comparisons to demonstrate that the nonlinear approach and the solution of the more structured regularization problem in s-LM-MM-GKS yields improved reconstructions, particularly for the streaming example. We consider the following setting.
\begin{enumerate}
    \item We run HyBR without recycling \cite{chung2008weighted} on the first subproblem \eqref{eq:rtomo1} 
    (HyBR 1st) 
    and on the full problem 
    \eqref{eq:rtomoproblemall} (HyBR all).
    \item We run HyBr on  
    the first subproblem \eqref{eq:rtomo1} 
    to obtain an approximate solution and an initial subspace. This information is used in solving the second subproblem \eqref{eq:rtomoi} with HyBR-recycle \cite{jiang2021hybrid}, and so on for the third subproblem 
    \eqref{eq:rtomont}.  
    We refer to this approach
    as HyBR-recycle or HyBR-rec.
    \item We run MM-GKS on the first subproblem (MM-GKS 1st) and on the full problem (MM-GKS all data). 
    \item We run LM-MM-GKS on the first subproblem \eqref{eq:rtomo1} (LM-MM-GKS 1st) and on all the data \eqref{eq:rtomoproblemall} (LM-MM-GKS all) 
   \item  
   First, we run s-LM-MM-GKS for a fixed number of {expansion steps} for each subproblem, choosing $i_{\max}$ in Algorithm ~\ref{Alg: RMMGKS} such that {$210$} {expansion steps are performed} for each subproblem. Second, we run a version that exits the Enlarge routine  and moves on the next subproblem (or ends) when the relative change of two consecutive approximations falls below $10^{-3}$; see \eqref{eq: TOL1}. 
\end{enumerate}
{For the first subproblem for s-LM-MM-GKS, the initial subspace of dimension $k_{\min} = 10$ is constructed by first using $\ell = 15$ steps of GKB bidiagonalization (Algorithm~\ref{Alg: RMMGKS}, line 10), followed by building  $\bV_{k_{\min}}$ from a $k_{\min}$-dimensional Krylov subspace associated with $\bA^T\bA + \lambda \bPsi^T \bP_{\epsilon}^2 \bPsi$, augmented with $\bx^{(1)}$ and $\br^{(1)}$ (lines 11-24).
For subsequent subproblems, the recycled compressed subspace $\bV_0$ of dimension $k_{\min} = 10$ is used directly.} The reconstructed images with their error images, i.e., the reconstructed image minus the true image, in inverted color map with a unified scaling (-1, 0) for all the error images, are shown in Figure \ref{fig: Streaming3prob_reconstructions}. 
\begin{figure}[bh!]
\centering
  \begin{tabular}{cccc}
   HyBR 1st &  MM-GKS 1st &  LM-MM-GKS 1st & s-LM-MM-GKS \\
  \includegraphics[angle=90, height = 0.15\textwidth, width = .17\textwidth]{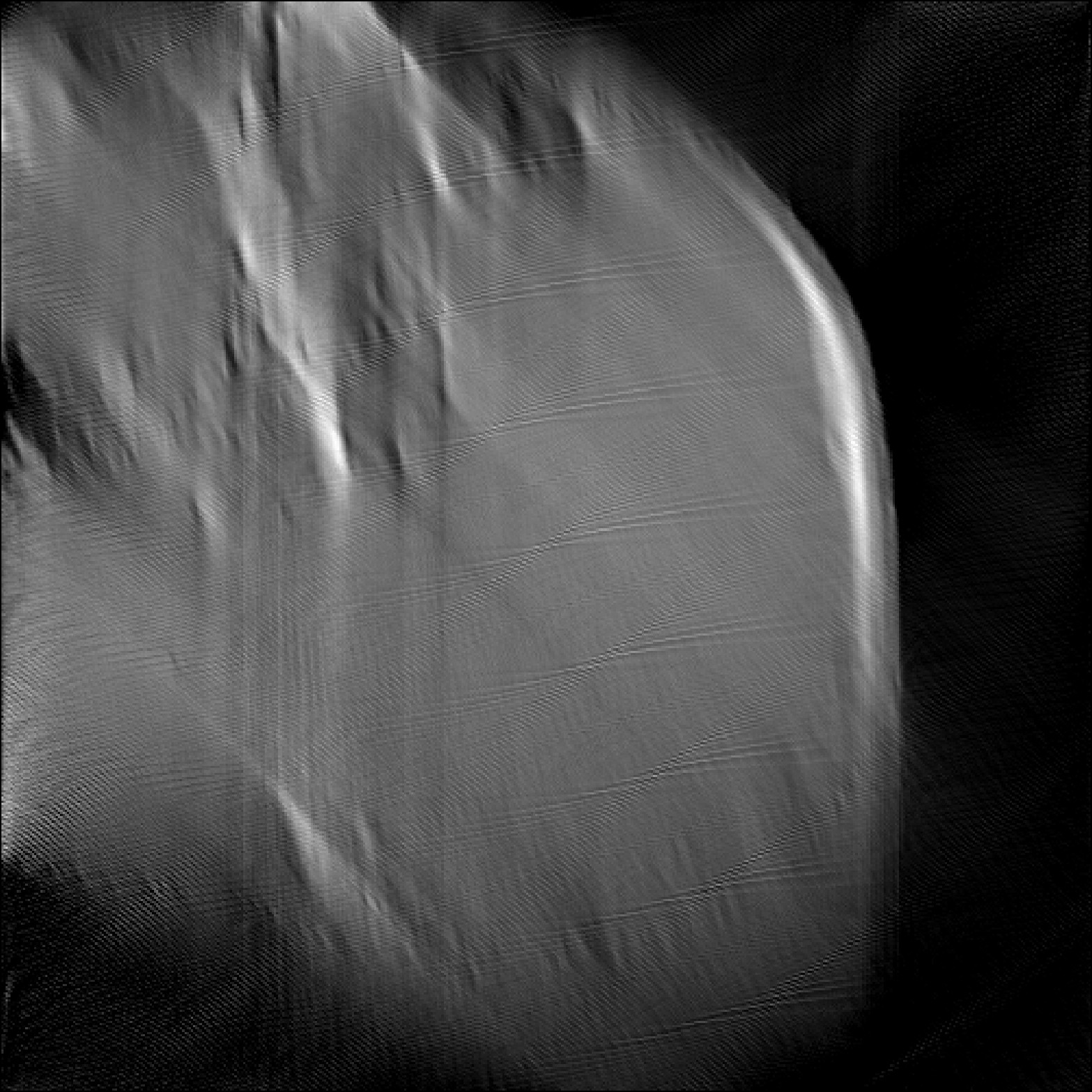} &
  \includegraphics[angle=90, height = 0.15\textwidth, width = .17\textwidth]{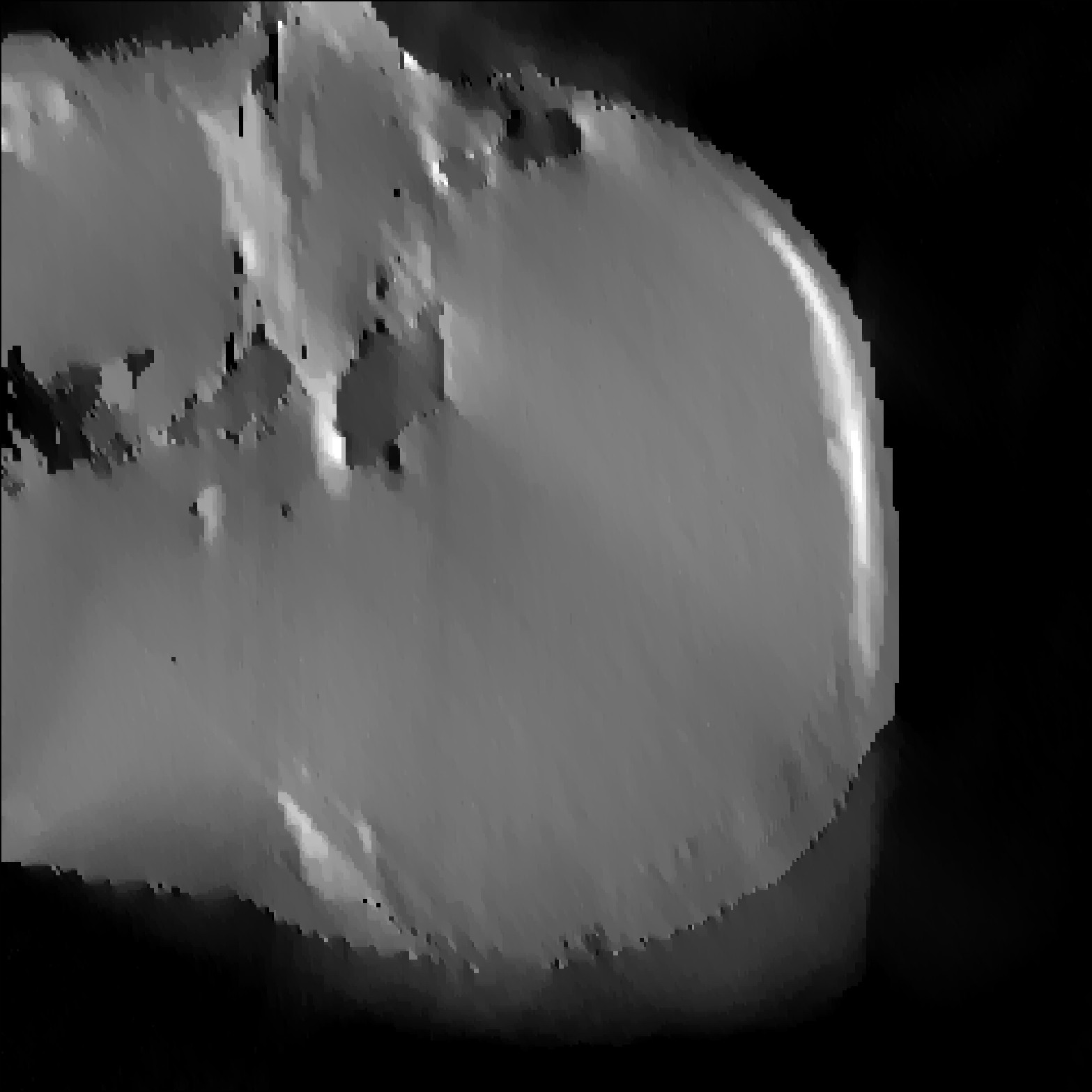} &
  \includegraphics[angle=90, height = 0.15\textwidth, width = .17\textwidth]{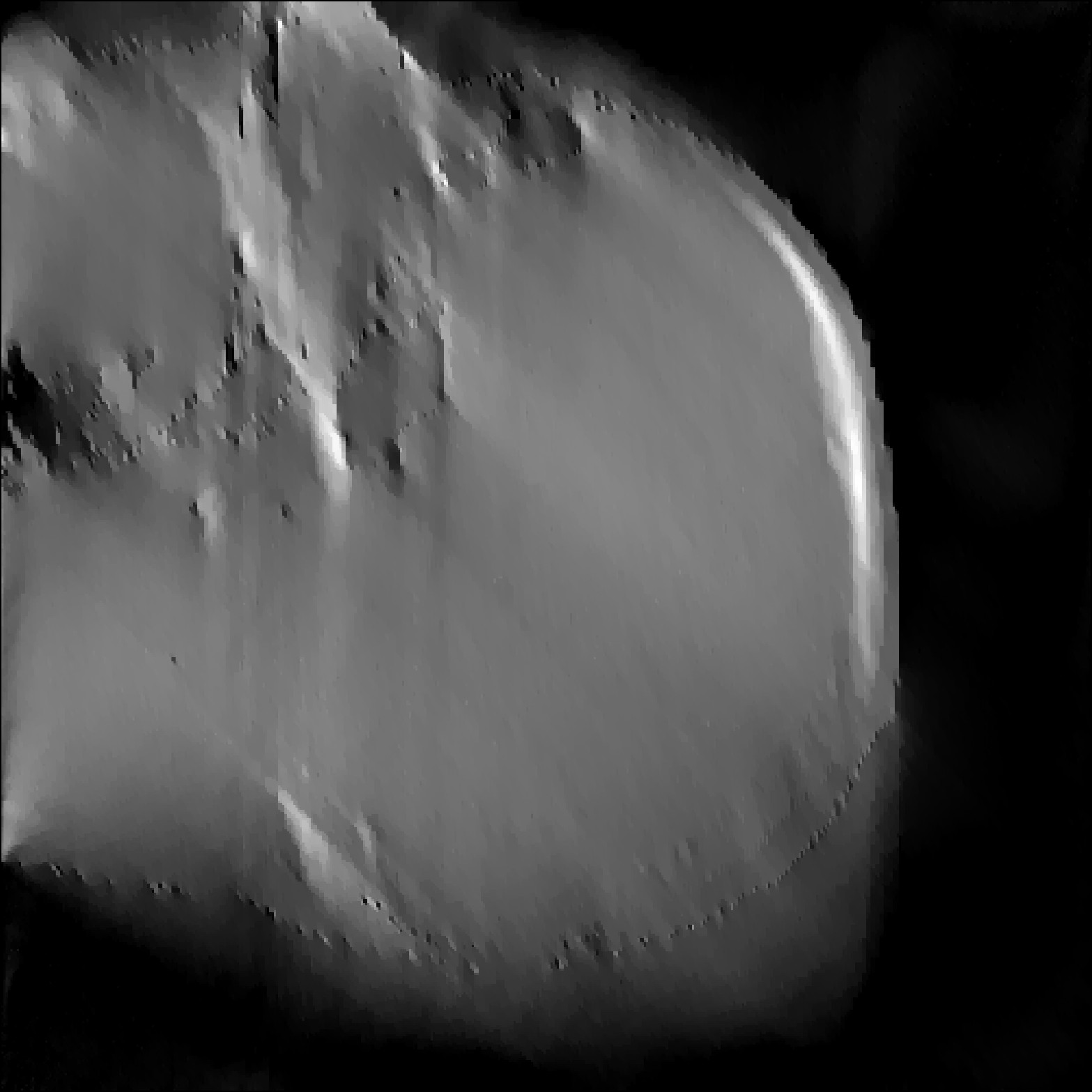}&
    \includegraphics[angle=90, height = 0.15\textwidth, width = .17\textwidth]{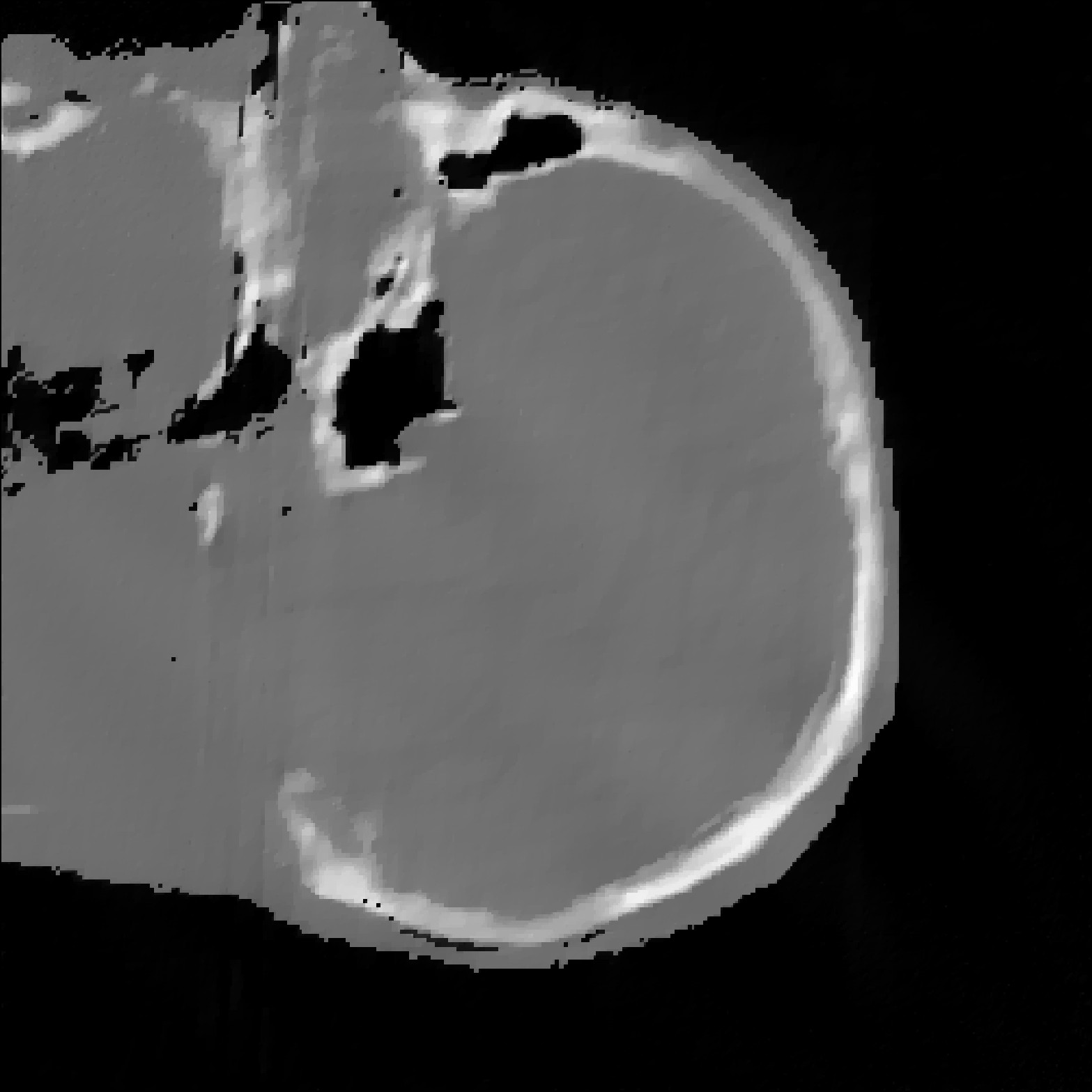}
\end{tabular}
\begin{tabular}{cccc}
  \includegraphics[angle=90, height = 0.15\textwidth, width = .17\textwidth]{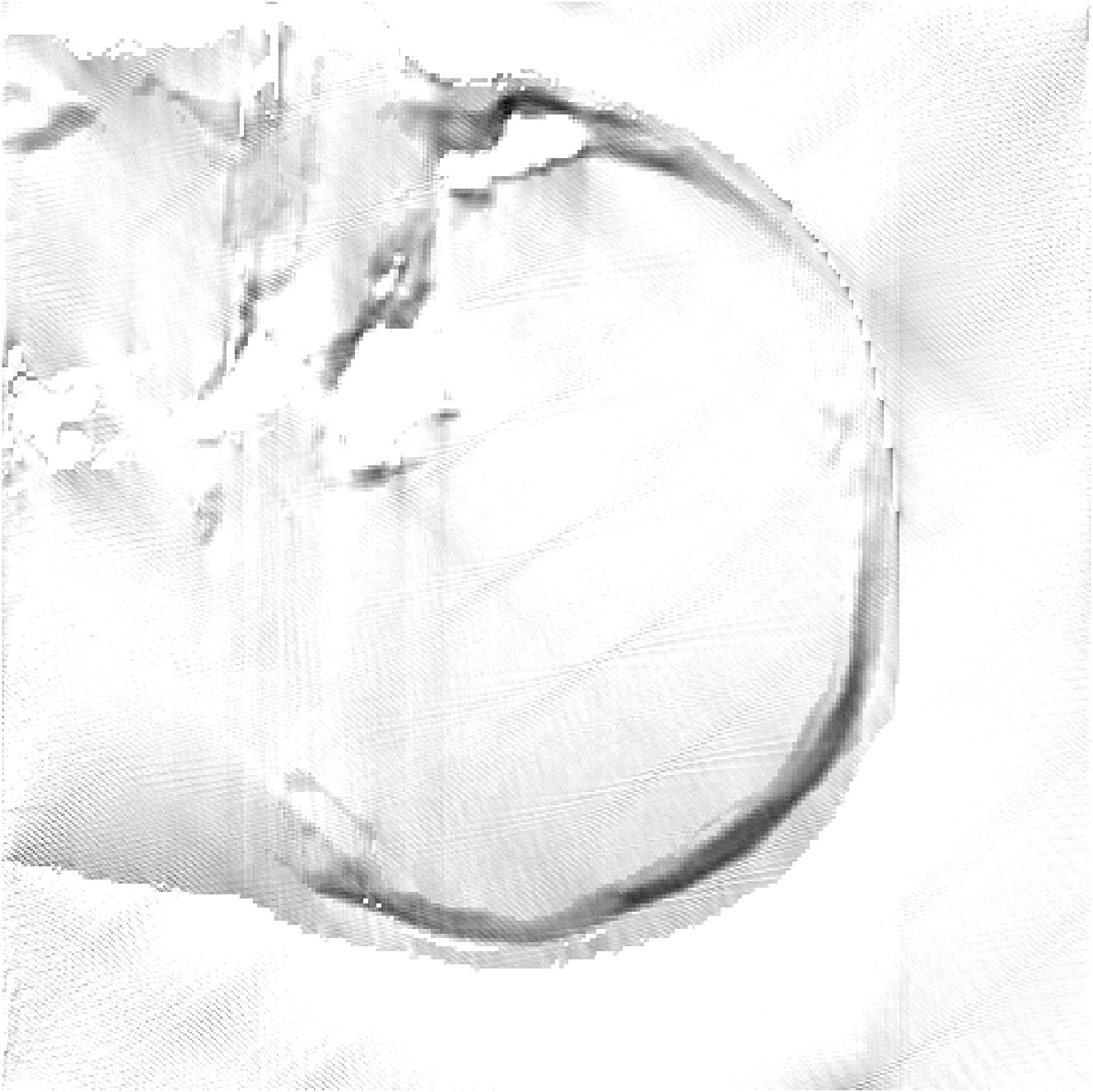}
    &
  \includegraphics[angle=90, height = 0.15\textwidth, width = .17\textwidth]{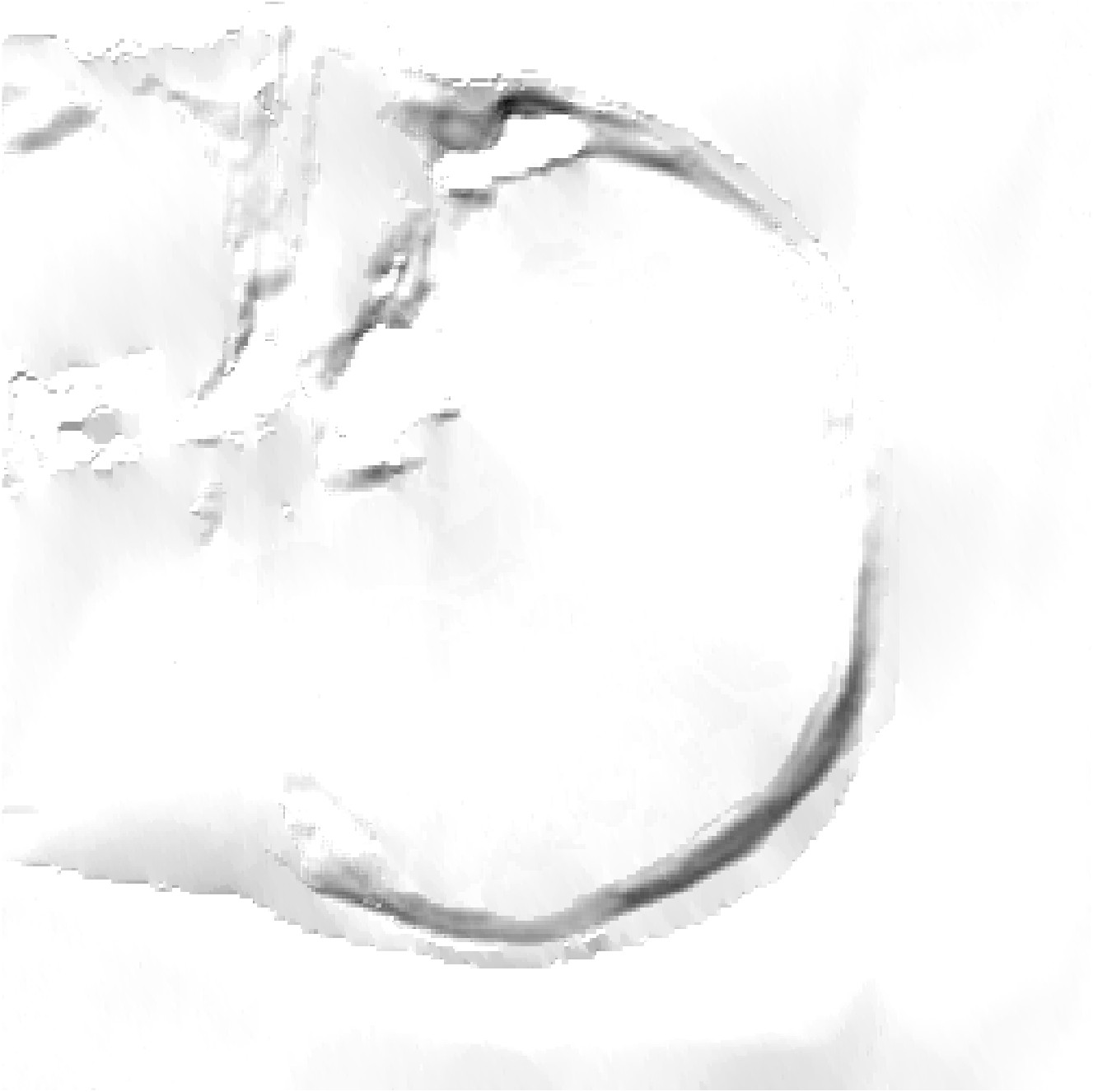}
    &
  \includegraphics[angle=90, height = 0.15\textwidth, width = .17\textwidth]{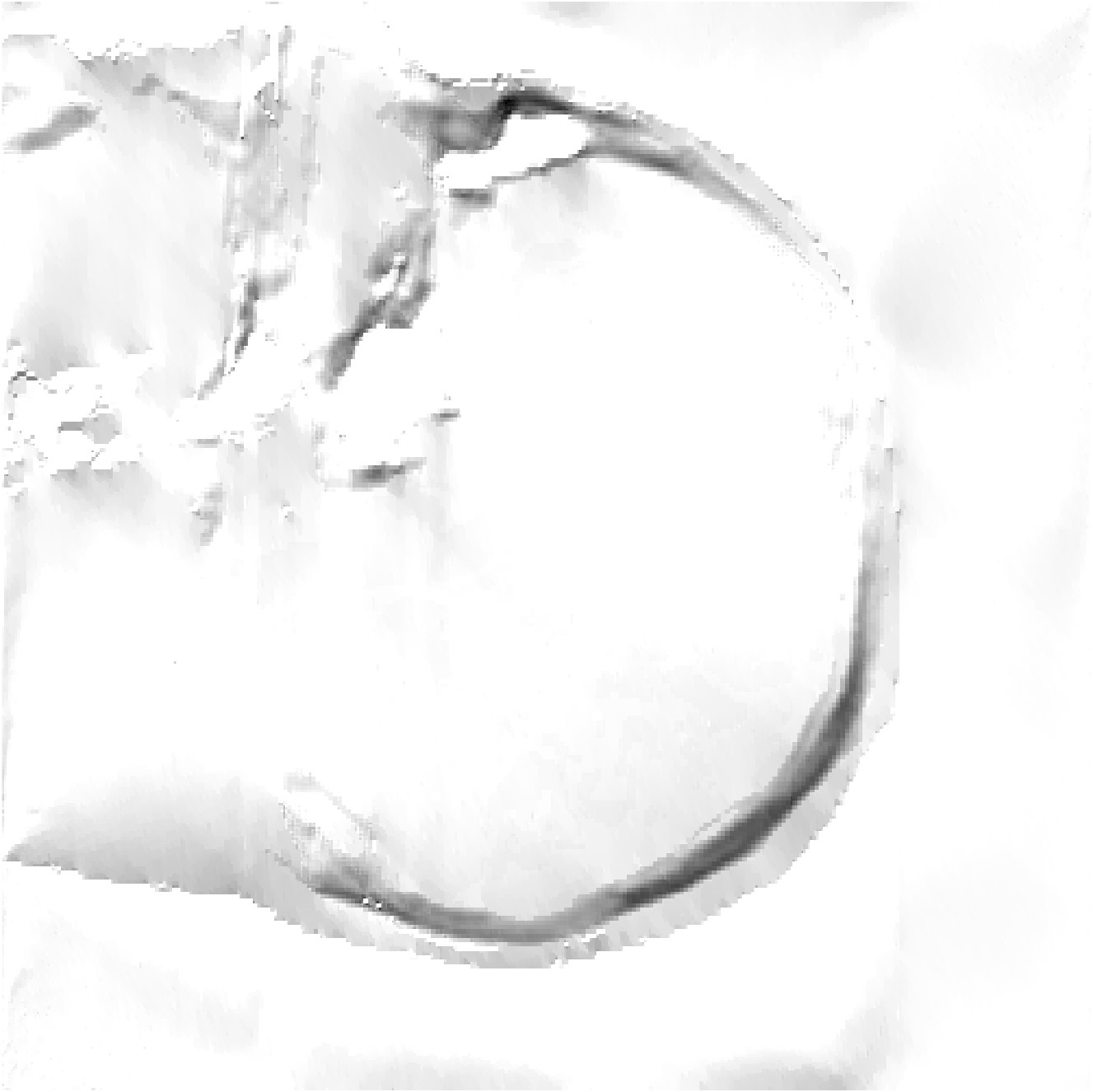}
     &
  \includegraphics[angle=90, height = 0.15\textwidth, width = .17\textwidth]{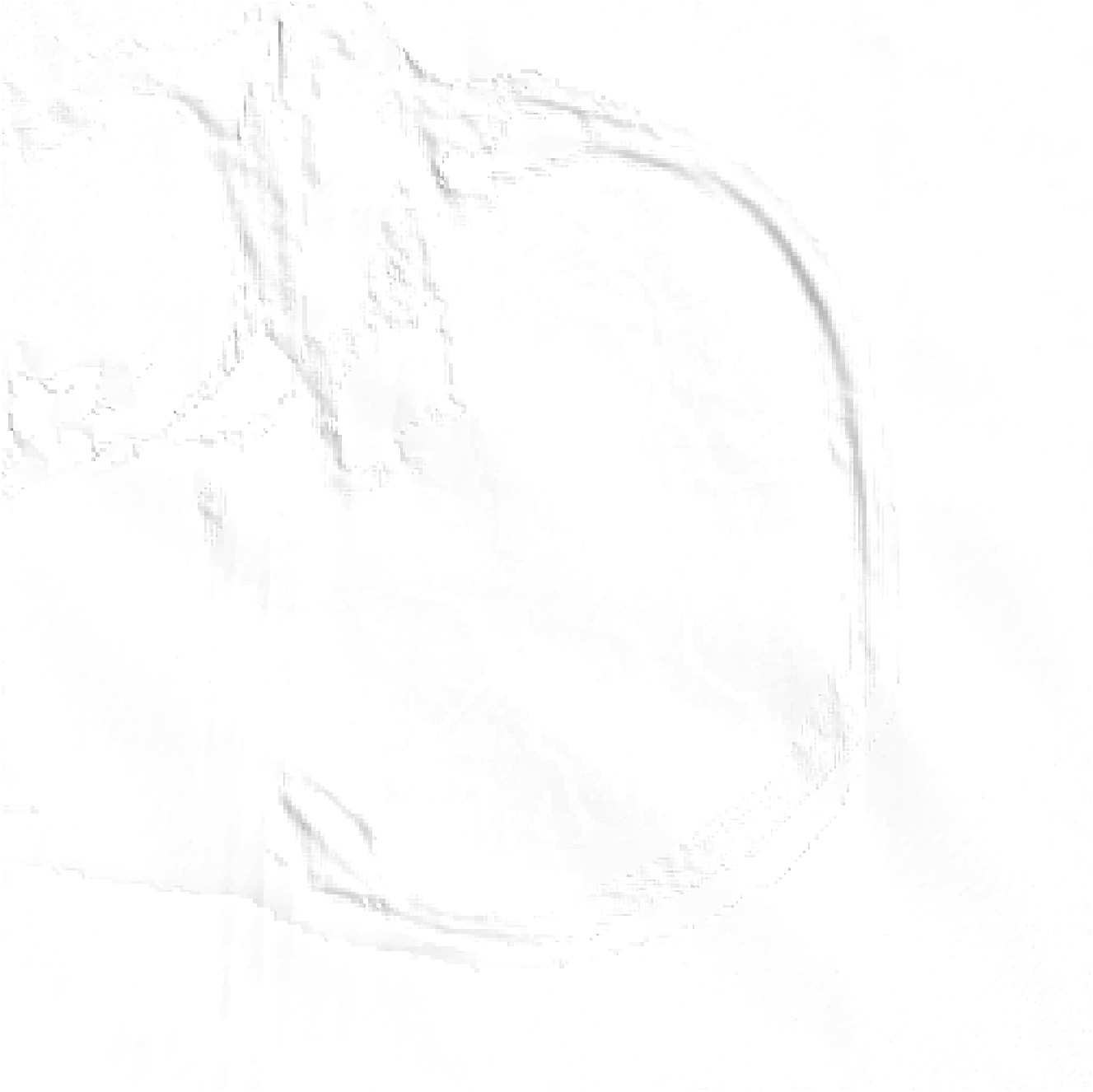}
\end{tabular}
\begin{tabular}{cccc}
HyBR all& HyBR-rec & MM-GKS all & LM-MM-GKS all \\
  \includegraphics[angle=90, height = 0.15\textwidth, width = .17\textwidth]{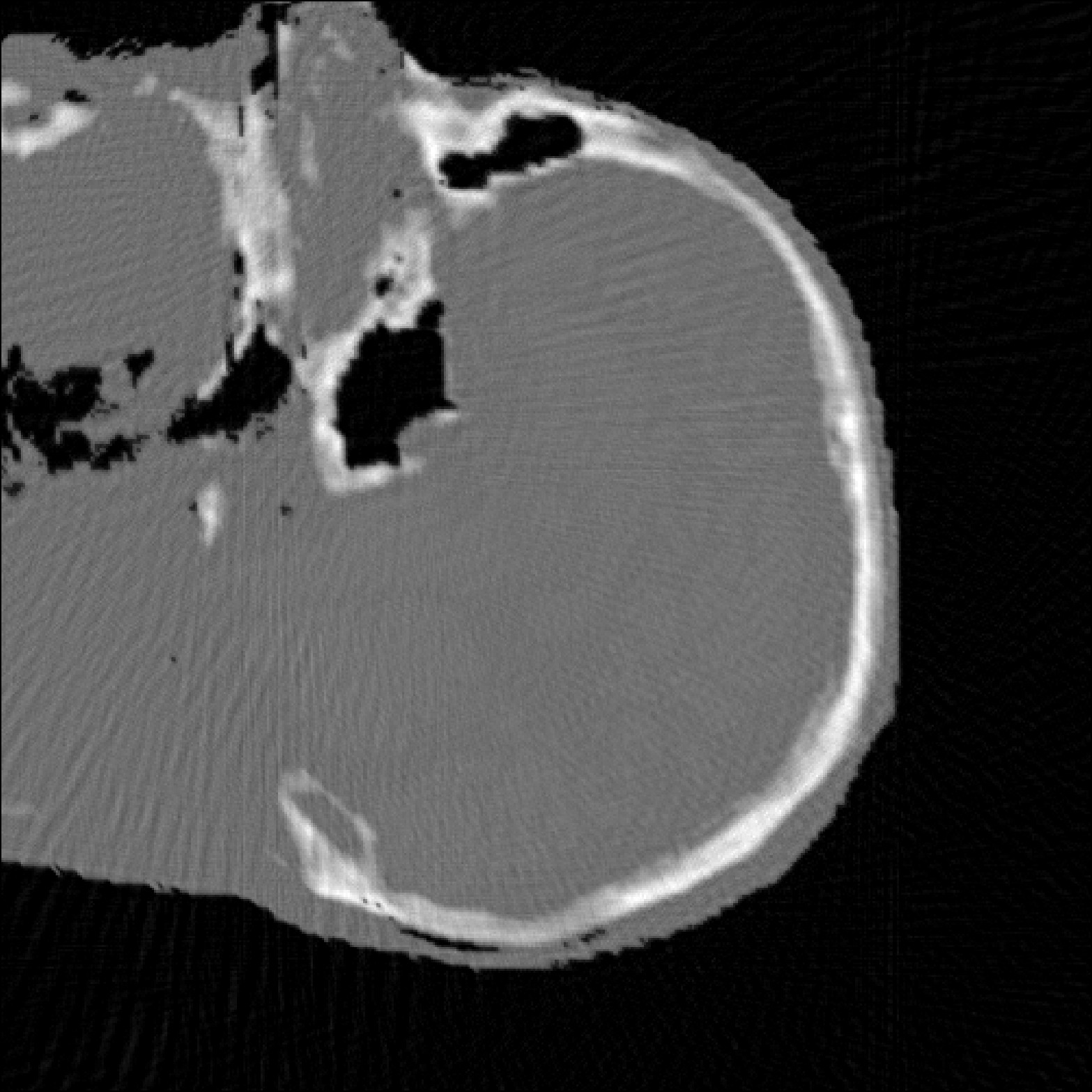}&
  \includegraphics[angle=90, height = 0.15\textwidth, width = .17\textwidth]{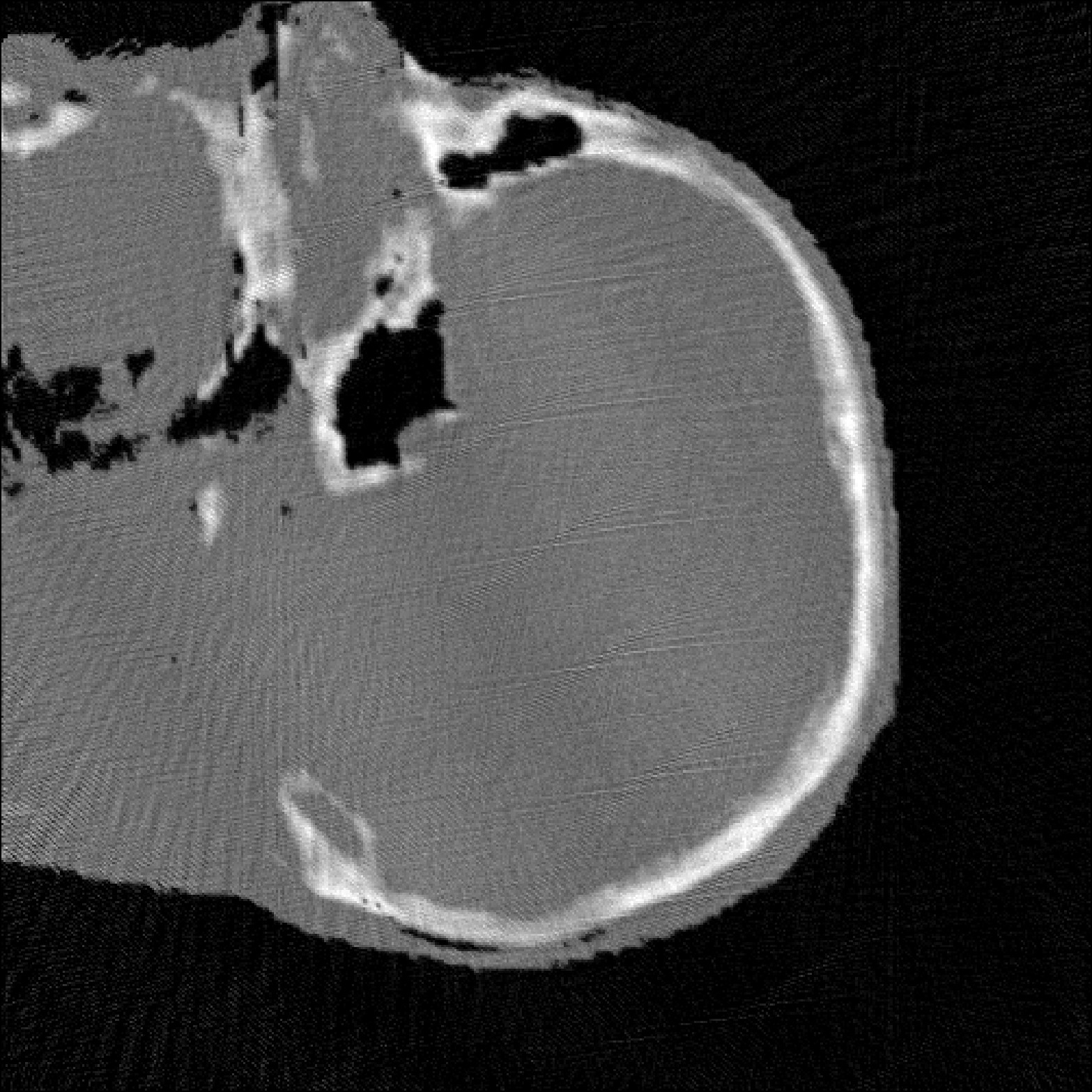}
    &
  \includegraphics[angle=90, height = 0.15\textwidth, width = .17\textwidth]{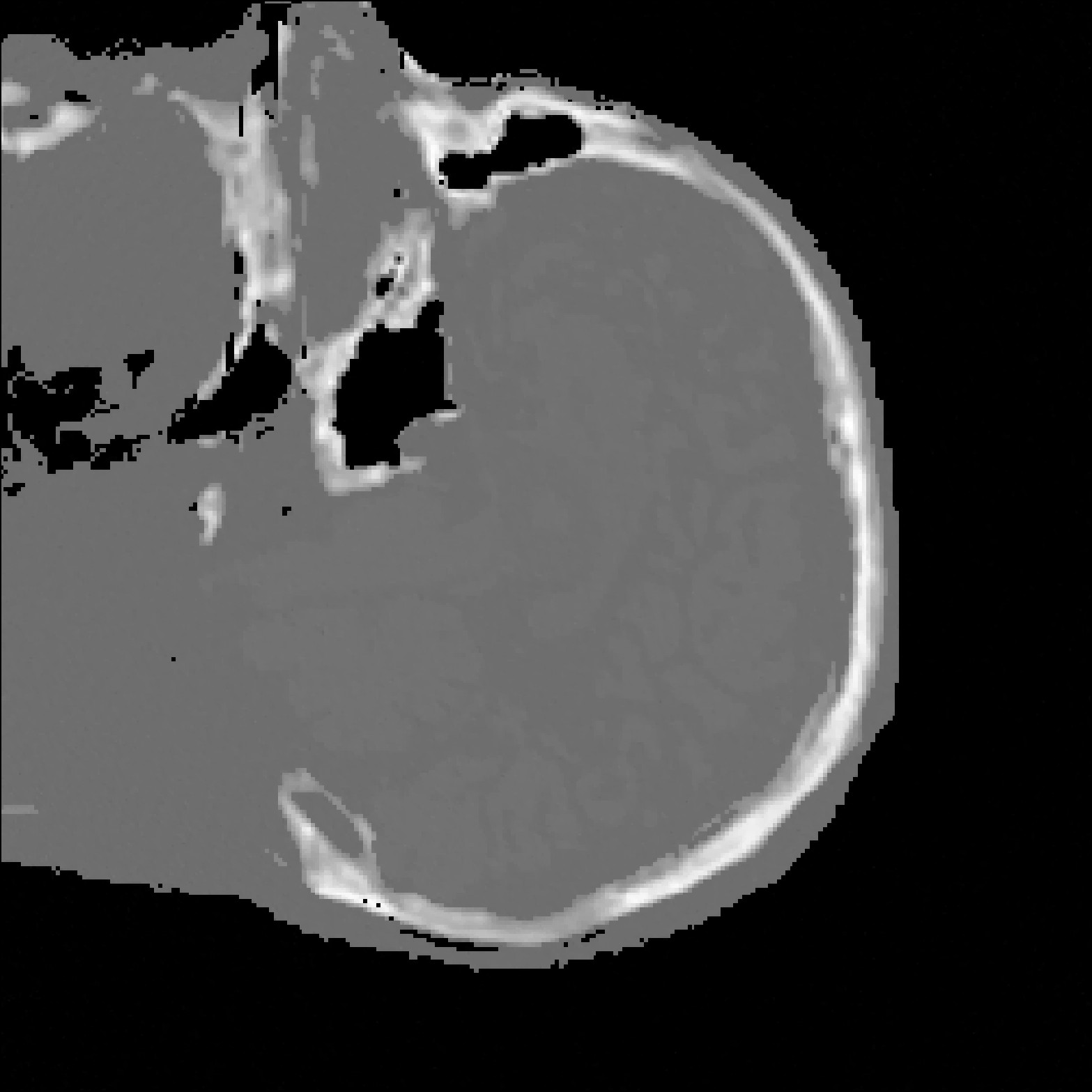}
    &
  \includegraphics[angle=90, height = 0.15\textwidth, width = .17\textwidth]{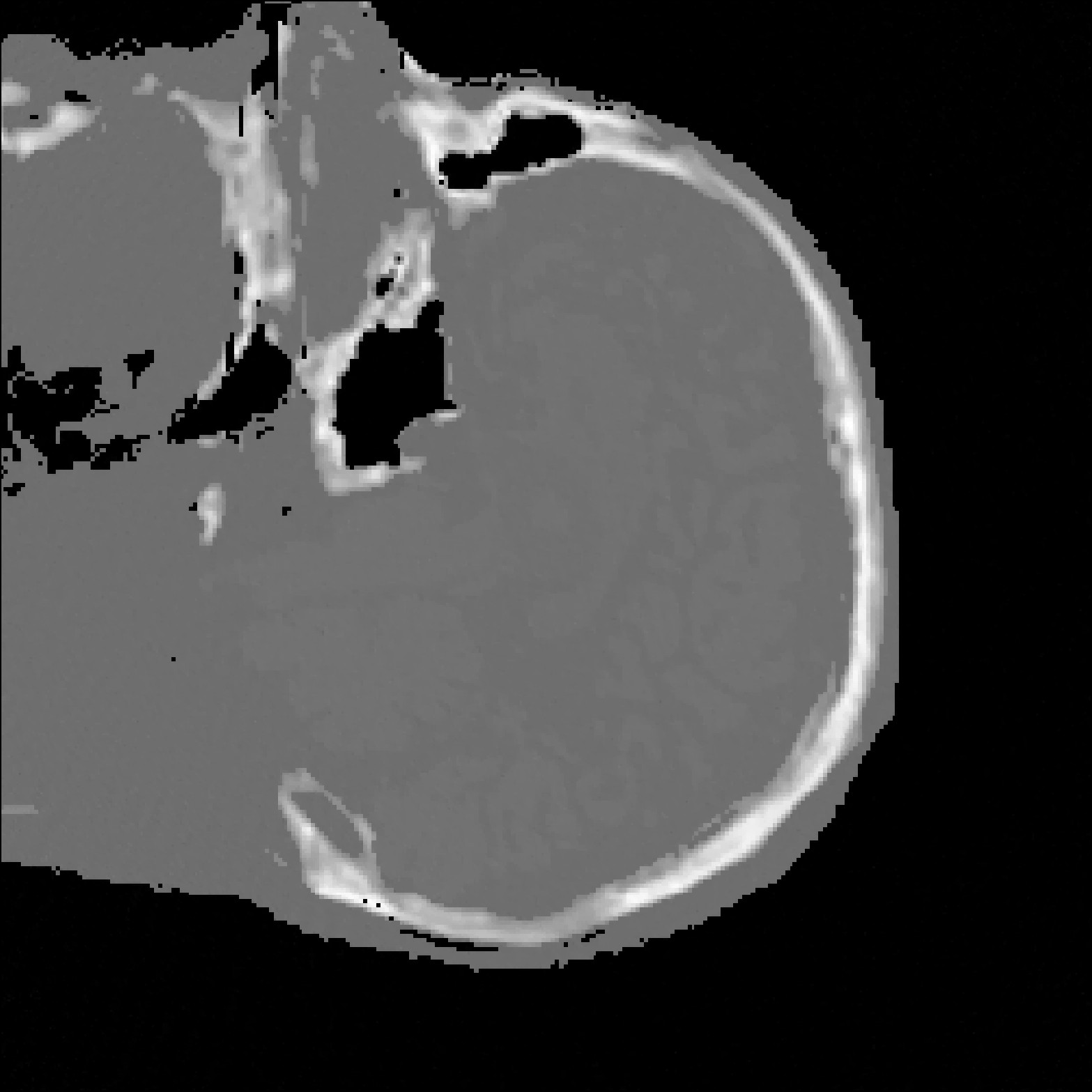}
\end{tabular}
\begin{tabular}{cccc}
  \includegraphics[angle=90, height = 0.15\textwidth, width = .17\textwidth]{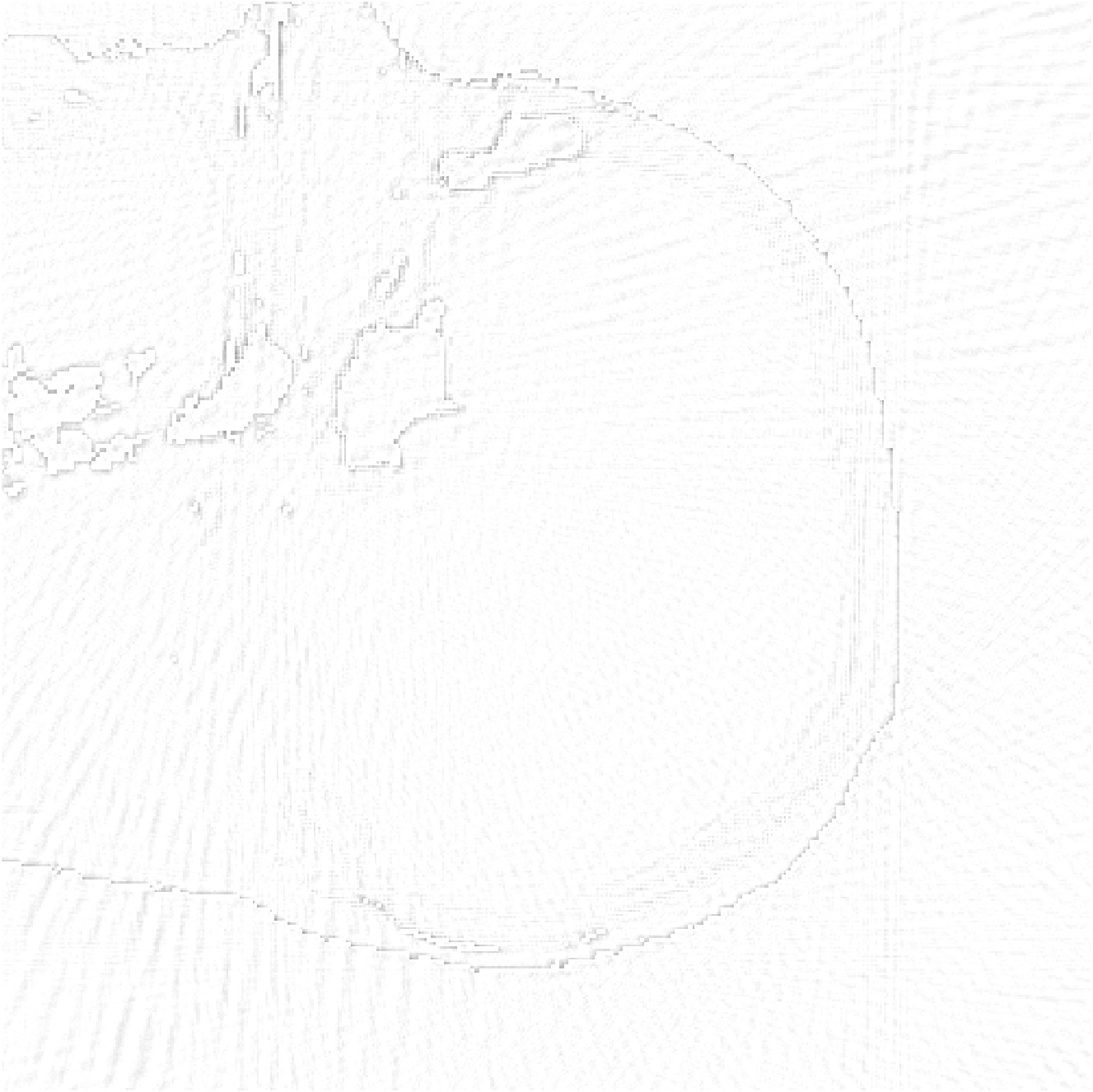}
    &
  \includegraphics[angle=90, height = 0.15\textwidth, width = .17\textwidth]{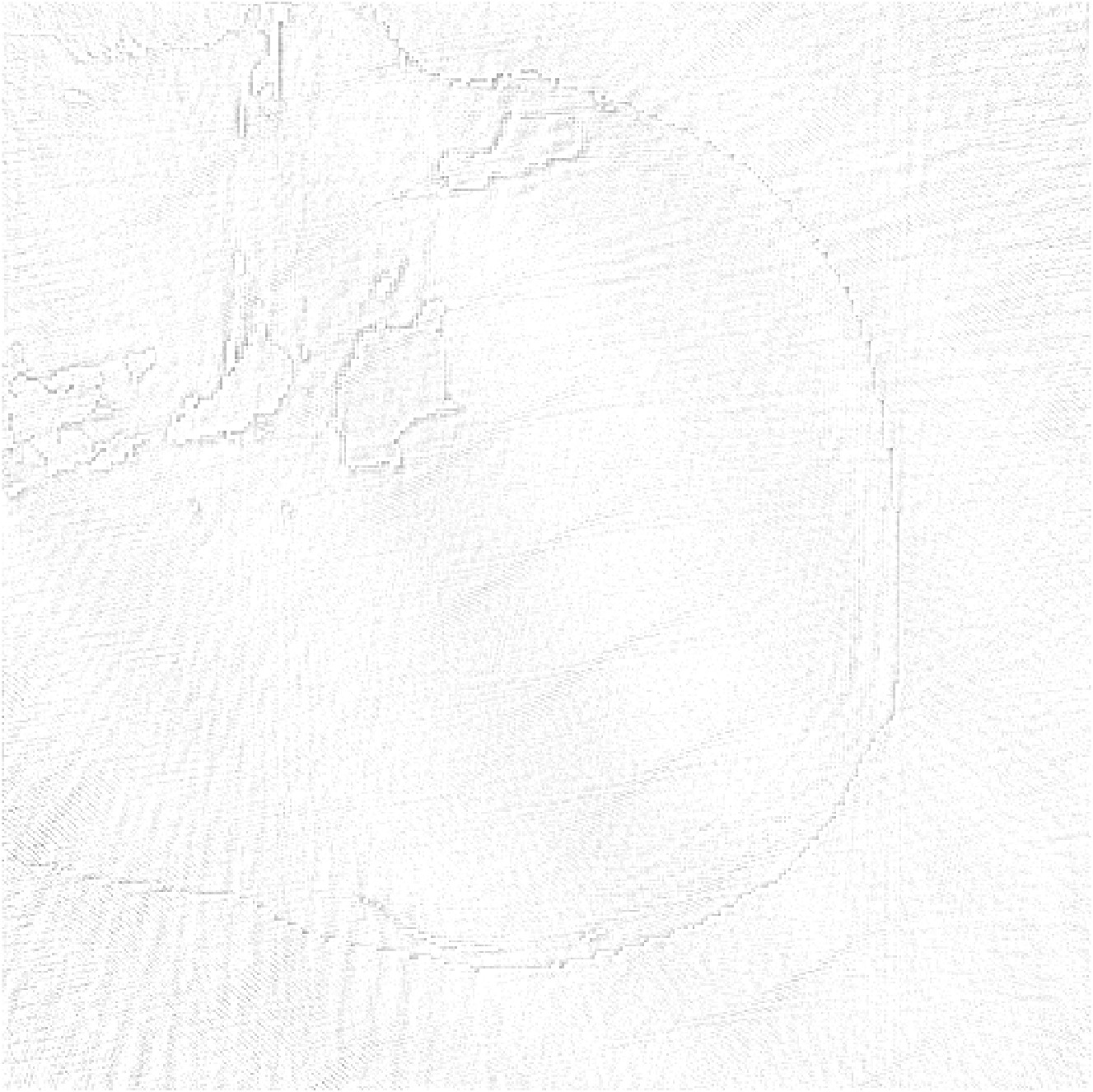}
    &
  \includegraphics[angle=90, height = 0.15\textwidth, width = .17\textwidth]{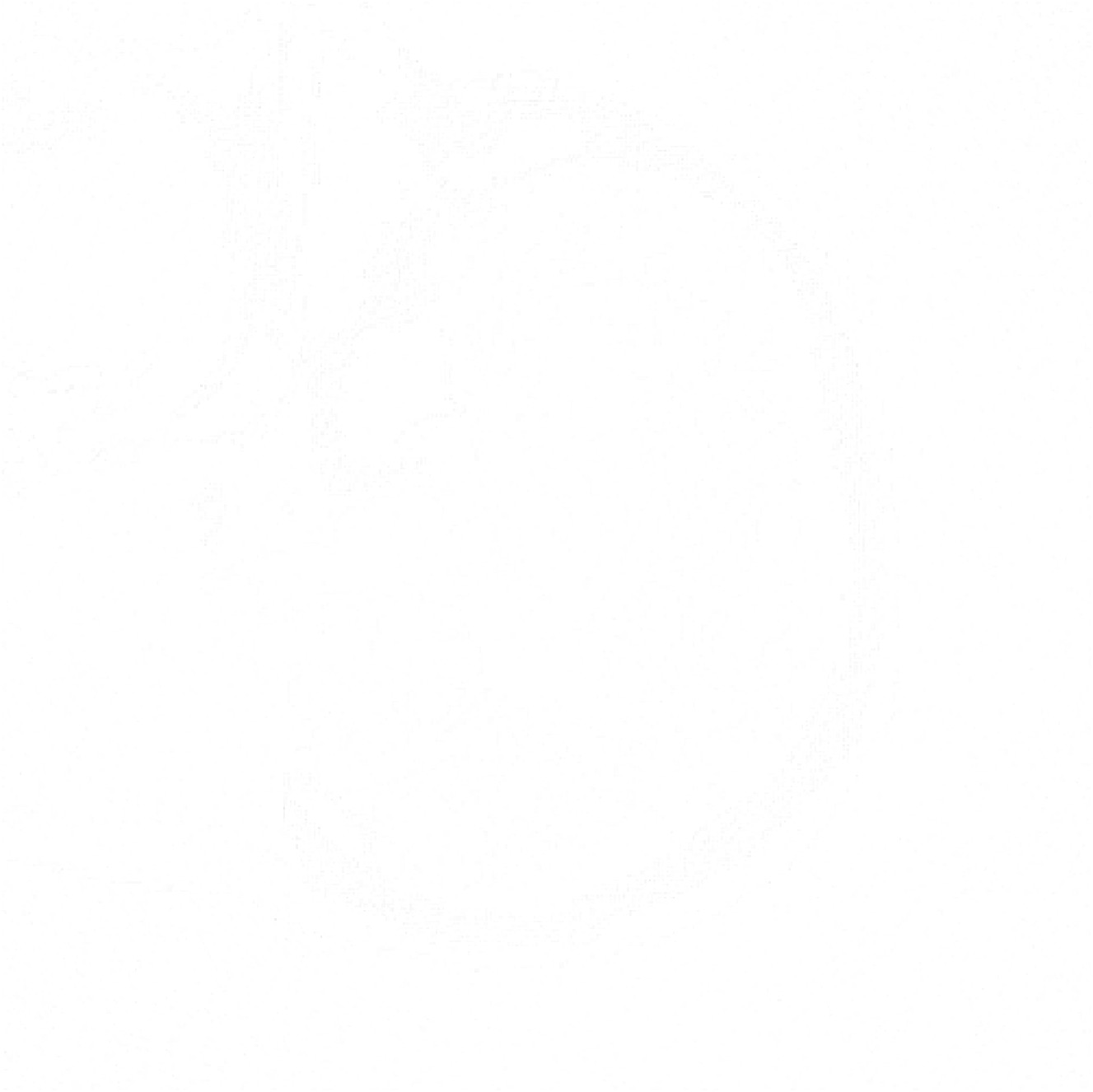}
     &
  \includegraphics[angle=90, height = 0.15\textwidth, width = .17\textwidth]{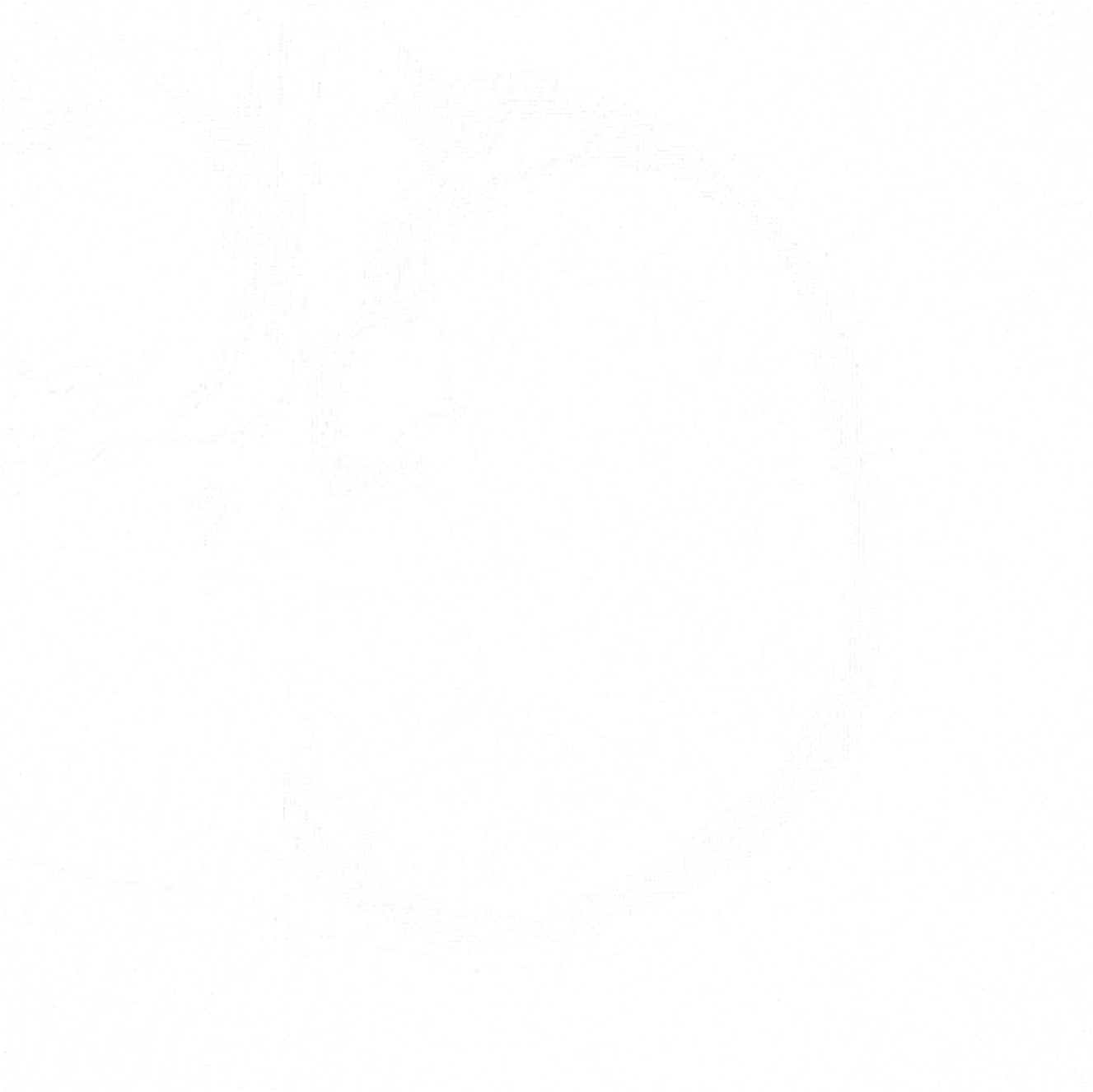}
\end{tabular}
\caption{Test 1: Streaming tomography example. Reconstructed images by method in rows 1 and 3. Corresponding error images in rows 2 and 4. The last two images in row 4 appear as white due to the small errors in the images (see the corresponding RREs for $\sigma = 0.1\%$ in Table~\ref{Table: tomo_stream_3prob_errors_noiselevles_differ}).
}
  \label{fig: Streaming3prob_reconstructions}
\end{figure}

For this test problem, we also vary the noise level from $0.1\%$ to $1\%$, and we summarize the results of the obtained RREs for the various methods and experiments in Table \ref{Table: tomo_stream_3prob_errors_noiselevles_differ}.
These results emphasize that the methods we propose are robust with respect to a range 
of noise levels.
\begin{table}[th!]
\centering
\small
\setlength{\tabcolsep}{3.5pt}
\begin{tabular}{@{} c cc ccc cc @{}}
\toprule
 & \multicolumn{2}{c}{Single subproblem}
 & \multicolumn{3}{c}{Full problem}
 & \multicolumn{2}{c}{Streaming} \\
\cmidrule(lr){2-3} \cmidrule(lr){4-6} \cmidrule(lr){7-8}
$\sigma$
 & HyBR\,1st & MM-GKS\,1st
 & HyBR\,all & MM-GKS\,all & LM-MM-GKS\,all
 & HyBR-rec & s-LM-MM-GKS \\
\midrule
0.1\% & {0.3765} & {0.2766}
      & {0.0925} & {0.0127} & {0.0142}
      & {0.1477} & {0.0644} \\[2pt]
0.5\% & {0.3927} & {0.3005}
      & {0.1029} & {0.0592} & {0.0786}
      & {0.2385} & {0.0942} \\[2pt]
1\%   & {0.4260} & {0.3307}
      & {0.1222} & {0.1284} & {0.1389}
      & {0.4095} & {0.1367}  \\
\bottomrule
\end{tabular}
\caption{Test 1: Computerized tomography. RRE for noise levels $0.1\%$, $0.5\%$, and $1\%$. The memory limit is $k_{\max} = 40$. All experiments are run for {$210$ expansion steps}.}
\label{Table: tomo_stream_3prob_errors_noiselevles_differ}
\end{table}
We observe that both MM-GKS and LM-MM-GKS
applied to the full problem \eqref{eq:rtomoproblemall} produce the best reconstruction results with RREs of {$1.27\%$ and $1.42\%$}, respectively (for
$\sigma = 0.1\%$){; see Table~\ref{Table: tomo_stream_3prob_errors_noiselevles_differ}}. 
{\em However, for (very) large problems the assumption is that addressing the full problem at once is not possible.}
MM-GKS needs to store all the solution subspace vectors, i.e., at iteration {$210$}, it has to store and operate with {$210$} vectors.
In contrast, for LM-MM-GKS, we assume 
the memory is sufficient 
to keep at most 40 search space vectors, and we achieve reconstructions of comparable quality with MM-GKS. We report these full-problem results as a benchmark to demonstrate that s-LM-MM-GKS, which processes the data in streaming fashion, achieves reconstruction quality close to the best achievable when all data is processes at once.
For scenarios where we cannot solve 
the full problem at once,
we observe that solving only the first 
subproblem produces relatively low quality reconstructions, as this corresponds to a limited angle tomography problem. For a more accurate reconstruction, we need information from other angles, which is provided
by subproblems \eqref{eq:rtomoi} and \eqref{eq:rtomont}. If memory is limited, s-LM-MM-GKS does so effectively by recycling the compressed solution space and approximate solution from one subproblem to the next. 
The effectiveness of this approach is demonstrated by the high quality reconstructions from the s-LM-MM-GKS method. 

Finally, we provide a comparison of the
convergence histories of the RRE between s-LM-MM-GKS and HyBR-recycle in Figure~\ref{Fig: sRMMGKSvsHyBRrecycle}. The blue line with square markers shows the RRE for s-LM-MM-GKS, and the red line with circle markers shows the RRE for HyBR-recycle. Both methods solve each subproblem for {$210$ expansion steps}, for a total of {$630$} iterations across the three subproblems.
{The green line with triangle markers shows the RRE for s-LM-MM-GKS with an adaptive stopping criterion, where each subproblem terminates when the relative difference between successive
approximations falls below $10^{-3}$. This variant achieves a comparable RRE using significantly fewer expansion steps.
We observe that s-LM-MM-GKS converges to a substantially lower RRE than HyBR-recycle, demonstrating the advantage of the edge-preserving $\ell_1$ regularization used in our method over the $\ell_2$ regularization employed by HyBR-recycle.}
\begin{figure}[h!]
\centering
\begin{tikzpicture}[font = \normalfont]

  \pgfplotstableread{Figures_rev/data/tomo_sLMMGKS.dat}\tablesLMMGKS
  \pgfplotstableread{Figures_rev/data/tomo_HyBRrec.dat}\tableHyBRrec
  \pgfplotstableread{Figures_rev/data/tomo_sLMMGKS_tol.dat}\tablesLMMGKStol

  \begin{axis}[%
    name                   = convplot,
    width                  = 0.990\textwidth,
    height                 = 8.5cm,
    ymode                  = log,
    xmin                   = 0,
    xmax                   = 620,
    xlabel                 = {Iteration},
    ylabel                 = {RRE},
    log ticks with fixed point,
    grid                   = major,
    legend pos             = north east,
    legend cell align      = {left},
    legend style           = {font=\scriptsize},
  ]

    \addplot[solid, thick, color=blue!70!black, mark=square*, mark repeat = 60]
      table[x = Iteration, y = RRE] {\tablesLMMGKS};
    \addlegendentry{s-LM-MM-GKS}

    \addplot[dashed, thick, color=red!70!black, mark=*, mark repeat = 60]
      table[x = Iteration, y = RRE] {\tableHyBRrec};
    \addlegendentry{HyBR-recycle}

    \addplot[dashdotted, thick, color=green!60!black, mark=triangle*, mark repeat = 24]
      table[x = Iteration, y = RRE] {\tablesLMMGKStol};
    \addlegendentry{s-LM-MM-GKS ($\mathrm{tol} = 10^{-3}$)
}
\end{axis}
\end{tikzpicture}
\caption{{Test 1: Streaming tomography example ($\sigma = 0.1\%$). RRE convergence history for three numerical experiments: s-LM-MM-GKS and HyBR-recycle for a fixed number of expansion steps per subproblem, and s-LM-MM-GKS with a relative difference stopping criterion ($\mathrm{tol} = 10^{-3}$).}}
\label{Fig: sRMMGKSvsHyBRrecycle}
\end{figure}

\subsubsection{Test 2}
{We consider a larger parallel tomography problem, generated using IRTools \cite{gazzola2018ir}, with a $1000 \times 1000$ Shepp-Logan phantom and $180$
projection angles at $1^{\circ}$ intervals.
This produces a forward operator $\bA \in \R^{254520 \times 1000^2}$ and a data vector $\bd \in \R^{254520}$. We add $0.1\%$ white Gaussian noise to $\bd$.
We assume insufficient memory to process the data 
all at once. 
Therefore, 
the $180$ projection angles are randomly permuted and divided into $6$ groups of $30$ angles each, which are processed successively.
This yields $6$ subproblems with $\bA_1, \ldots, \bA_6 \in \R^{42420 \times 1000^2}$ and $\bd_1, \ldots, \bd_6 \in \R^{42420}$.
The true image is shown in Figure~\ref{fig: test2_true}(a). The sinograms for the $6$ subproblems are shown in Figure~\ref{fig: test2_true}(b)-(g).}

\begin{figure}[htbp]
\centering
\begin{minipage}[c]{0.22\textwidth}
\centering
$\bx_{\rm true}$ \\[2pt]
\includegraphics[angle=90, width=\textwidth]{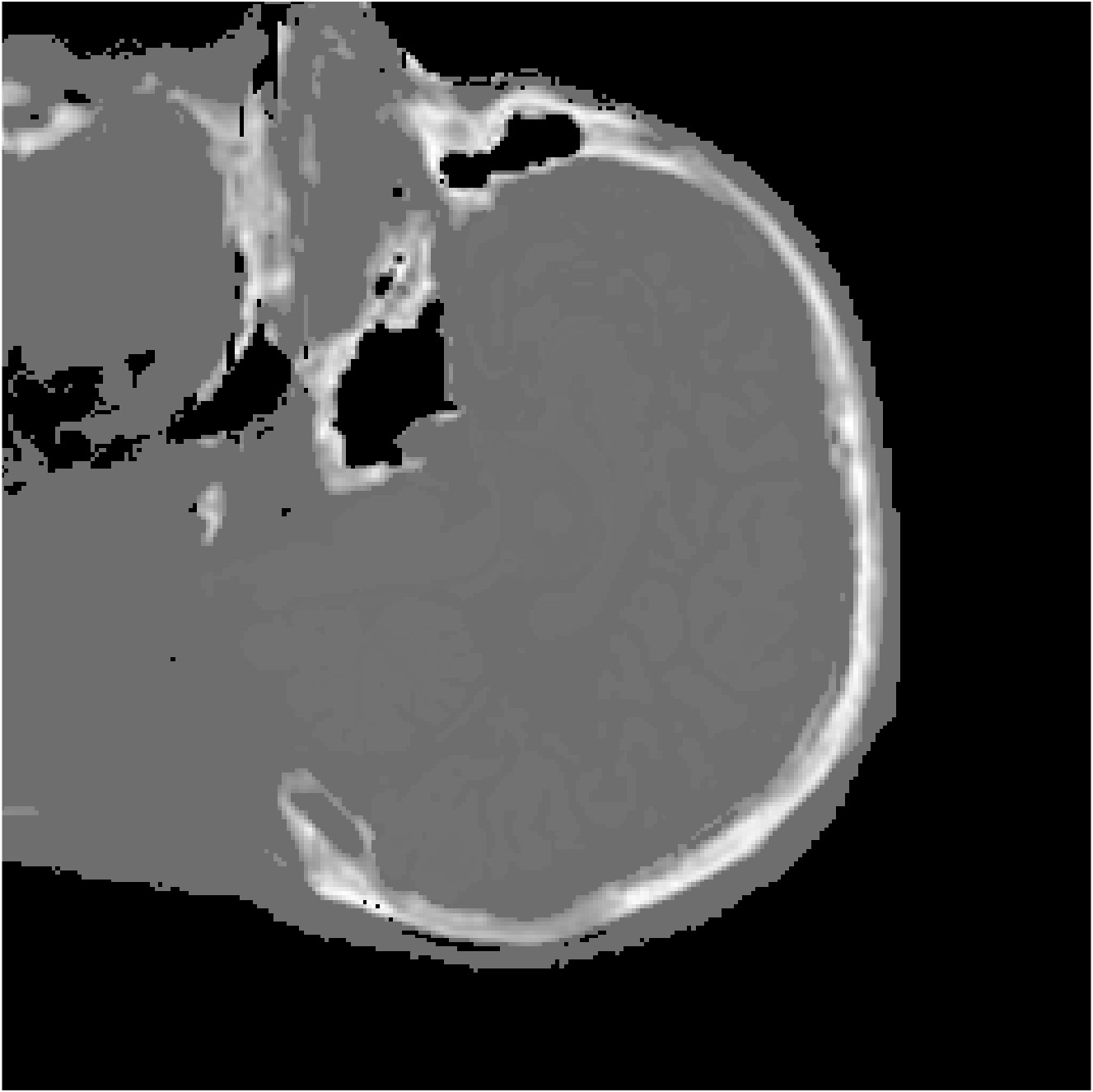} \\
(a)
\end{minipage}%
\hfill
\begin{minipage}[c]{0.73\textwidth}
\centering
\begin{tabular}{ccc}
$\bd_1$ & $\bd_2$ & $\bd_3$ \\
\includegraphics[height = 0.08\textwidth, width = .22\textwidth]{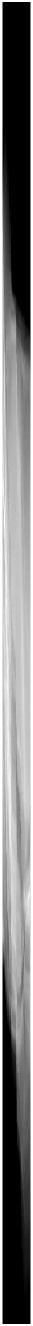} &
\includegraphics[height = 0.08\textwidth, width = .22\textwidth]{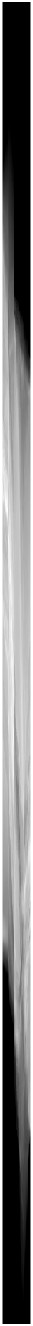} &
\includegraphics[height = 0.08\textwidth, width = .22\textwidth]{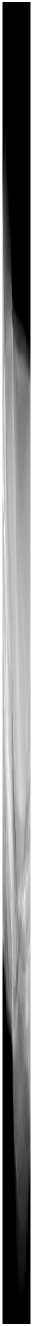} \\
(b) & (c) & (d) \\[4pt]
$\bd_4$ & $\bd_5$ & $\bd_6$ \\
\includegraphics[height = 0.08\textwidth, width = .22\textwidth]{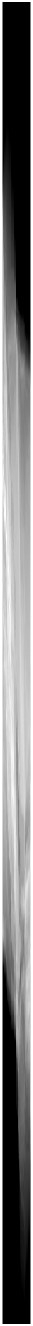} &
\includegraphics[height = 0.08\textwidth, width = .22\textwidth]{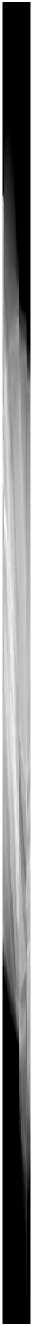} &
\includegraphics[height = 0.08\textwidth, width = .22\textwidth]{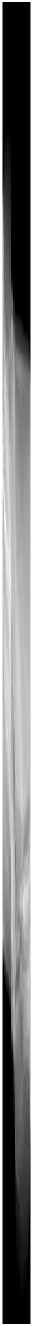} \\
(e) & (f) & (g)
\end{tabular}
\end{minipage}
\caption{{Test 2: Streaming tomography example. The $1000 \times 1000$ pixels true image is provided in (a). The sinograms $\bd_1, \ldots, \bd_6$ for the $6$ randomly formed subproblems are shown in (b)--(g).}}
\label{fig: test2_true}
\end{figure}
This example focuses 
on a severely limited memory case, where we assume that we have 
only enough memory to process 30 basis vectors. To mimic this, we run MM-GKS type methods and we report their result for only $30$ expansion iterations.
LM-MM-GKS and s-LM-MM-GKS keep the memory for the solution subspace bounded as well, but the enlarging and compressing approach allows us to run for a larger number of iterations. We set $k_{\max} = 30$ and $k_{\min} = 10$.
We compare the performance of LM-MM-GKS on streaming data by considering the following scenarios.
\begin{enumerate}
    \item Run MM-GKS on the first subproblem (MM-GKS 1st). Also run MM-GKS on all the data, i.e., solve \eqref{eq:rtomoproblemall} with $n_t = 6$ (MM-GKS all) for $30$ iterations.
    \item Run LM-MM-GKS on the first subproblem (LM-MM-GKS 1st) and LM-MM-GKS on the full data problem (LM-MM-GKS all) for 200 expansion steps.
    \item Run s-LM-MM-GKS method first for a fixed number of iterations (here we run a fixed total of $360$ expansion steps). Then, we consider the case of stopping the iterations based on a stopping criterion. In particular, we stop if either the maximum number of iterations ($100$ for each subproblem) or a desired tolerance is achieved (we set $tol_1 = 10^{-3}$).
\end{enumerate}

\begin{figure}[ht!]
\centering
  \begin{tabular}{ccccc}
   MM-GKS 1st &  LM-MM-GKS 1st &  MM-GKS all & LM-MM-GKS all & s-LM-MM-GKS \\
  \includegraphics[angle=90,height = 0.14\textwidth, width = .16\textwidth]{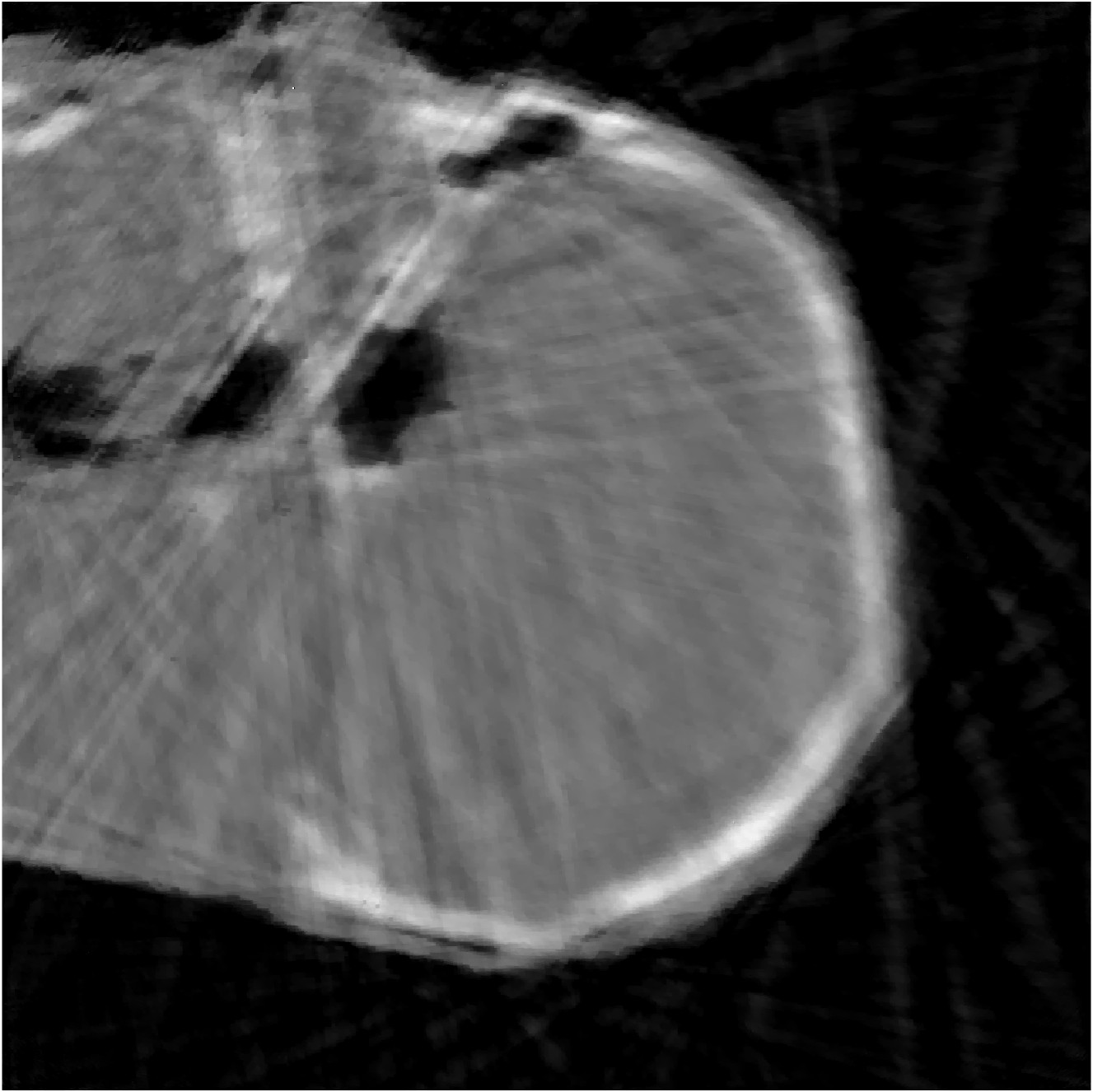} &
  \includegraphics[angle=90,height = 0.14\textwidth, width = .16\textwidth]{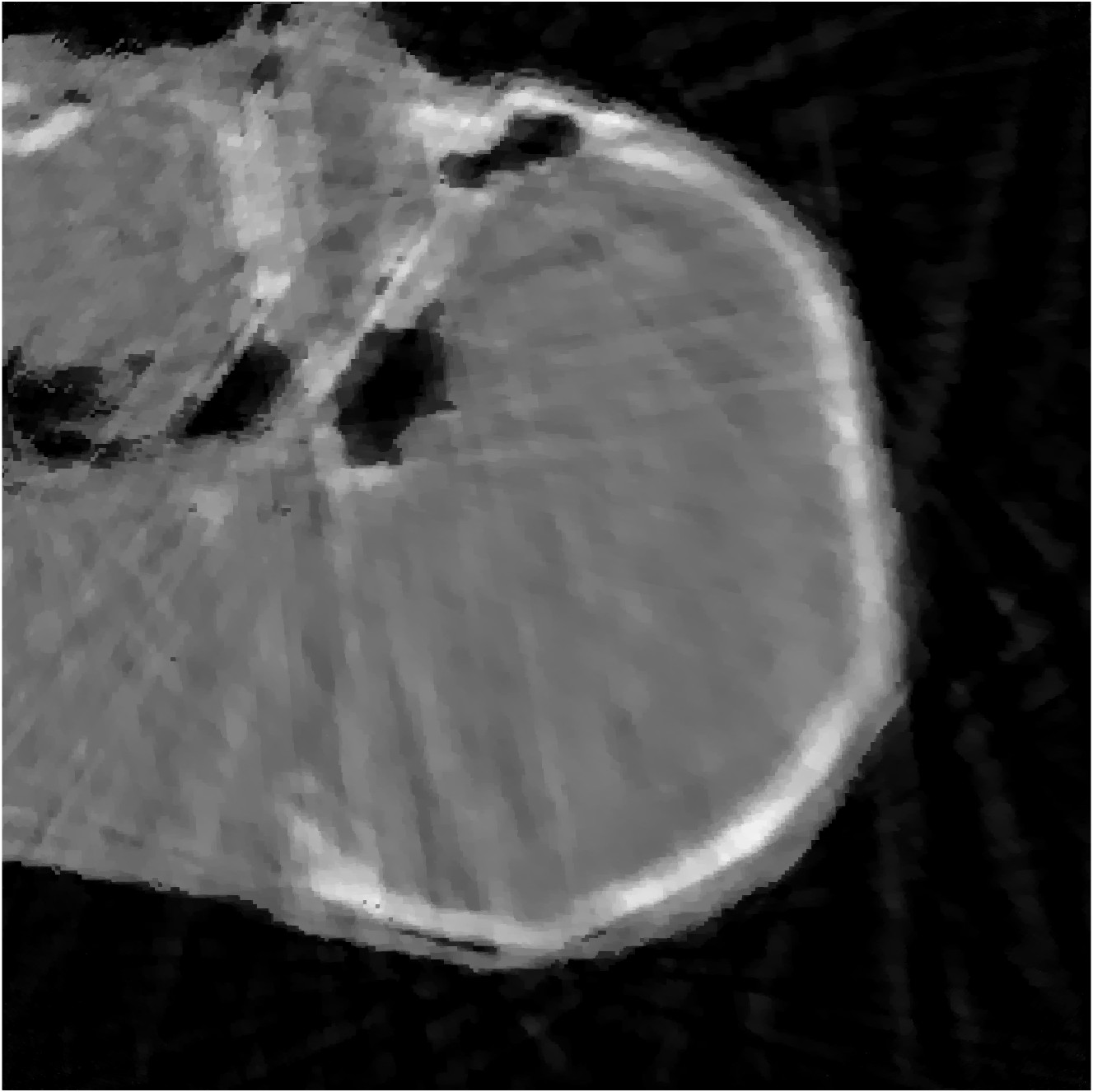} &
  \includegraphics[angle=90,height = 0.14\textwidth, width = .16\textwidth]{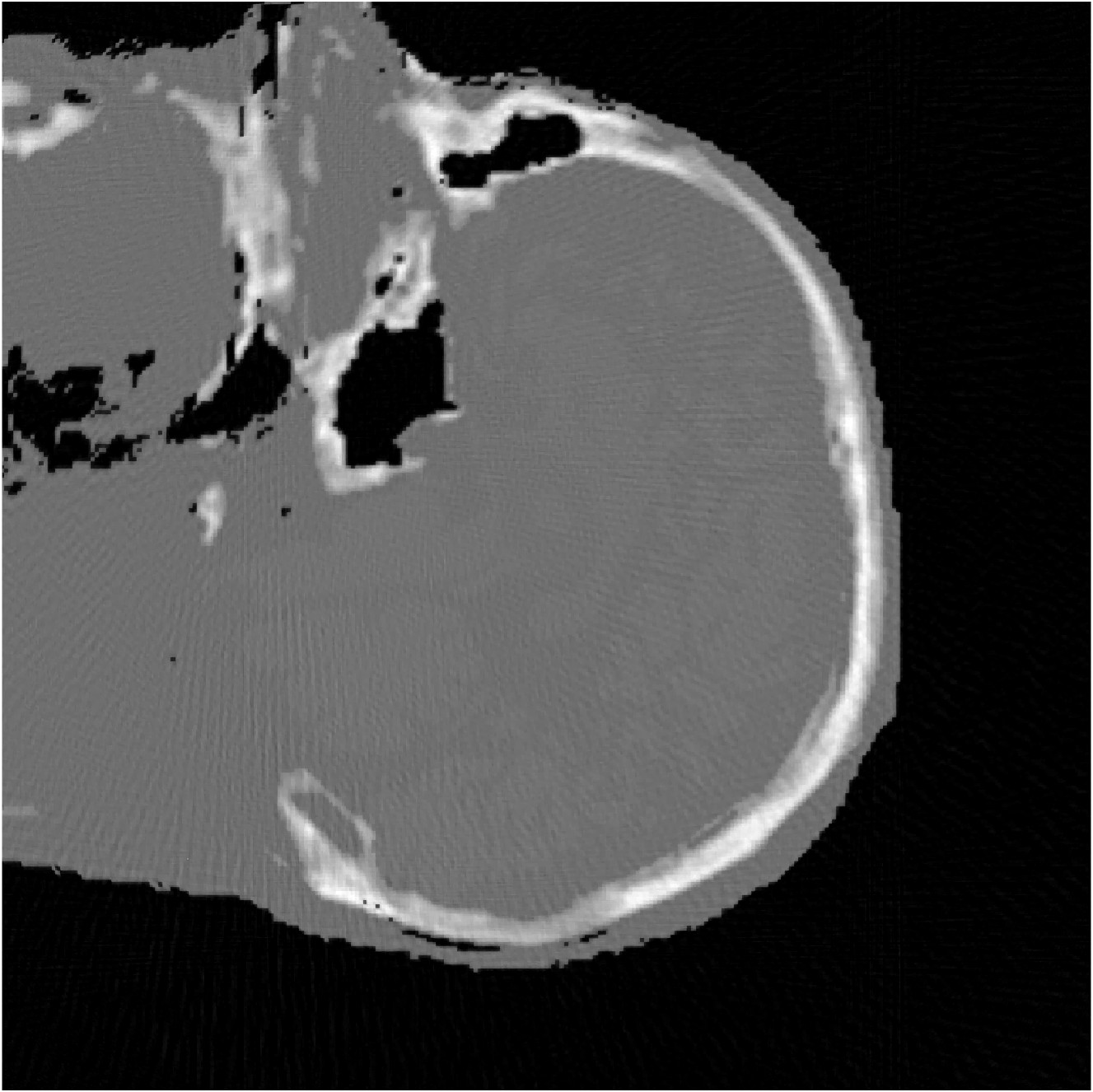} &
  \includegraphics[angle=90,height = 0.14\textwidth, width = .16\textwidth]{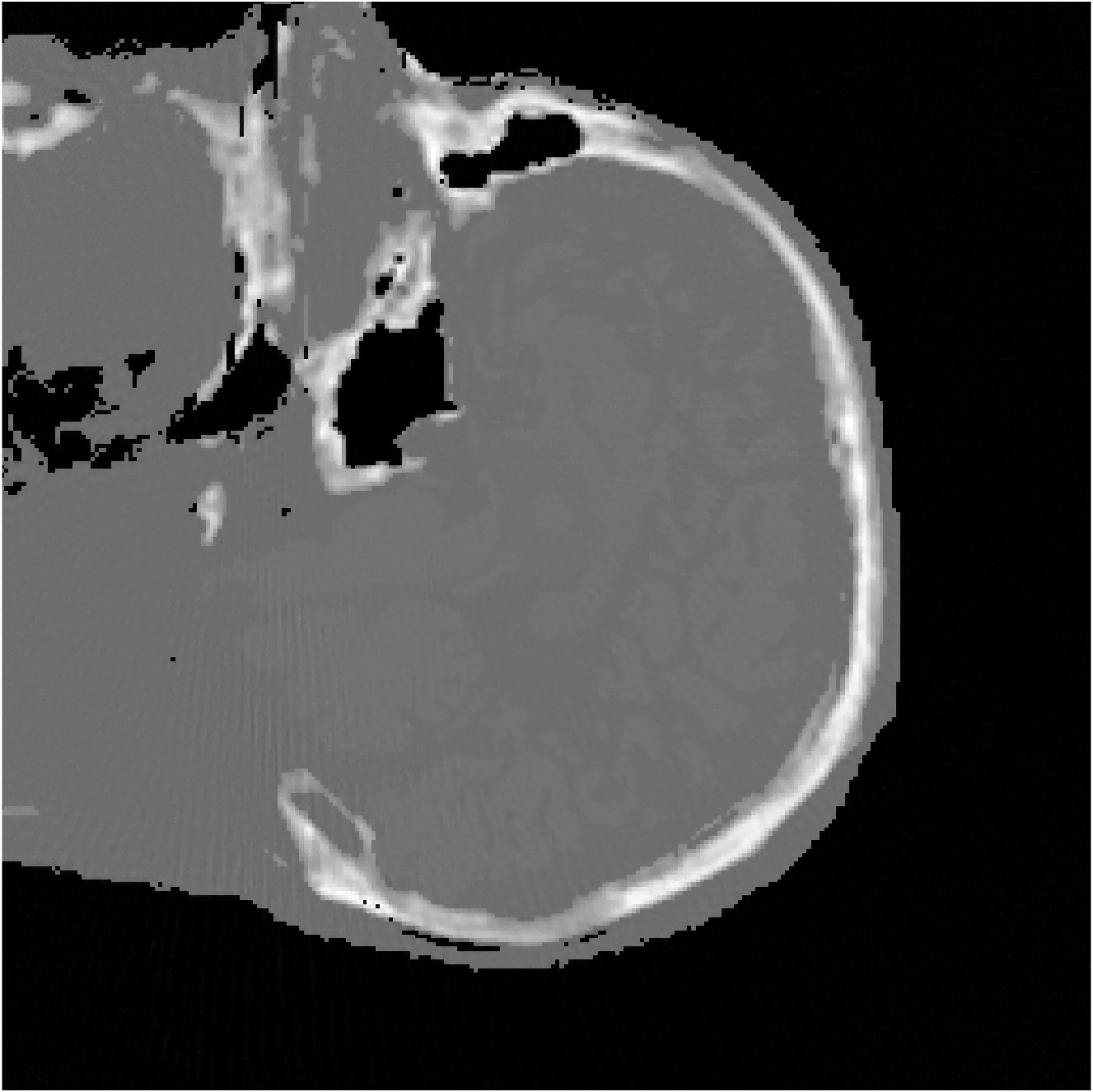} &
  \includegraphics[angle=90,height = 0.14\textwidth, width = .16\textwidth]{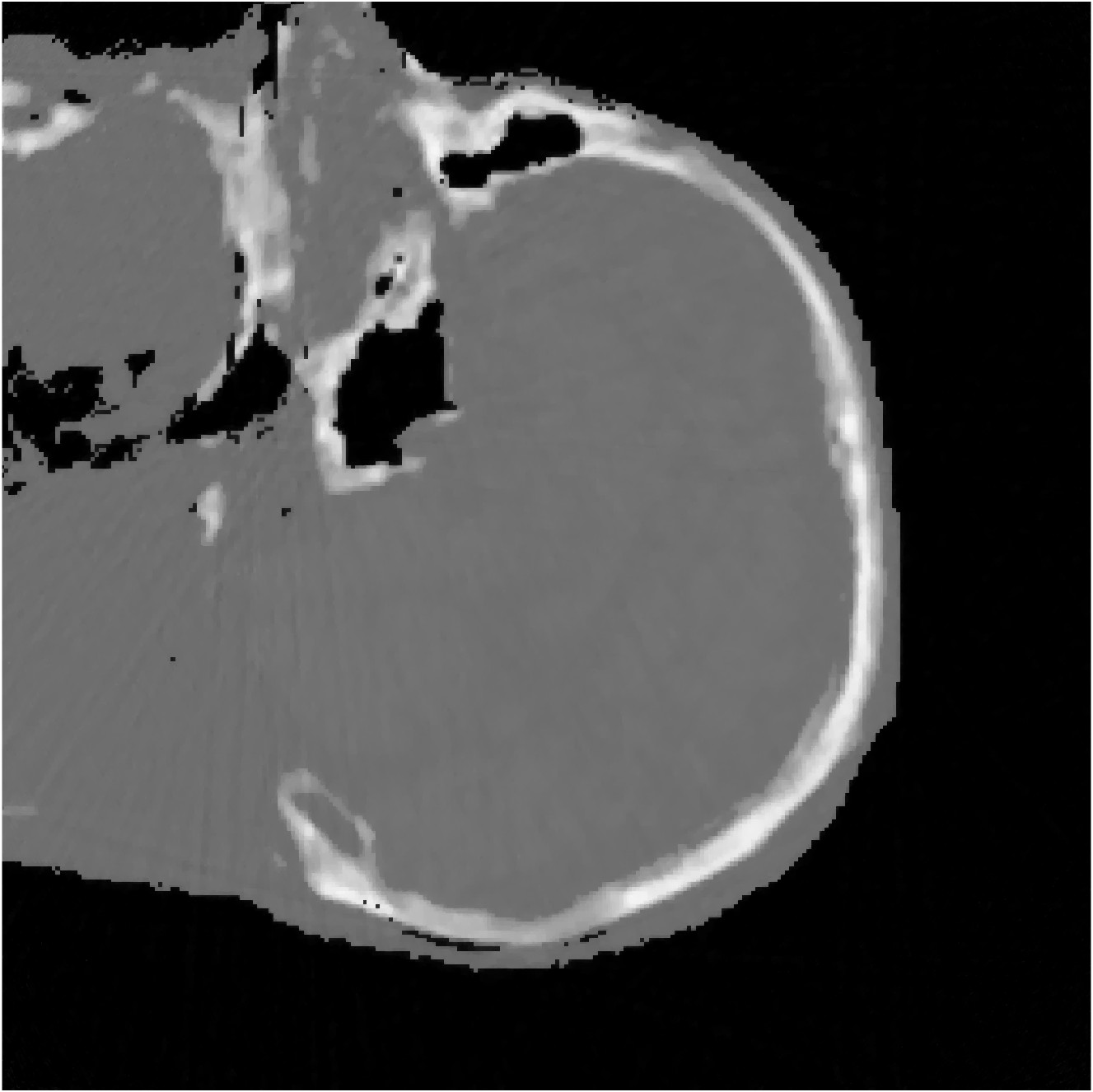}
\end{tabular}
\begin{tabular}{ccccc}
  \includegraphics[angle=90,height = 0.14\textwidth, width = .16\textwidth]{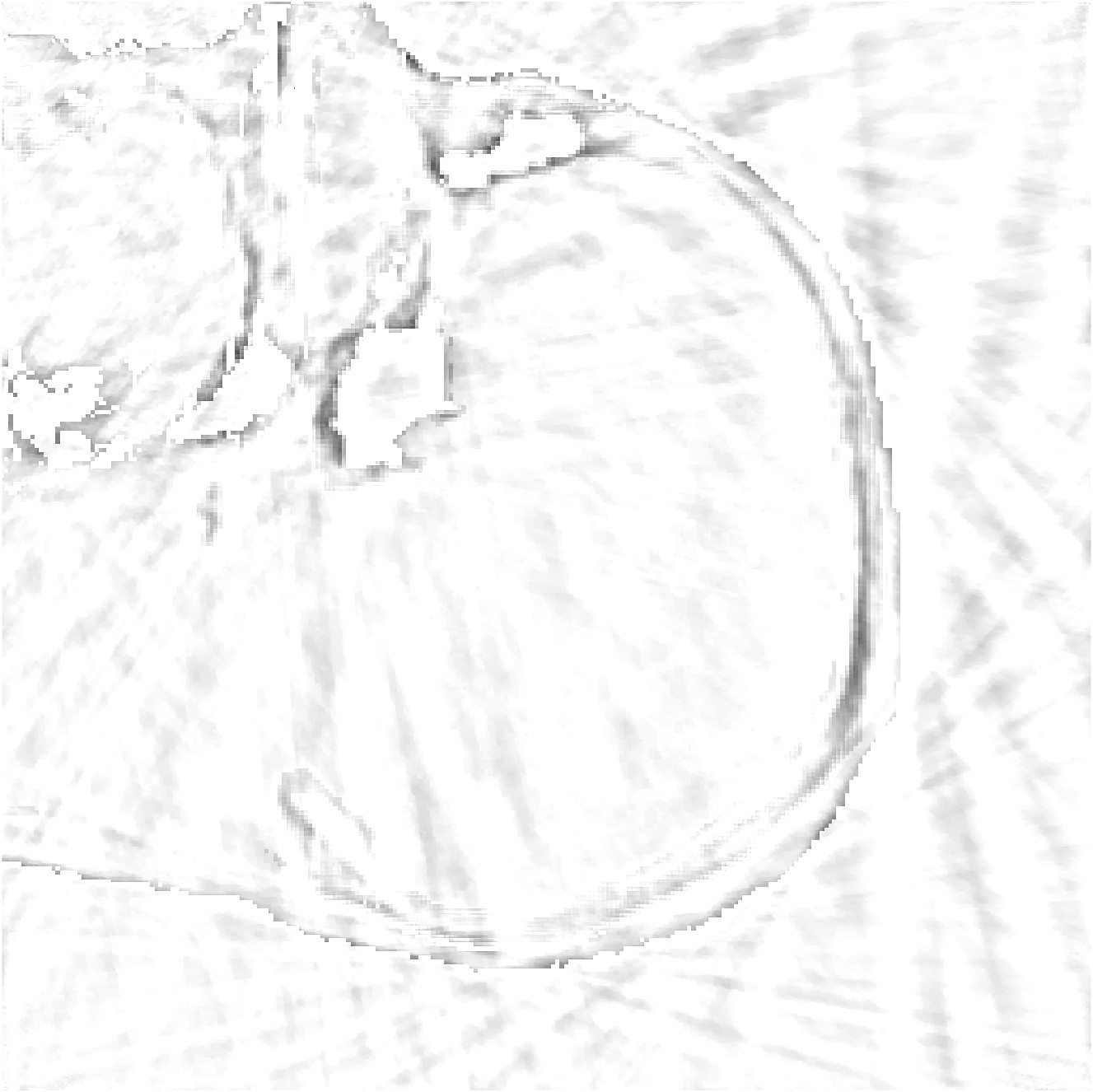} &
  \includegraphics[angle=90,height = 0.14\textwidth, width = .16\textwidth]{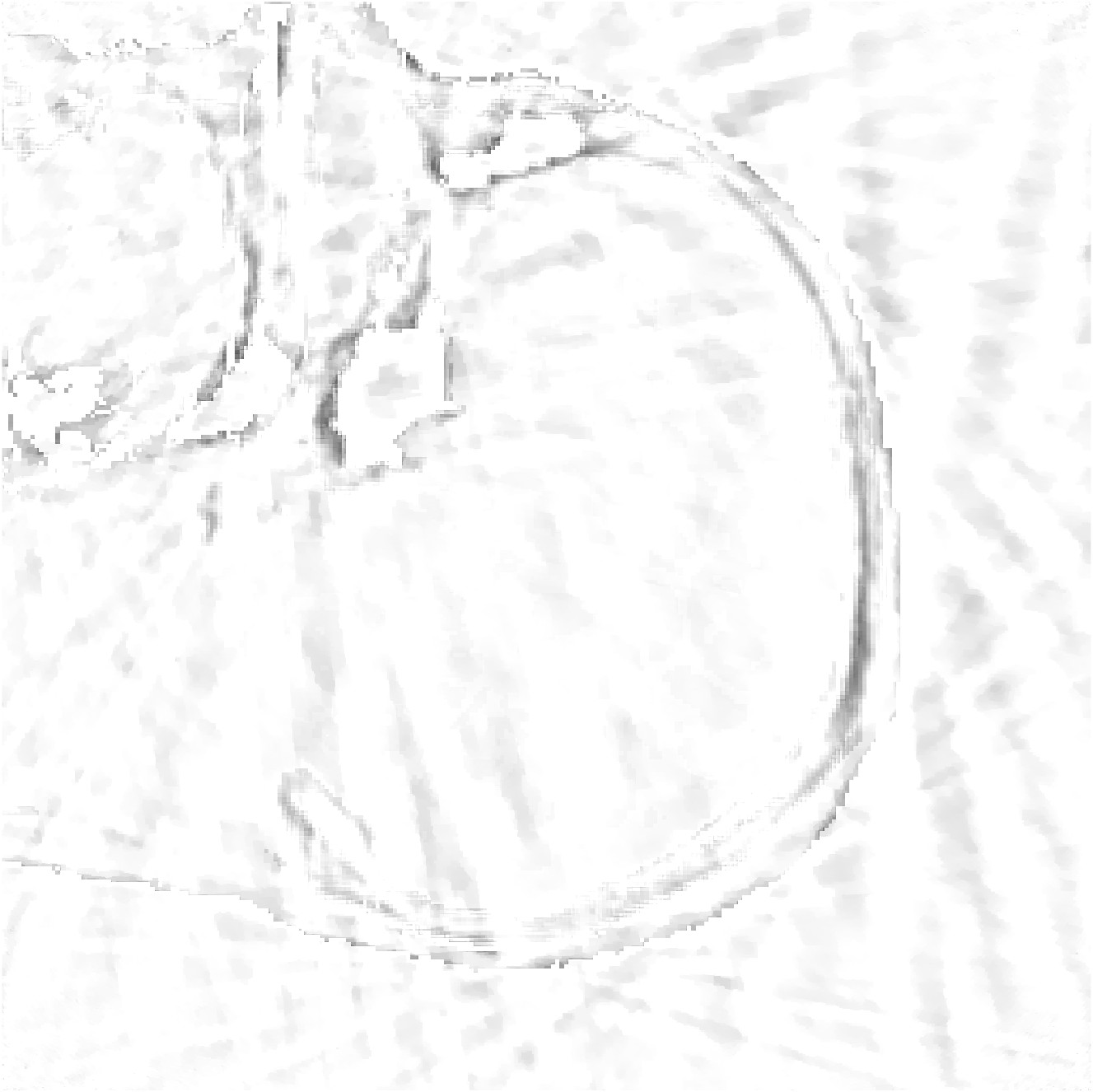} &
  \includegraphics[angle=90,height = 0.14\textwidth, width = .16\textwidth]{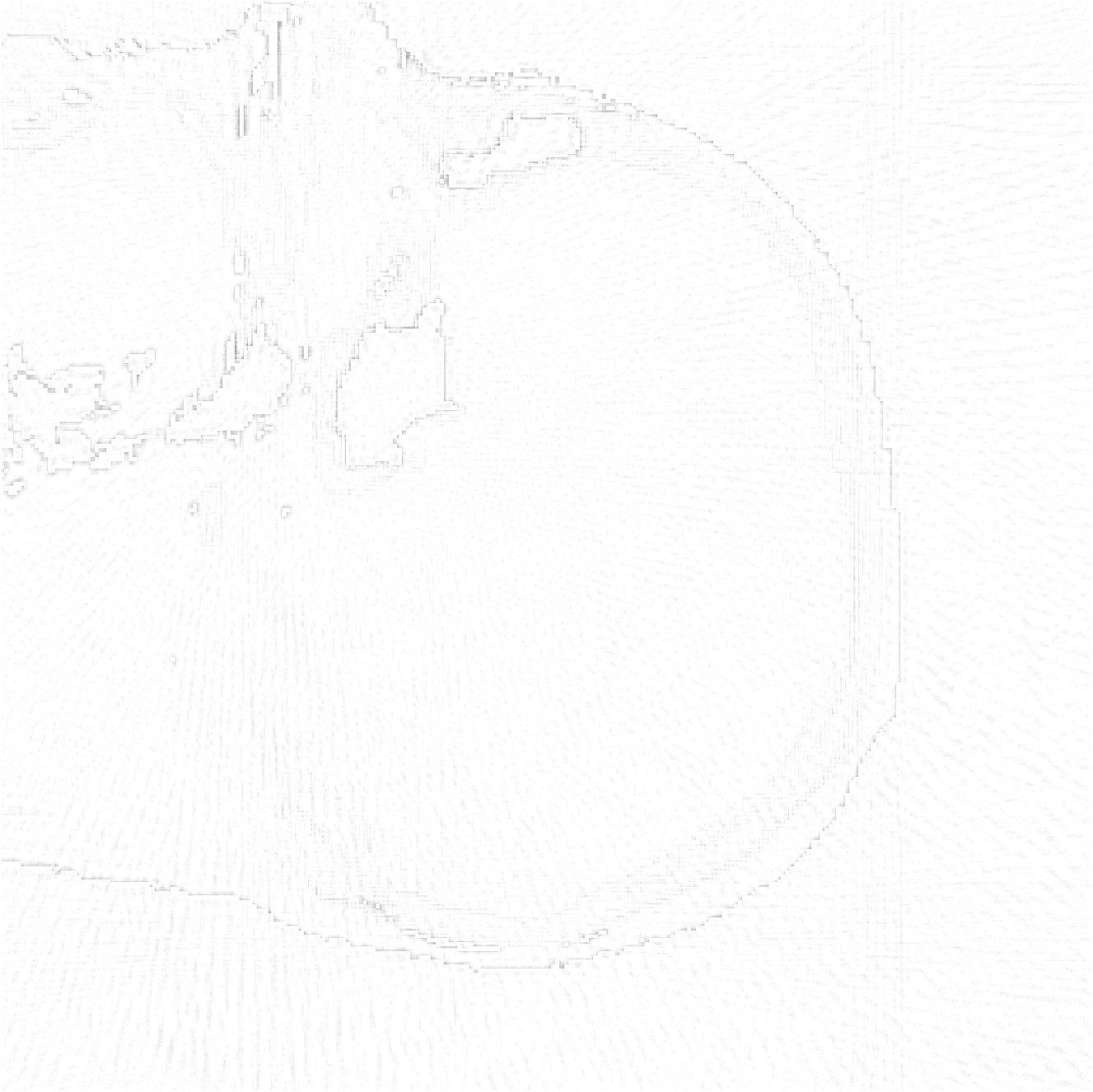} &
  \includegraphics[angle=90,height = 0.14\textwidth, width = .16\textwidth]{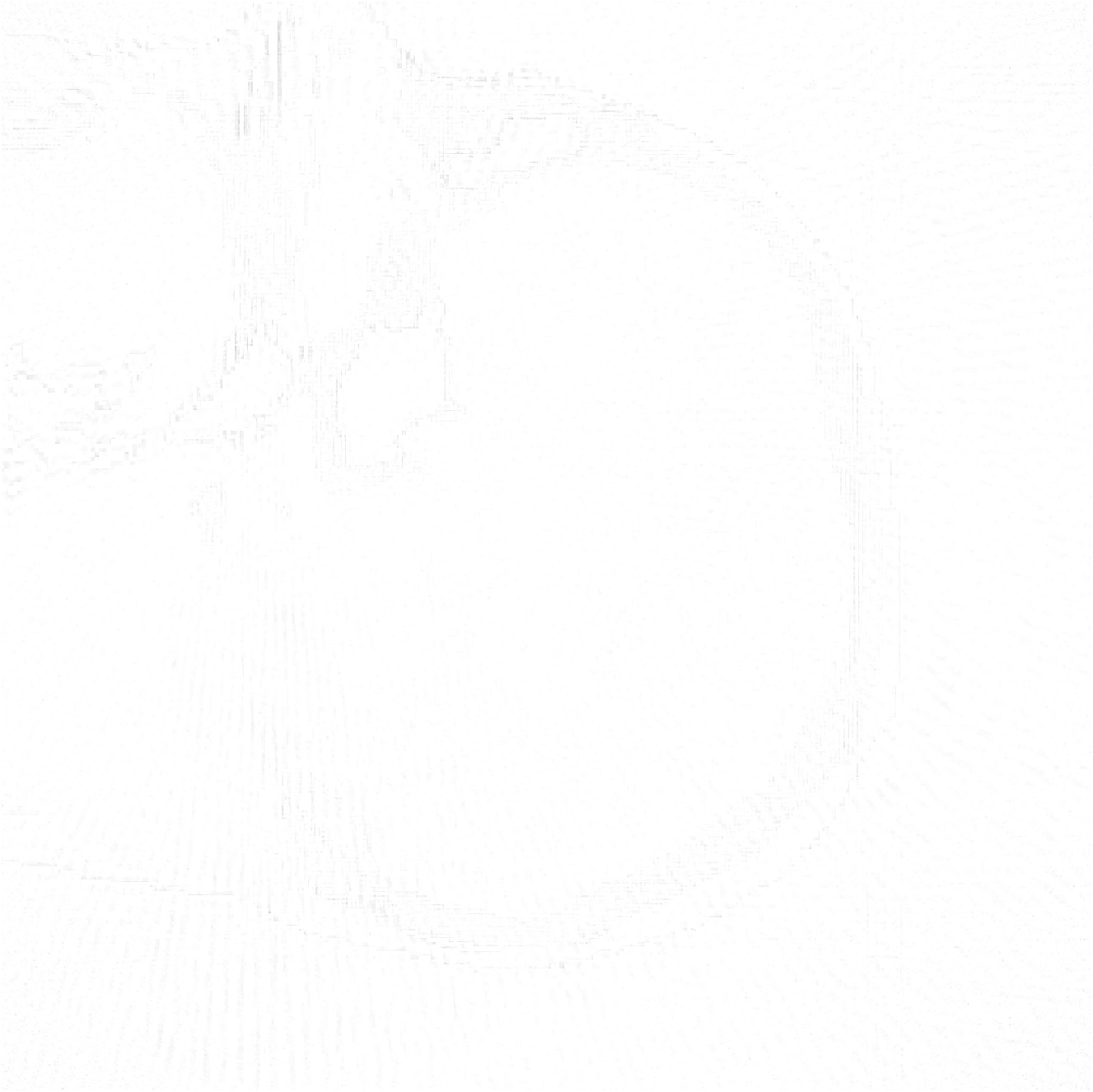} &
  \includegraphics[angle=90,height = 0.14\textwidth, width = .16\textwidth]{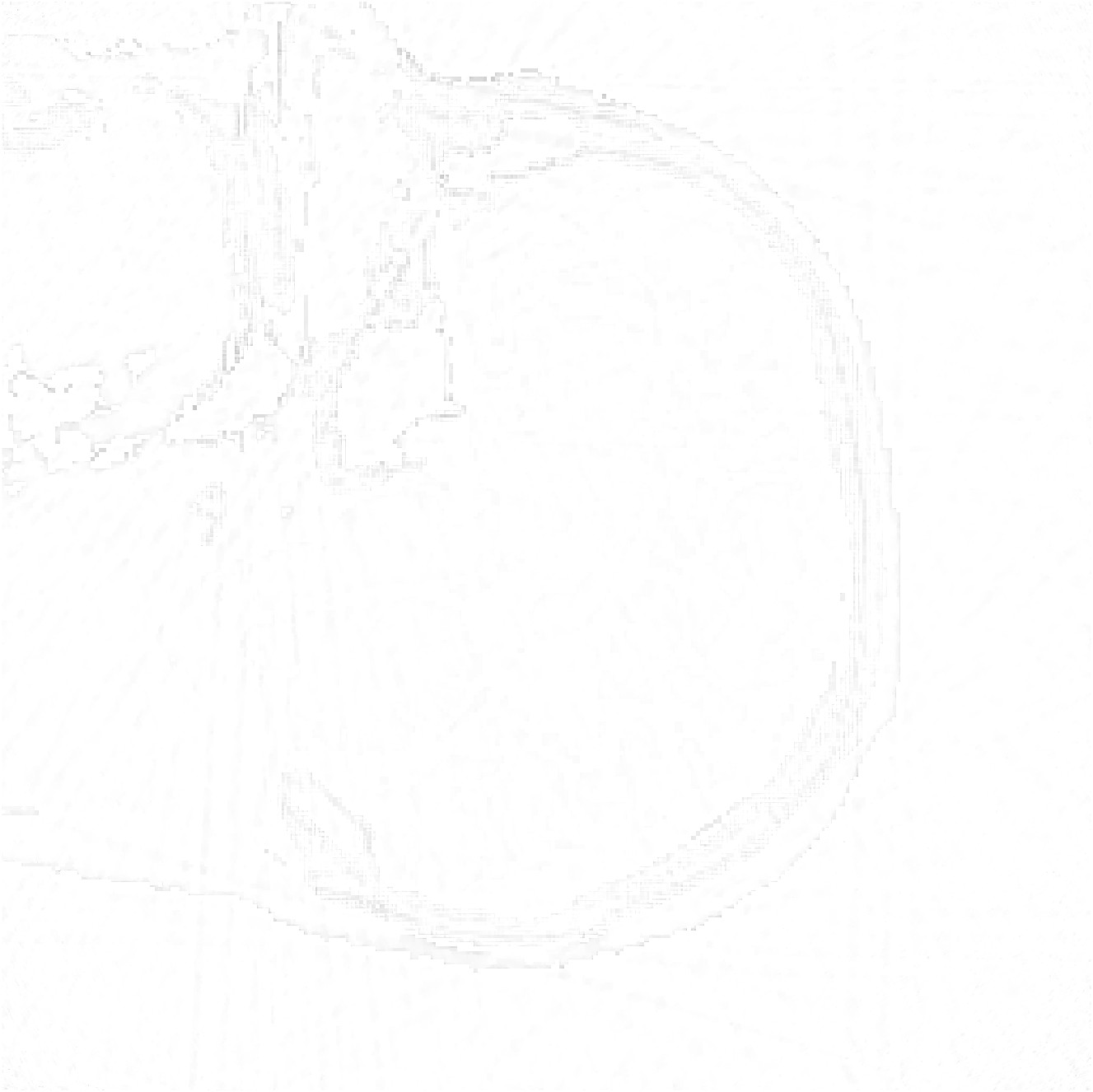}
\end{tabular}
\caption{Test 2: Streaming tomography example ($1000 \times 1000$, $\sigma = 0.1\%$). Reconstructions for MM-GKS and LM-MM-GKS on the limited data from the first dataset and on all the data, from left to right, respectively. The last image shows the reconstruction using s-LM-MM-GKS. The second row displays the corresponding error images.}
\label{fig: test2_reconstructions}
\end{figure} 

\begin{table}[th!]
\centering
\small
\setlength{\tabcolsep}{3.5pt}
\begin{tabular}{@{} c cc cc cc @{}}
\toprule
 & \multicolumn{2}{c}{Single subproblem}
 & \multicolumn{2}{c}{Full problem}
 & \multicolumn{2}{c}{Streaming} \\
\cmidrule(lr){2-3} \cmidrule(lr){4-5} \cmidrule(lr){6-7}
$\sigma$
 & MM-GKS\,1st & LM-MM-GKS\,1st
 & MM-GKS\,all & LM-MM-GKS\,all
 & s-LM-MM-GKS & s-LM-MM-GKS (tol) \\
\midrule
0.1\% & 0.1906 & 0.1690
      & 0.0633 & 0.0275
      & 0.0424 & 0.0635 \\[2pt]
0.5\% & 0.1967 & 0.1898
      & 0.0963 & 0.0993
      & 0.0748 & 0.0856 \\
\bottomrule
\end{tabular}
\caption{Test 2: Streaming CT ($1000\times 1000$, $6$ random subproblems). RRE for noise levels $0.1\%$ and $0.5\%$. Memory limit $k_{\max} = 30$, $k_{\min} = 10$.}
\label{Table: test2_RRE}
\end{table}

{The RREs for the methods we consider are shown in Table~\ref{Table: test2_RRE}. For low noise ($\sigma = 0.1\%$), LM-MM-GKS applied to all the data achieves the best RRE of $0.0275$ with only $k_{\max} = 30$ stored basis vectors. At higher noise ($\sigma = 0.5\%$), s-LM-MM-GKS achieves the best RRE of $0.0748$, outperforming both MM-GKS and LM-MM-GKS on the full problem. The reconstructed images with all the methods along with the error images in the inverted colormap are displayed in Figure~\ref{fig: test2_reconstructions}.}
\subsection{Dynamic photoacoustic tomography (dPAT)}
\label{subsec:DynPAT}
Photoacoustic tomography
(PAT) is an emerging hybrid imaging modality that shows great potential for pre-clinical research. 
PAT combines the rich contrast of optical imaging with the high resolution of ultrasound imaging. It promises high resolution images with lower cost and fewer side effects than other imaging modalities. 
We consider a discrete dPAT problem, where we let $\bx_{i}\in \R^{n}$, with $n = n_x\cdot n_y$, be the (vectorized) true image at time step $i$, for $i = 1, 2, \dots, n_t$. This yields a dynamic model. PAT has been considered in a 
dynamic framework before; see, for instance,  \cite{chung2017motion,chung2018efficient,lucka2018enhancing}. 
At timestep $i$, measurements $\bd_i$ are taken at a set of transducers
surrounding the object of interest (represented here by the images $\bx_i$)
at $n_a$ angles and for 
$n_r$ radii at each transducer, leading to 
a discrete forward operator 
$\bA_{i} \in \R^{n_a\cdot n_r \times n}$
and the system,  
\begin{equation}\label{eq: lin_eq_spherical}
\bd_{i} = \bA_{i} \bx_{i} + \be_i,
\end{equation}
with $\be_i$ being Gaussian noise. 
The goal is to estimate approximations 
to the images $\bx_{i}$ given the observations $\bd_i$.

\begin{figure}[ht!]
\centering
\begin{tabular}{cccccc}
     $t =1$ & \qquad  $t =10$ & \qquad  $t =20$ &   \qquad  $t =30$ & \qquad $t =40$ &  \qquad $t =50$  \\
\end{tabular}
\begin{tabular}{cccccc}
\includegraphics[height = 0.13\textwidth, width = .13\textwidth]{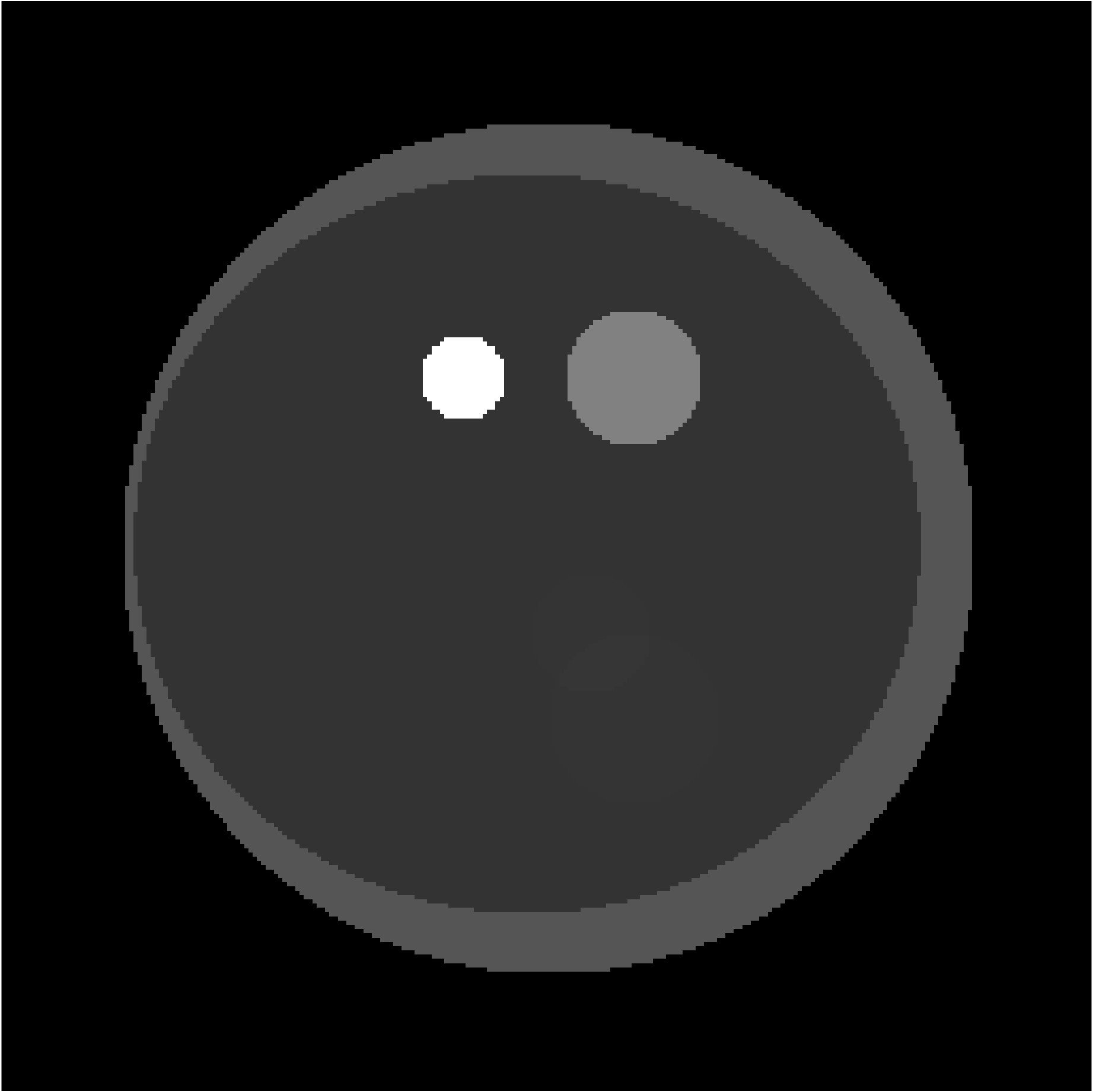} &
\includegraphics[height = 0.13\textwidth, width = .13\textwidth]{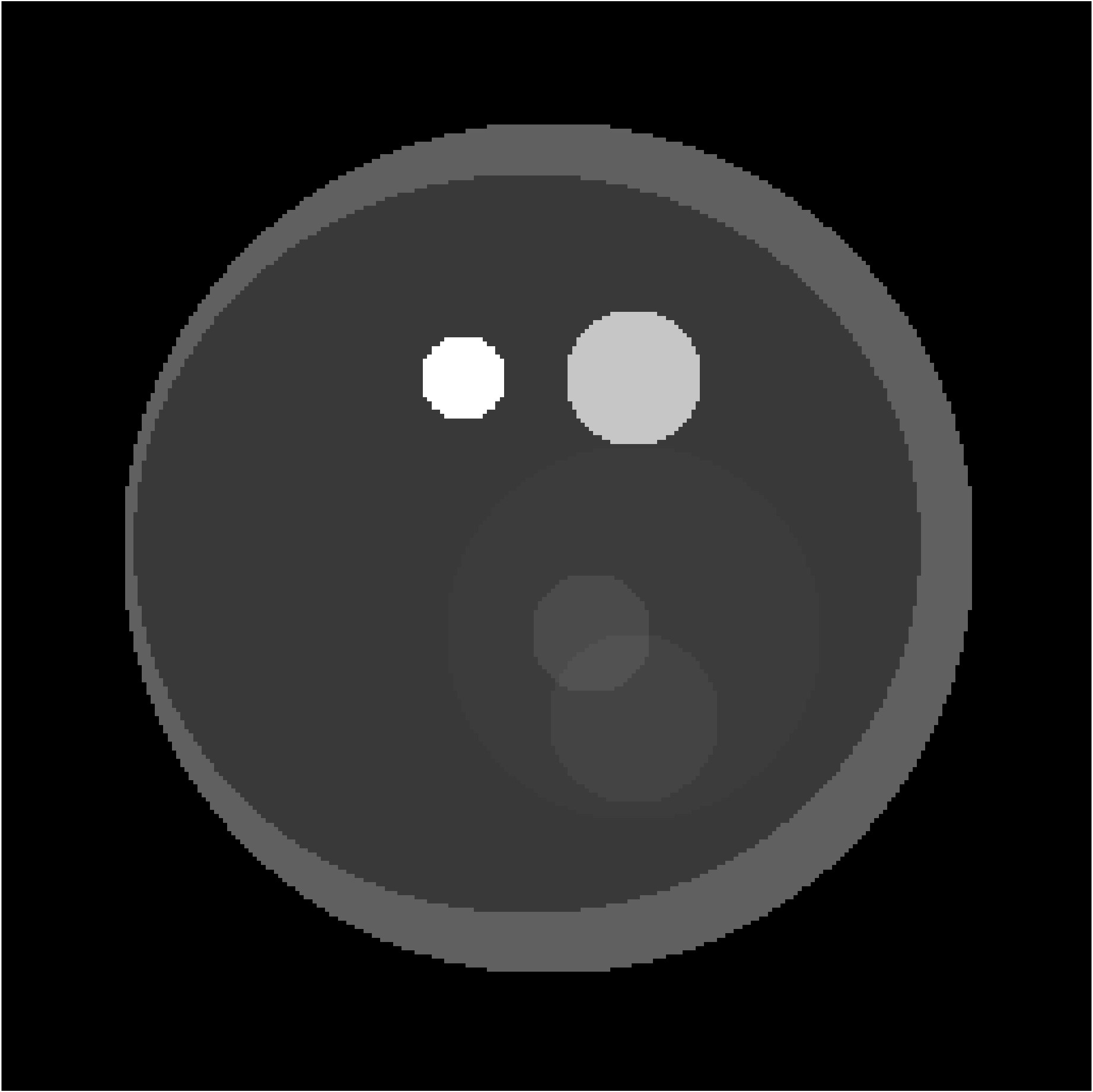} &
\includegraphics[height = 0.13\textwidth, width = .13\textwidth]{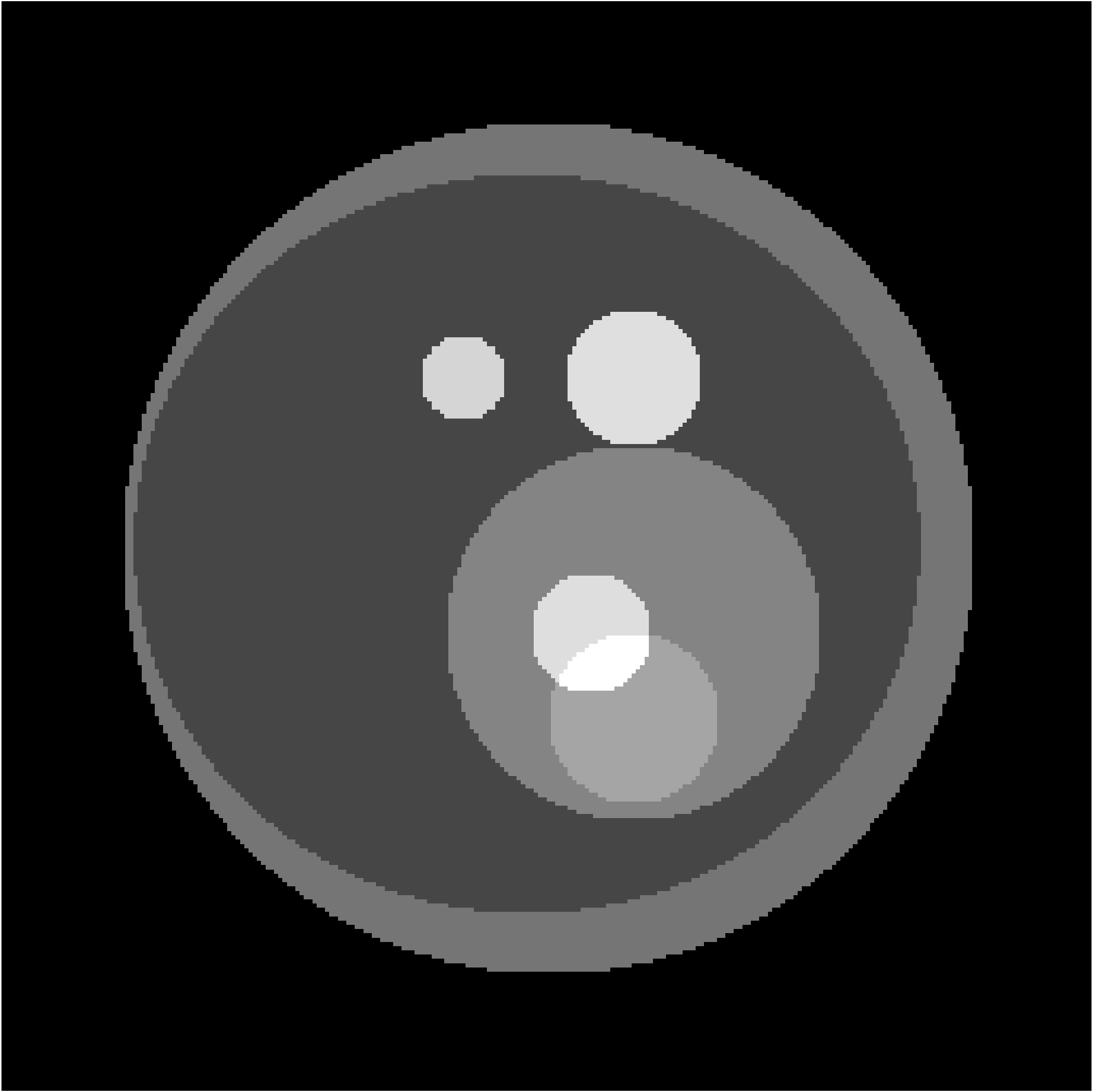}& \includegraphics[height = 0.13\textwidth, width = .13\textwidth]{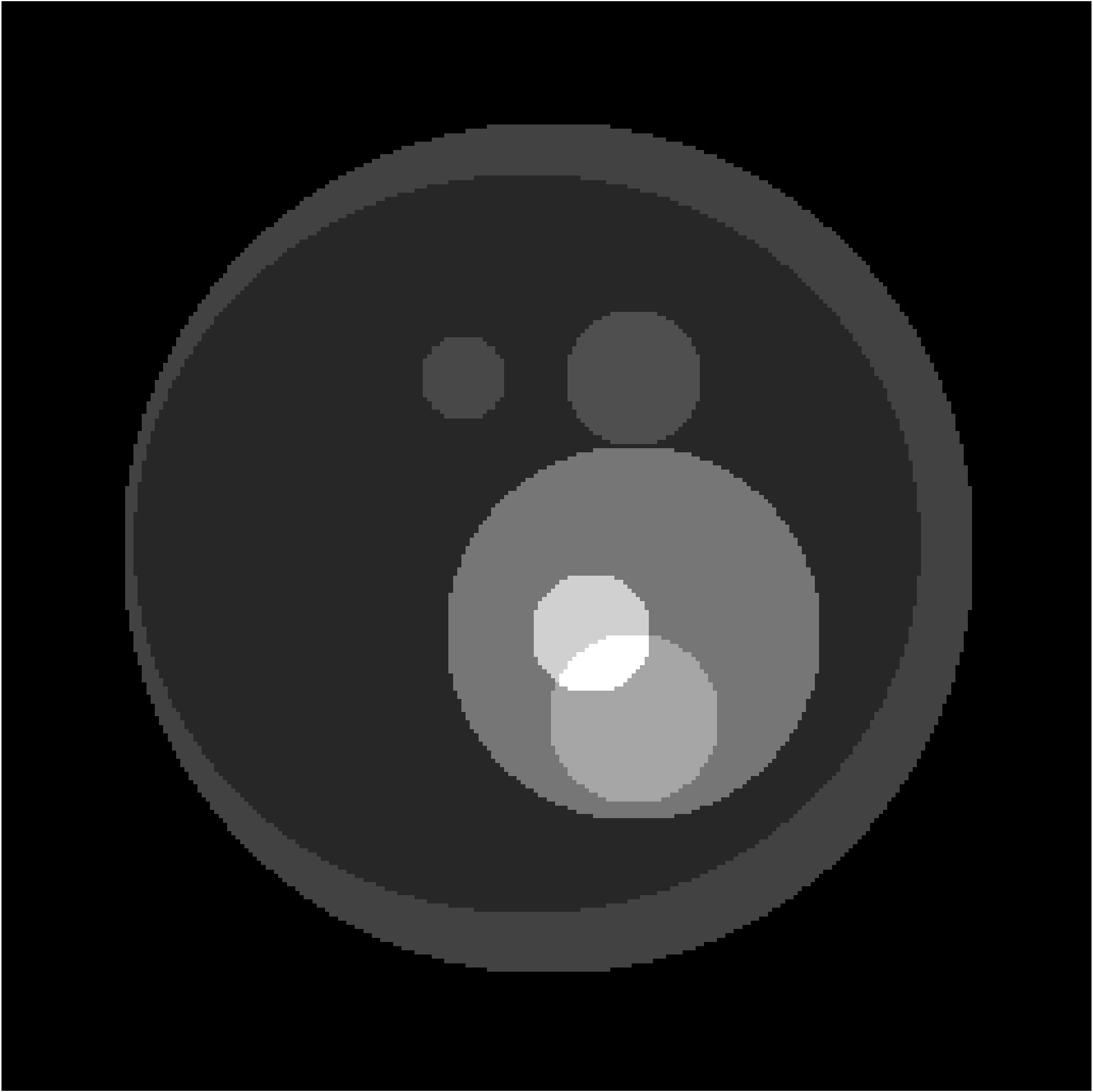} &
\includegraphics[height = 0.13\textwidth, width = .13\textwidth]{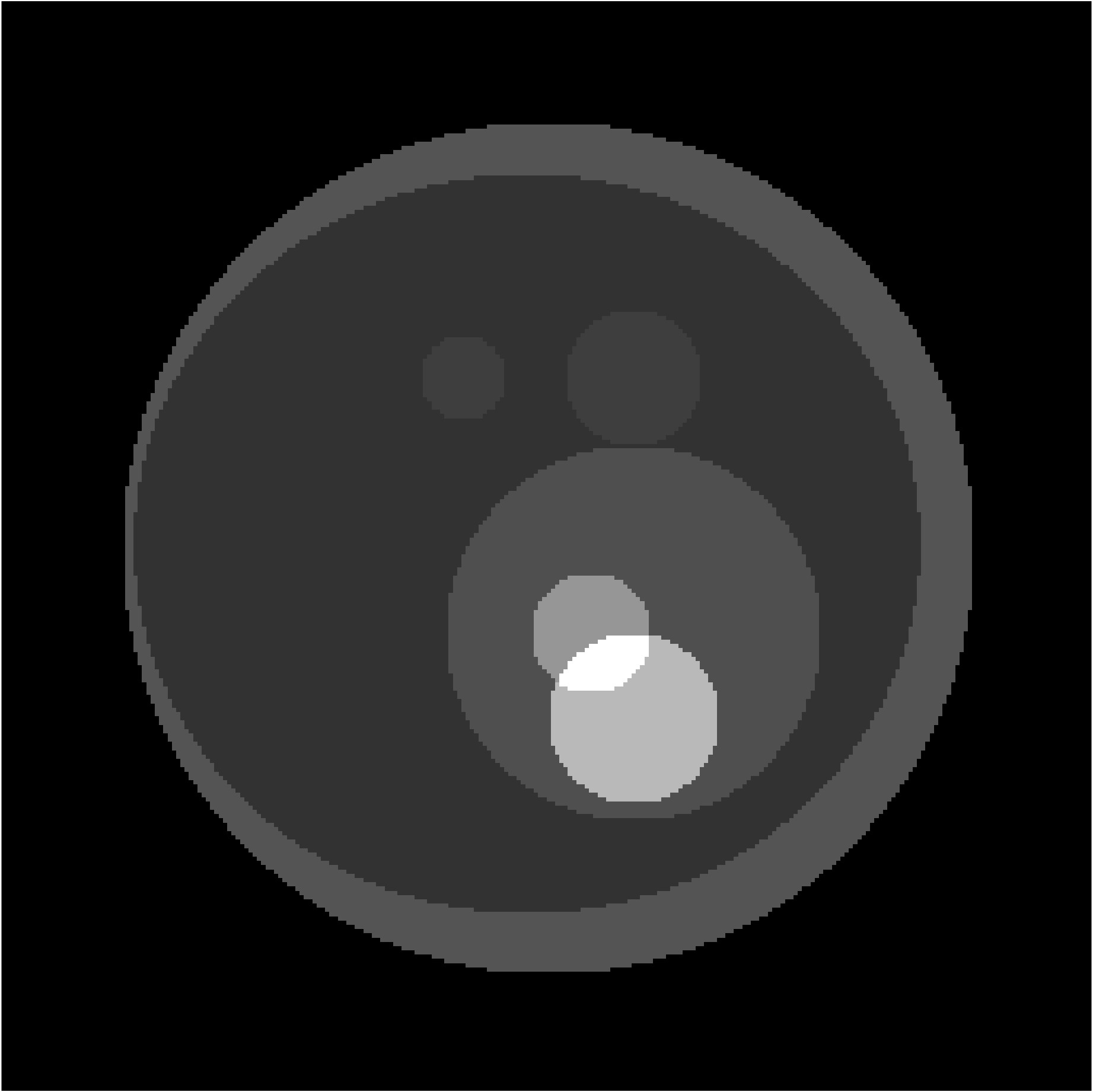}& 
\includegraphics[height = 0.13\textwidth, width = .13\textwidth]{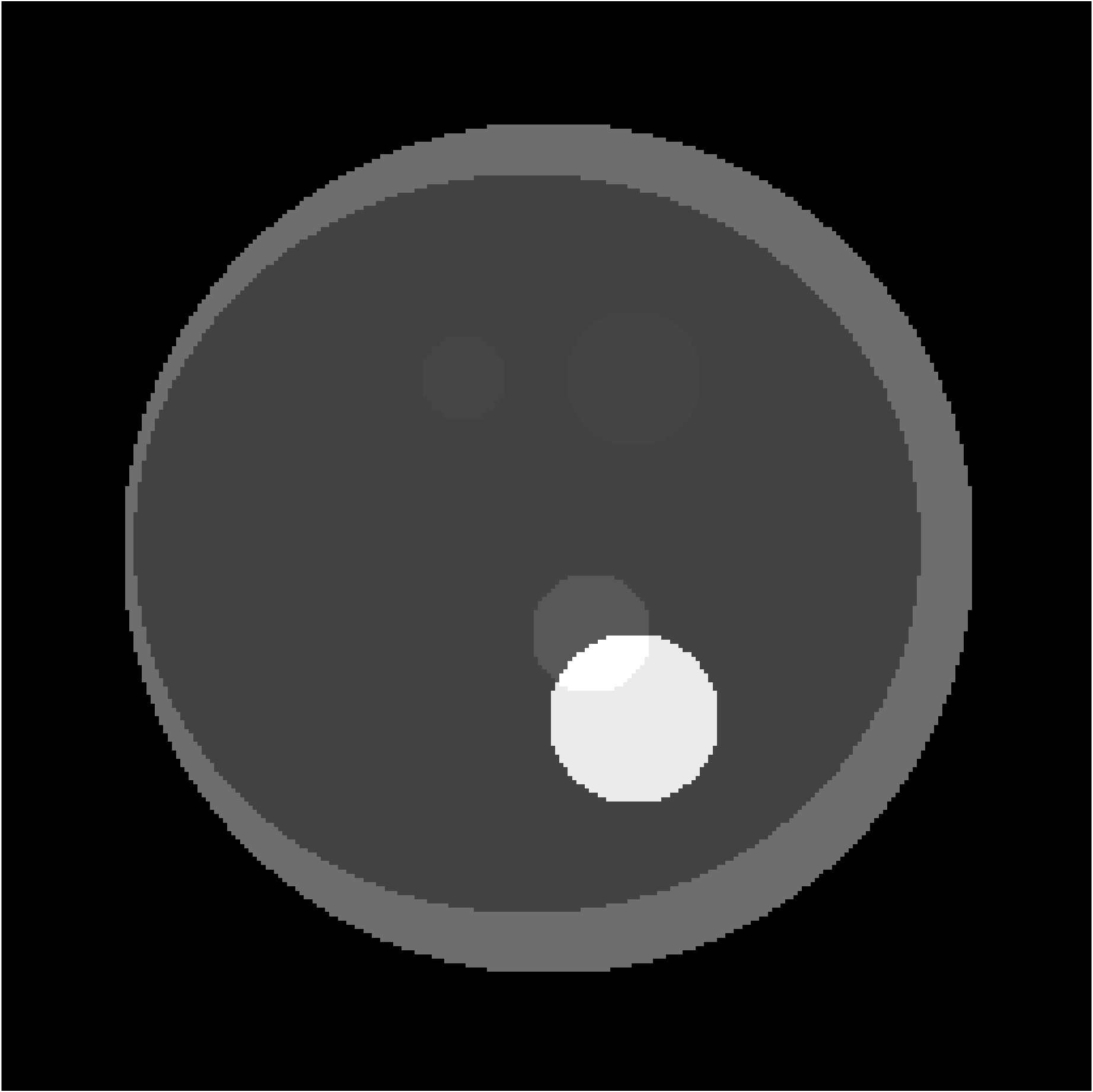}
\end{tabular}
\begin{tabular}{cccccc}
\includegraphics[height = 0.13\textwidth, width = .13\textwidth]{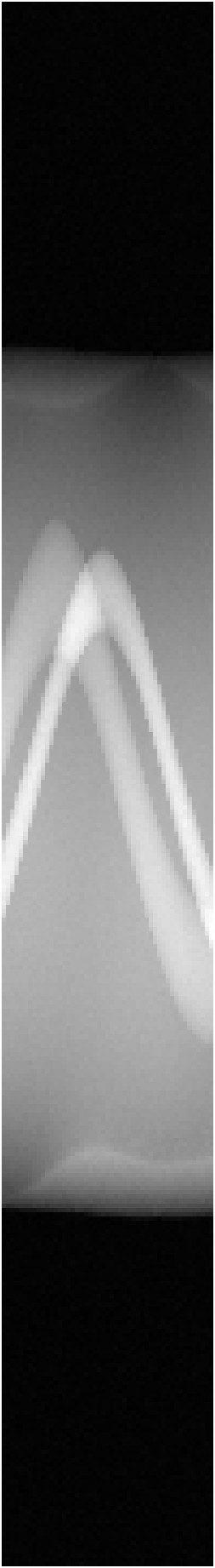} &
\includegraphics[height = 0.13\textwidth, width = .13\textwidth]{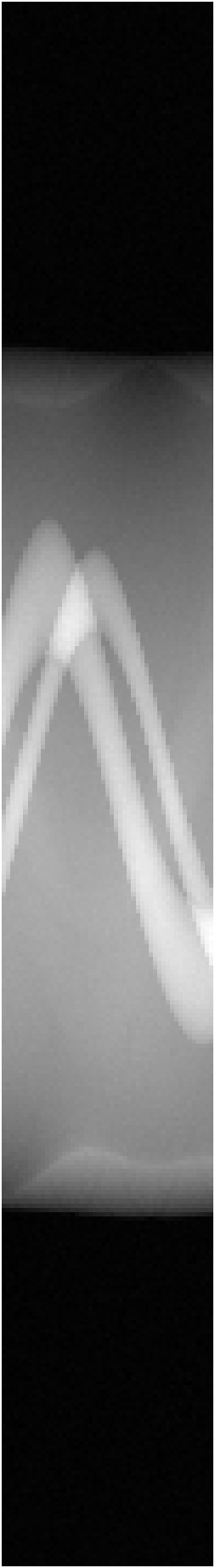} &   
\includegraphics[height = 0.13\textwidth, width = .13\textwidth]{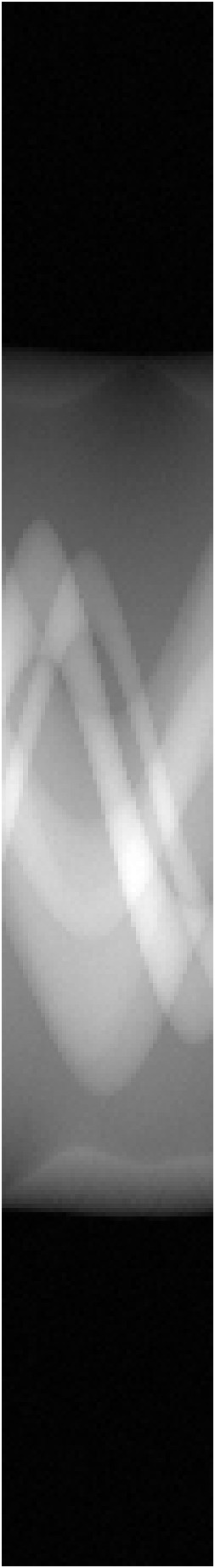} &
\includegraphics[height = 0.13\textwidth, width = .13\textwidth]{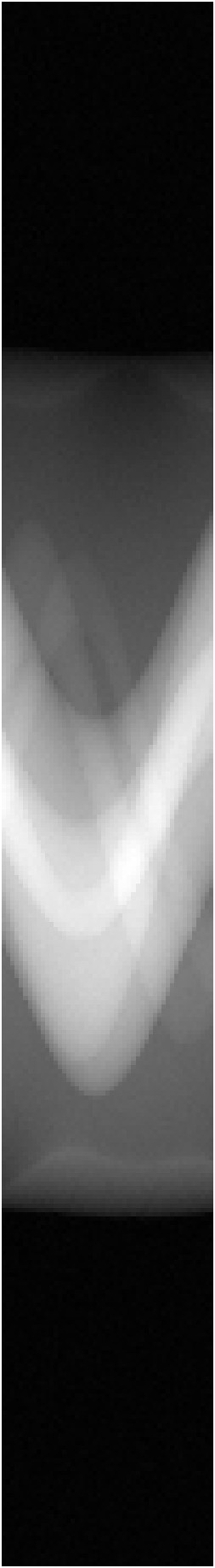} &
\includegraphics[height = 0.13\textwidth, width = .13\textwidth]{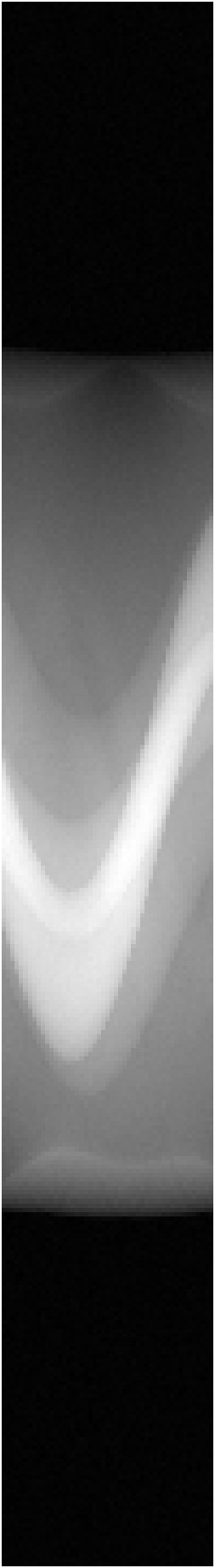} &
\includegraphics[height = 0.13\textwidth, width = .13\textwidth]{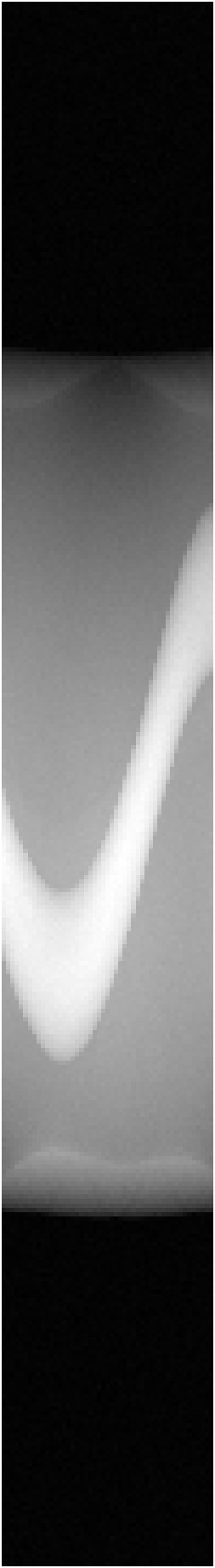} 
\end{tabular}
\caption{Dynamic PAT test problem. First row: true images of size $256\times 256$
at time steps $1, 10, 20, 30, 40, 50$. Second row: sinograms of size $362 \times 49$ at the same time steps. }
\label{Fig: PATtrueandsino}
\end{figure}

\paragraph{Comparison of LM-MM-GKS with MM-GKS and a restarted MM-GKS}
In this example, we consider a problem with 
$n_t = 50$ time steps. Each image is $256\times 256$ pixels, and it is composed of the superposition of six circles. 
A sample of the true phantoms, at time steps $i = 1, 10, 20, 30, 40, 50$, is given in the first row of Figure \ref{Fig: PATtrueandsino}. 
At time step $i$, measurements are taken at 
transducers placed at 49 equidistant angles with 
$5^{\circ}$ degree intervals
from $i^{\circ}$ to $(240+i)^{\circ}$. Hence, 
the set of angles at time step $i$ 
is obtained by shifting the previous set by 
$1^{\circ}$. At each transducer, we obtain measurements for $n_r = 362$ radii.
The measurements, $\bd_i \in \R^{362 \cdot 49}$, also known as sinograms, contain identically distributed and independent Gaussian noise with a noise level of $1\%$. 
A sample of sinograms at time steps $t = 1,10, 20, 30, 40, 50$ is shown in the second row of Figure \ref{Fig: PATtrueandsino}. The complete 
data vector, $\bd$, combining $50$ time steps, contains  
886,900 measurements, i.e., $\bd\in \R^{886,900}$. Note that the problem is severely underdetermined. Given $\bd$, the goal is to reconstruct an approximate solution, 
combining 50 time steps,
with 
$3,276,800$ pixels. The full problem can be 
stated as follows 
\begin{equation}\label{eq: blockF}
\underbrace{
\begin{bmatrix} \bA_{1} 
\\ &  \ddots\\
& & \bA_{n_t} 
\end{bmatrix}}_{\bA}
\underbrace{\begin{bmatrix}
    \bx_1\\
    \vdots\\
    \bx_{n_t}
\end{bmatrix}}_{\bx}
+
\underbrace{\begin{bmatrix}
    \be_1\\
    \vdots\\
    \be_{n_t}
\end{bmatrix}}_{\be}
=
\underbrace{
\begin{bmatrix}
    \bd_1\\
    \vdots\\
    \bd_{n_t}
\end{bmatrix}}
_{\bd}.
\end{equation}
Regularization leads to the large-scale coupled minimization problem 
\begin{equation}
    \min_{\bx \in \R^{n_x\cdot n_y \cdot n_t}} \|\bA\bx - \bd\|_2^2 + \lambda\|\Psi\bx\|_1,
\quad \text{with} \quad \Psi  = \begin{bmatrix} \bI_{n_t}\otimes\bI_{n_y}\otimes\bL_{x} \\  
\bI_{n_t}\otimes \bL_{y}\otimes\bI_{n_x} \\
\bL_{t}\otimes\bI_{n_y}\otimes\bI_{n_x} 
\end{bmatrix}, 
\end{equation}
where $\bL_d$ represents the discrete first derivative operator 
in the $d$-direction, for $d = x$ (vertical), $d=y$ (horizontal), and $d=t$ (time), resulting in $\Psi \in \R^{9,764,864  \times  3,276,800
}$. 
Other ways to define $\Psi$ for dynamic edge-preserving inverse problems can be found in \cite{pasha2021efficient} and,
for the Bayesian framework, in \cite{lan2023spatiotemporal}.

\begin{figure}[h!]
	\centering
	\begin{minipage}{0.15\textwidth}
		\centering
		\includegraphics[width=1\textwidth]{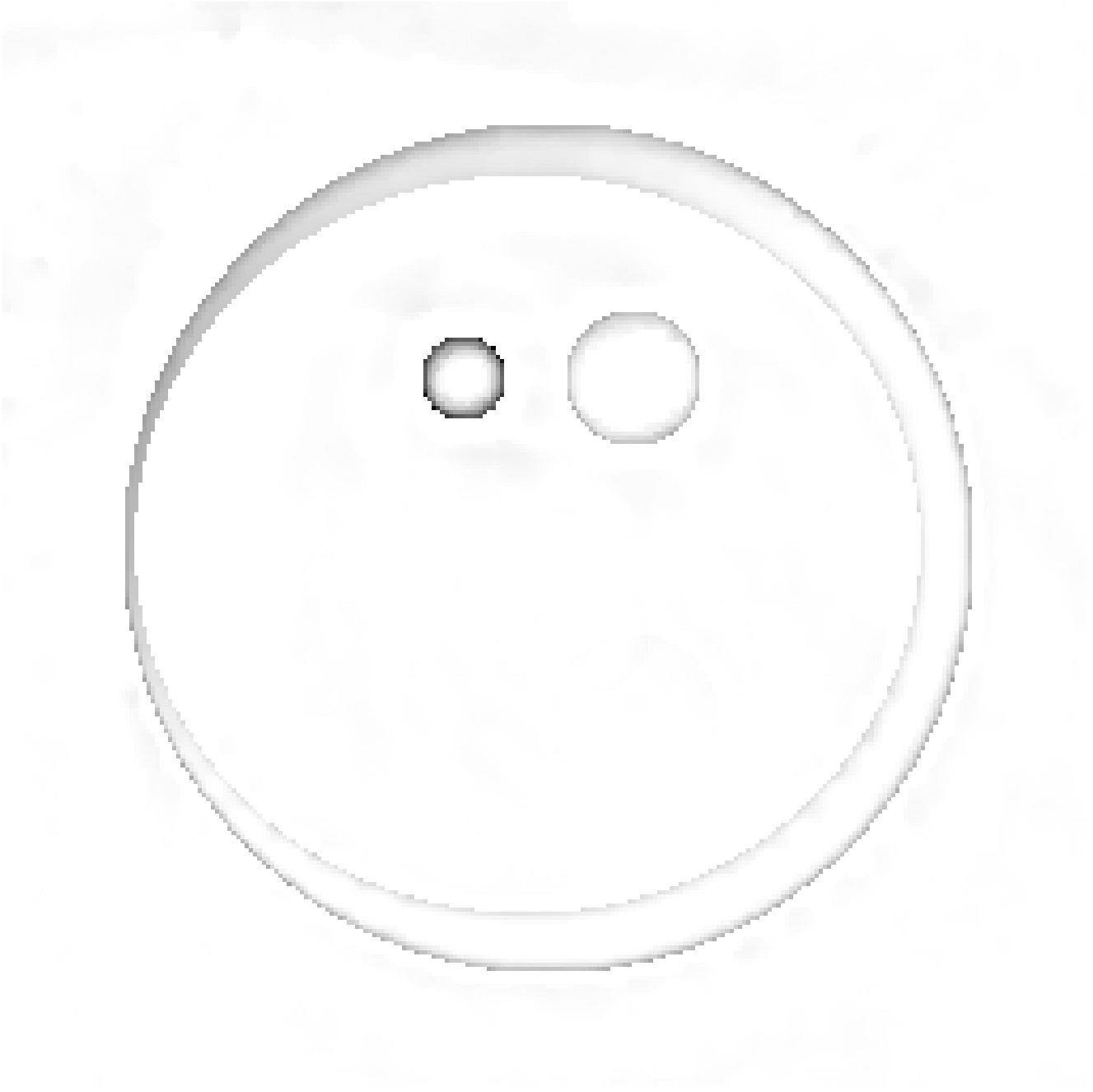}\\
	\end{minipage}
	\begin{minipage}{0.15\textwidth}
		\centering
		\includegraphics[width=\textwidth]{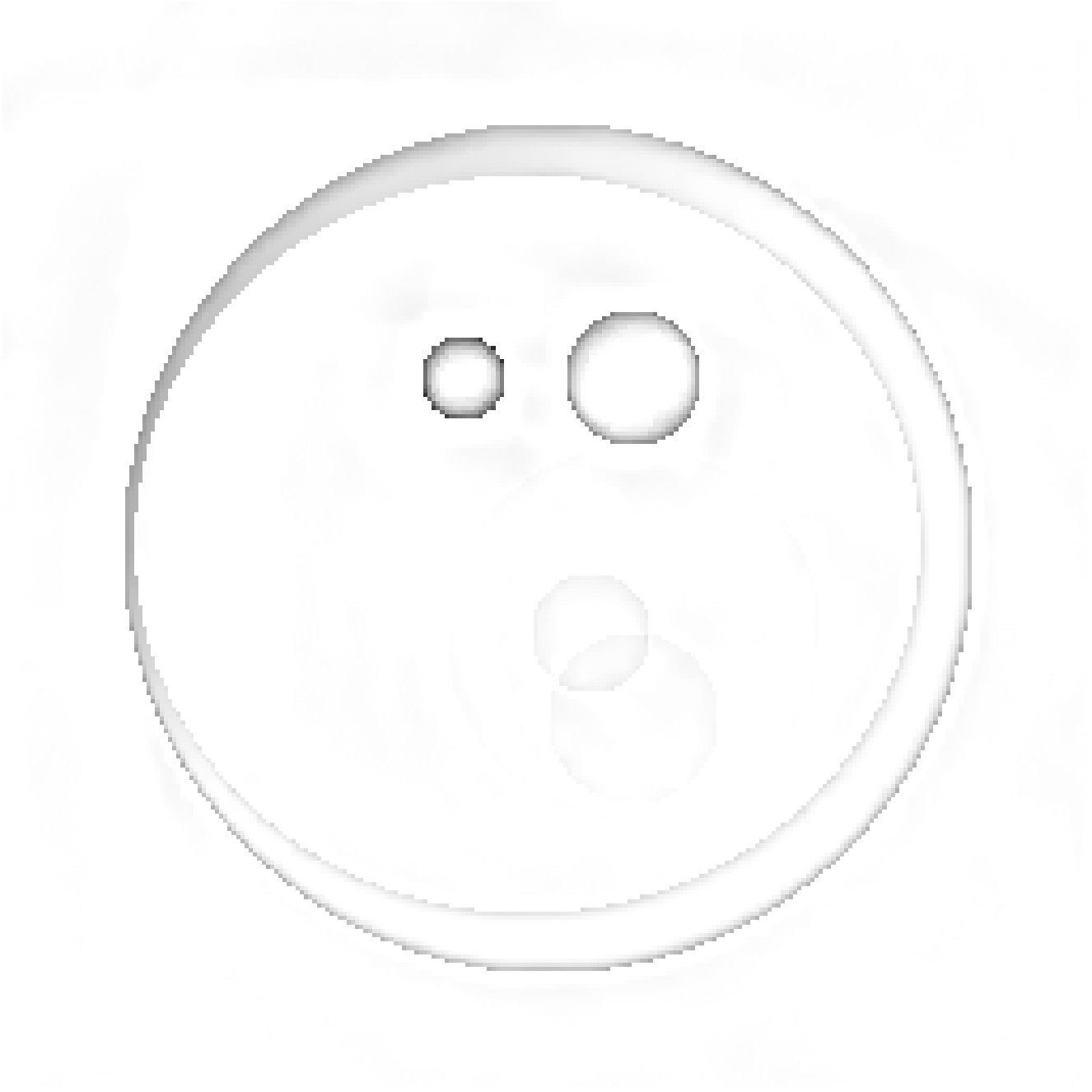}\\
	\end{minipage}
	\begin{minipage}{0.15\textwidth}
		\centering
		\includegraphics[width=\textwidth]{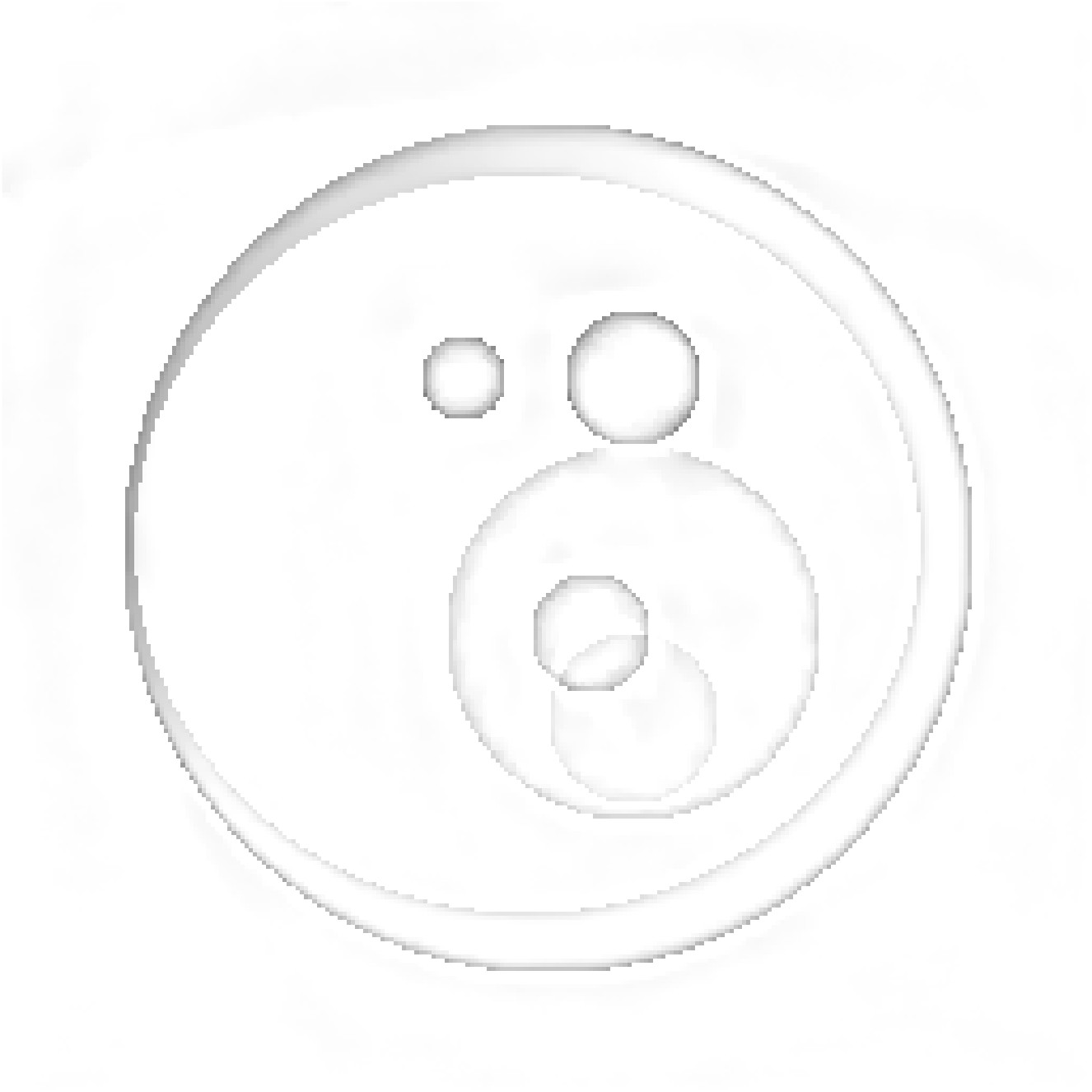}\\
	\end{minipage}
		\begin{minipage}{0.15\textwidth}
		\centering
		\includegraphics[width=\textwidth]{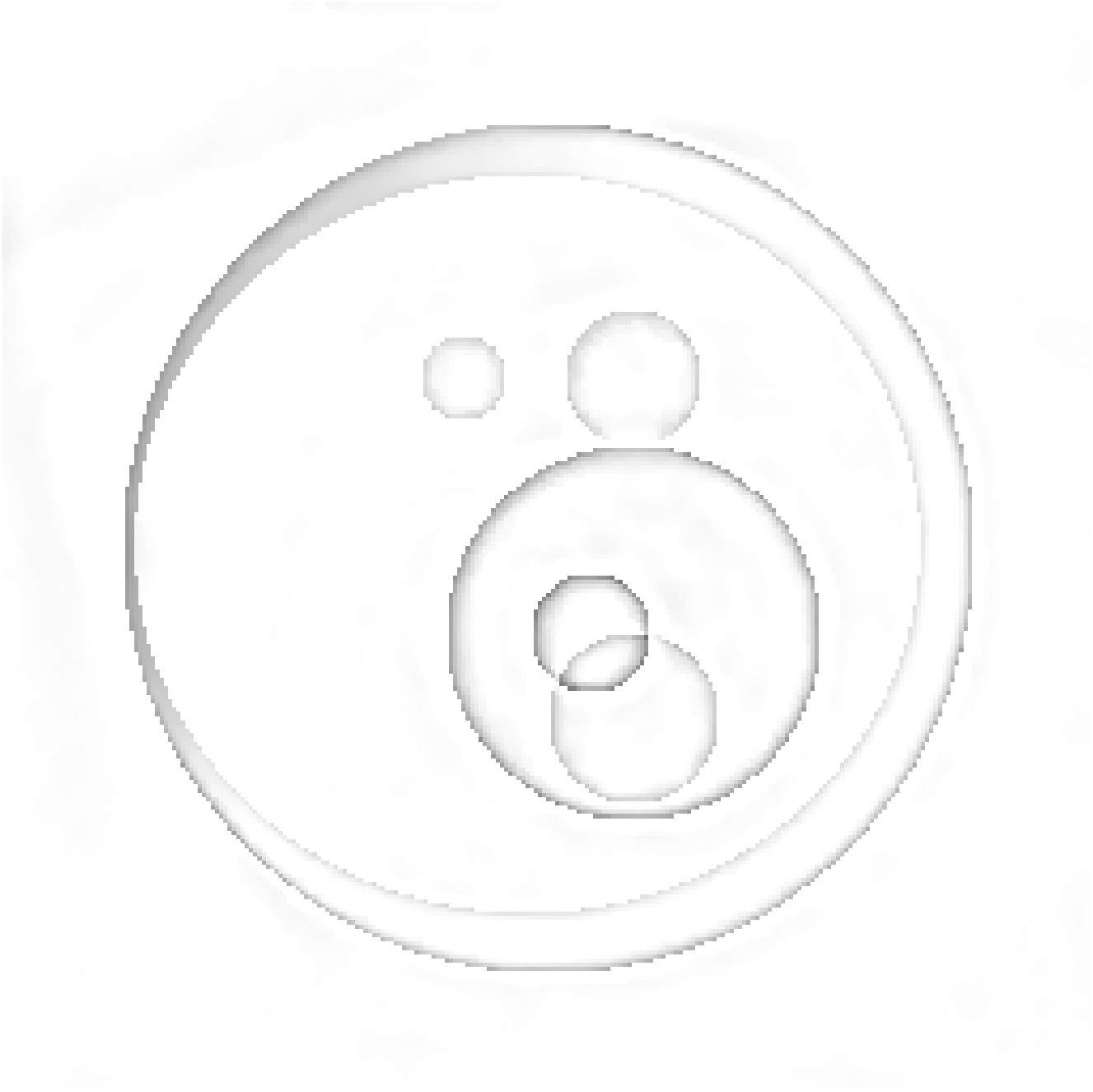}\\
	\end{minipage}
 \begin{minipage}{0.15\textwidth}
		\centering
		\includegraphics[width=\textwidth]{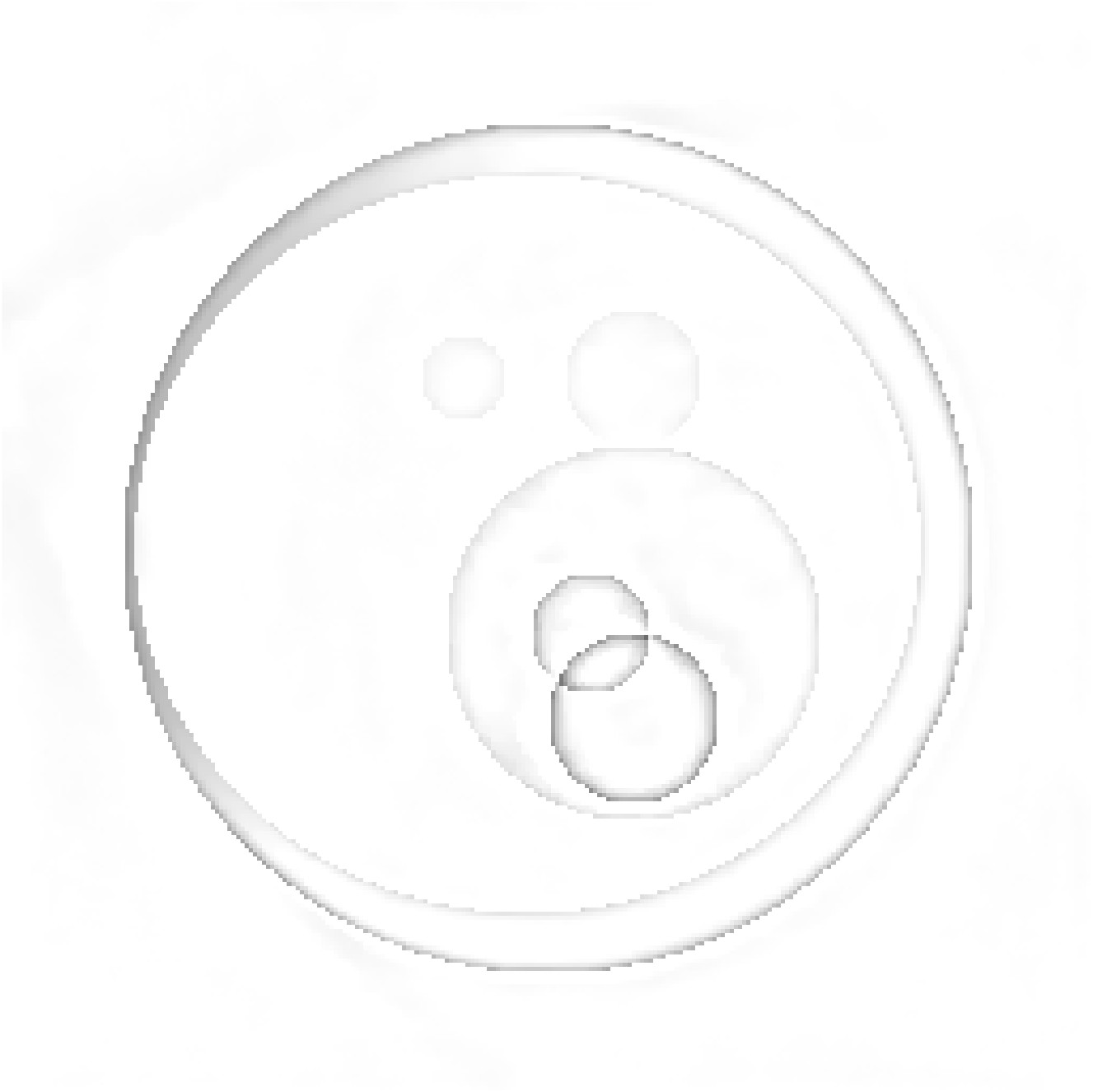}\\
	\end{minipage}
	\begin{minipage}{0.15\textwidth}
		\centering
		\includegraphics[width=\textwidth]{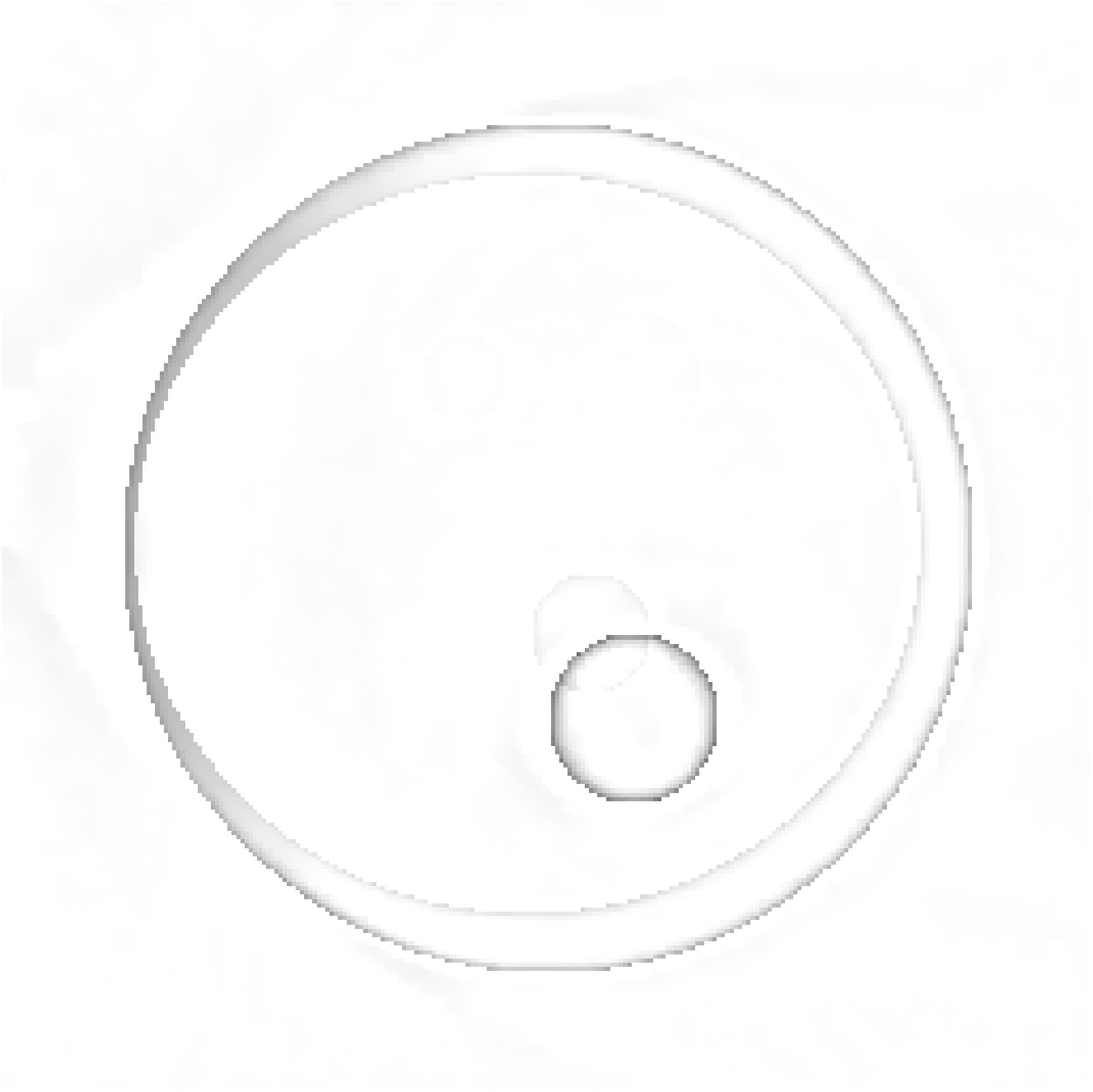}\\
	\end{minipage}
 
	\begin{minipage}{0.15\textwidth}
		\centering
		\includegraphics[width=\textwidth]{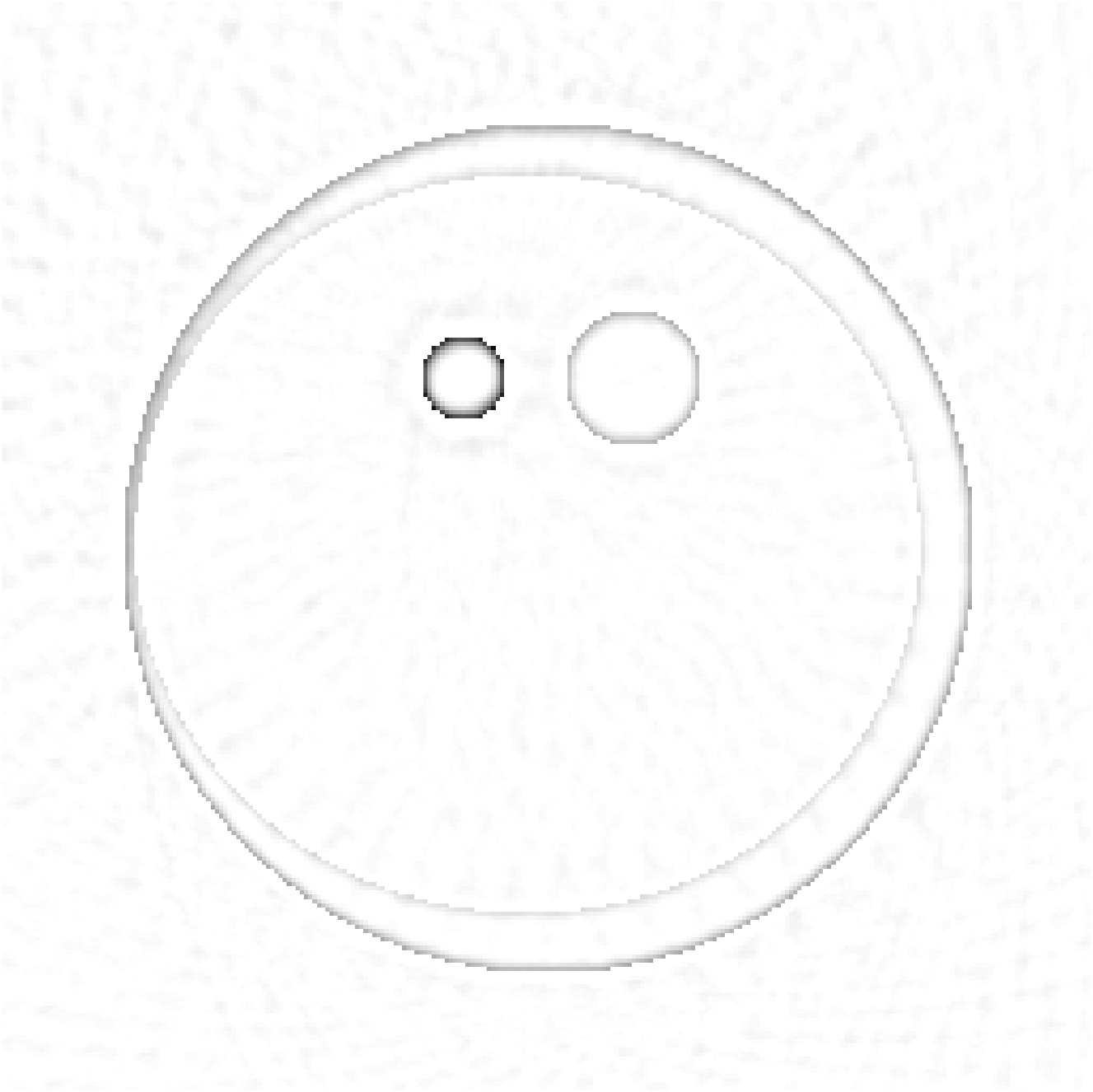}\\
	\end{minipage}
	\begin{minipage}{0.15\textwidth}
		\centering
		\includegraphics[width=\textwidth]{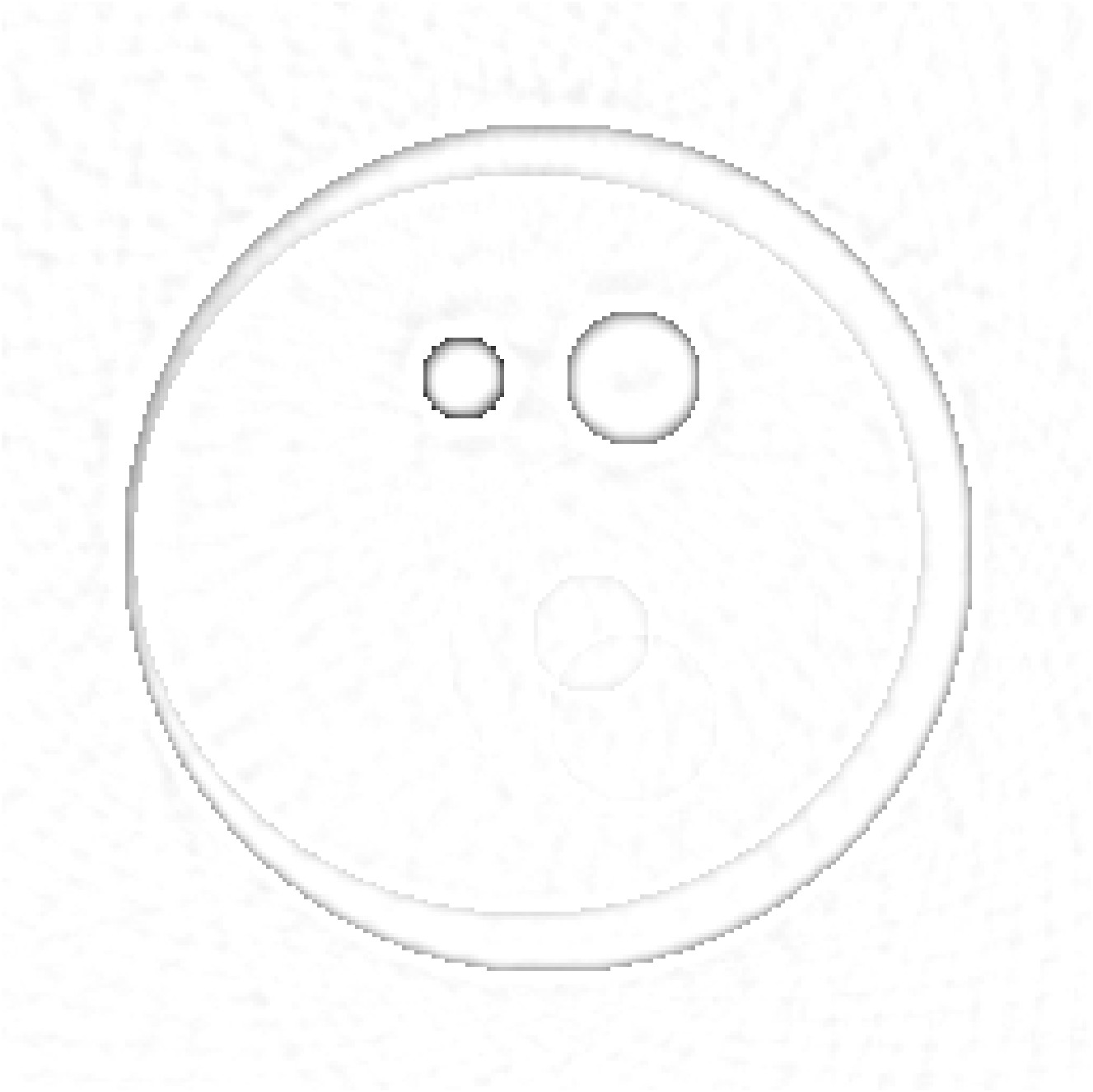}\\
	\end{minipage}
	\begin{minipage}{0.15\textwidth}
		\centering
		\includegraphics[width=\textwidth]{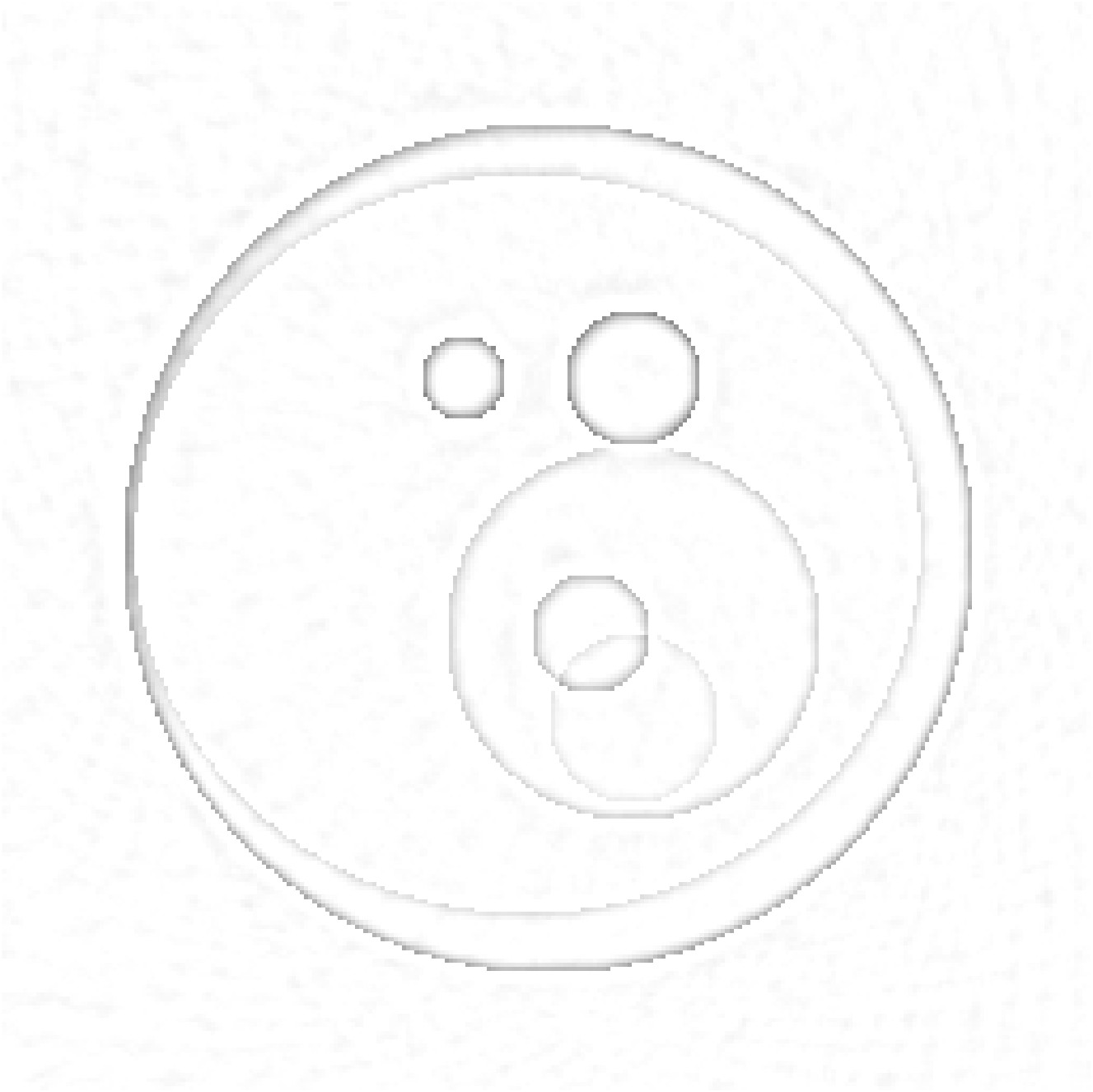}\\
	\end{minipage}
		\begin{minipage}{0.15\textwidth}
		\centering
		\includegraphics[width=\textwidth]{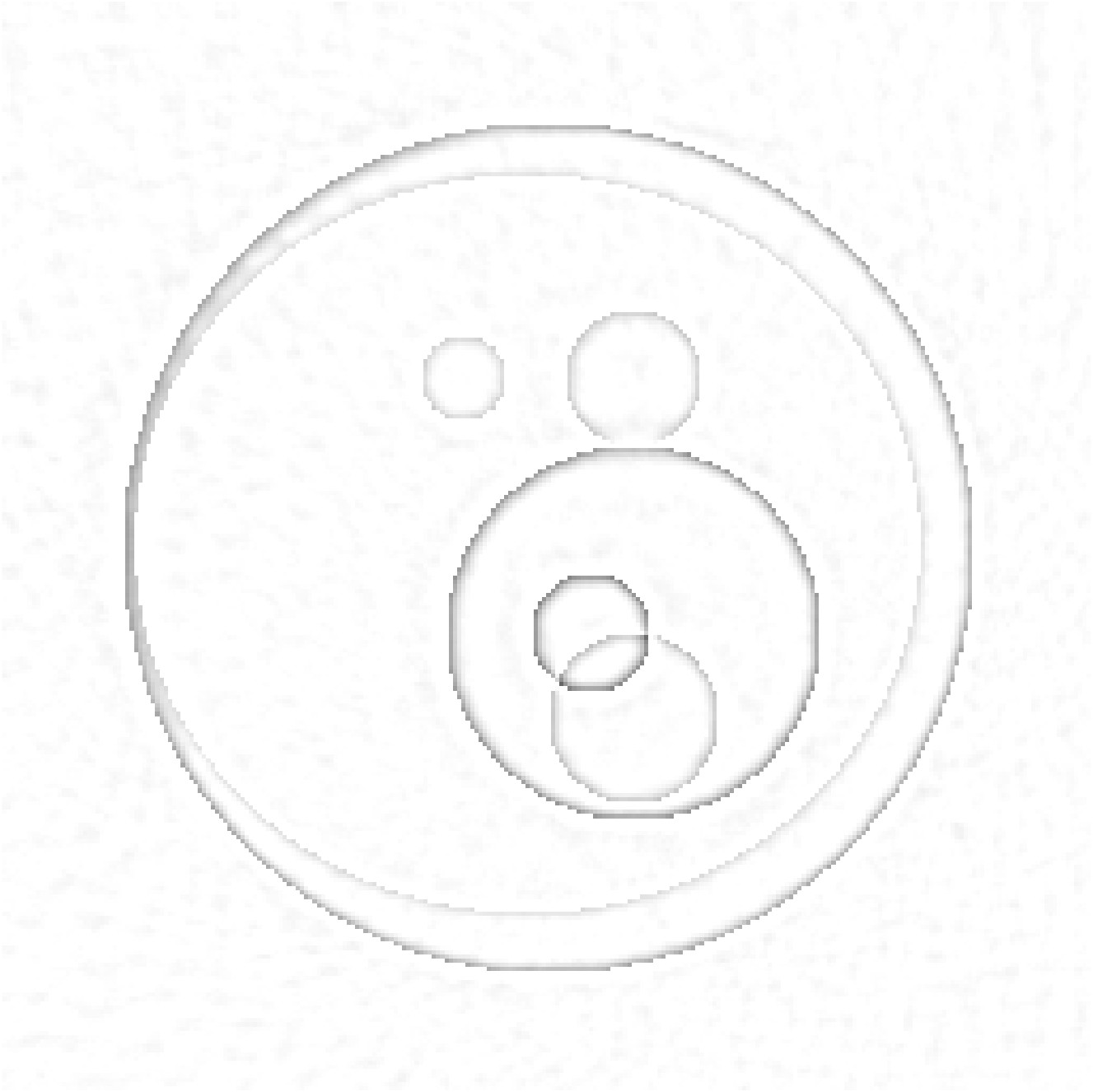}\\
	\end{minipage}
		\begin{minipage}{0.15\textwidth}
		\centering
		\includegraphics[width=\textwidth]{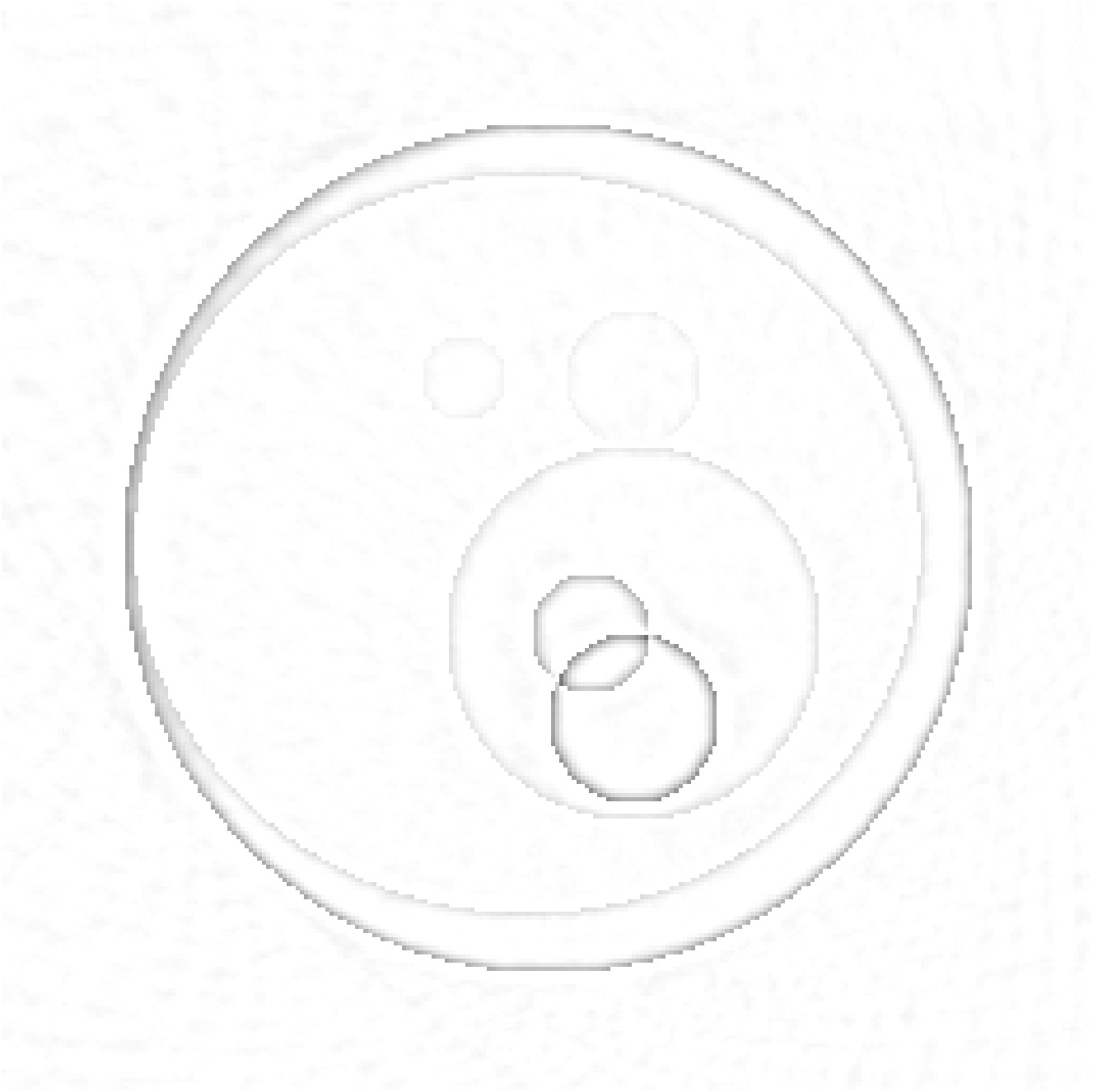}\\
	\end{minipage}
	\begin{minipage}{0.15\textwidth}
		\centering
		\includegraphics[width=\textwidth]{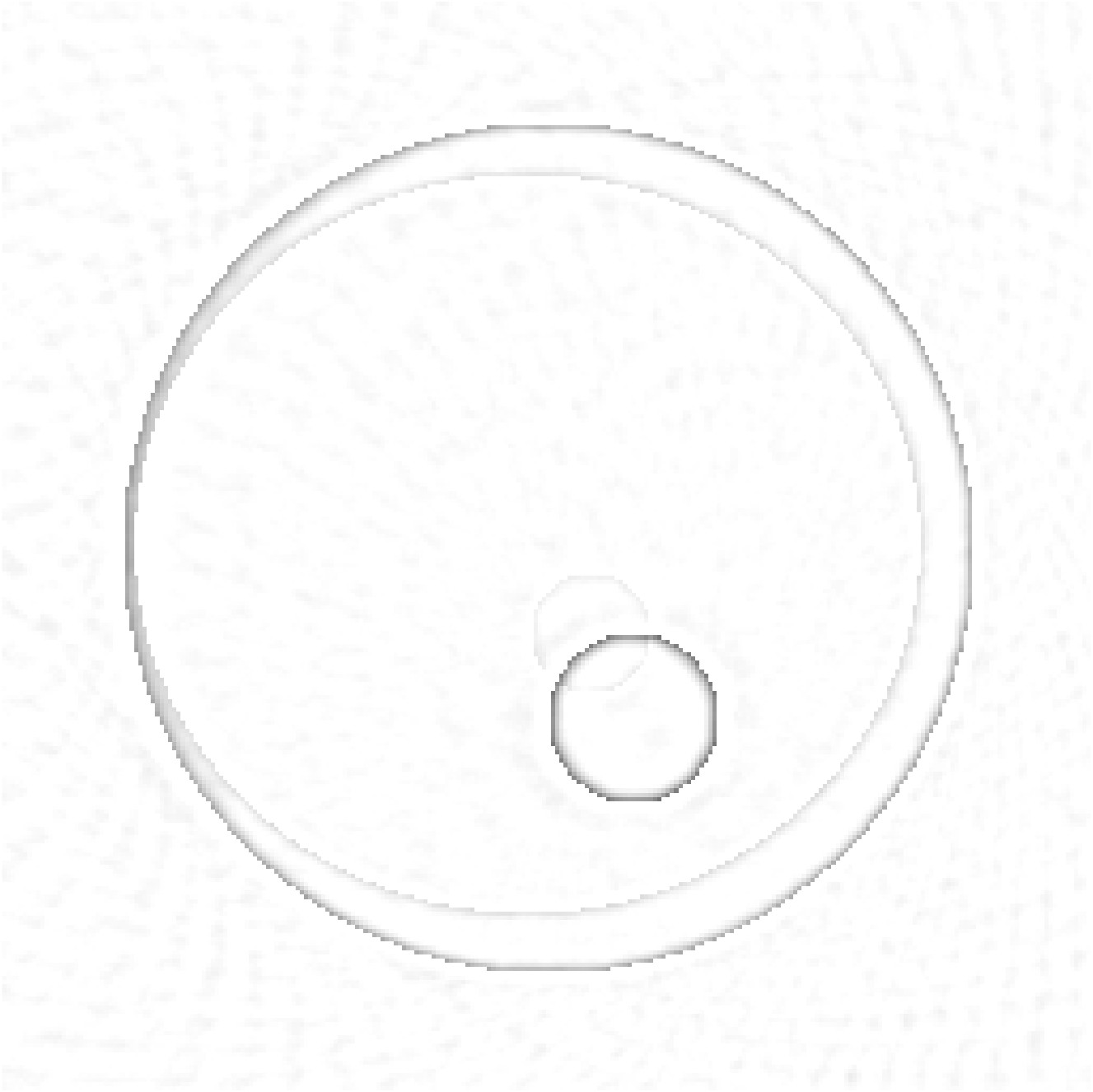}\\
	\end{minipage}

 	\begin{minipage}{0.14\textwidth}
		\centering
		\includegraphics[width=\textwidth]{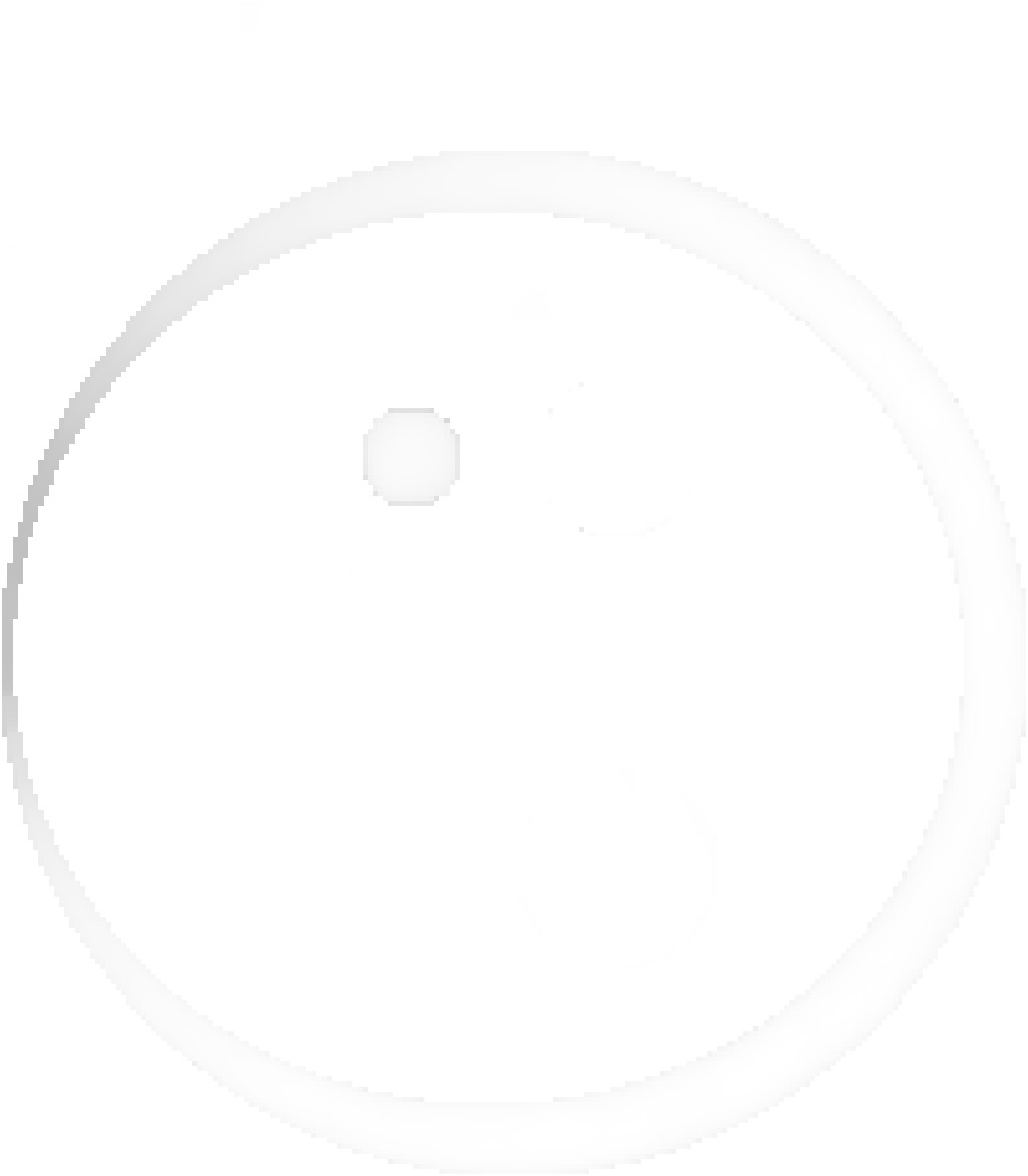}\\
	\end{minipage}
	\begin{minipage}{0.14\textwidth}
		\centering
		\includegraphics[width=\textwidth]{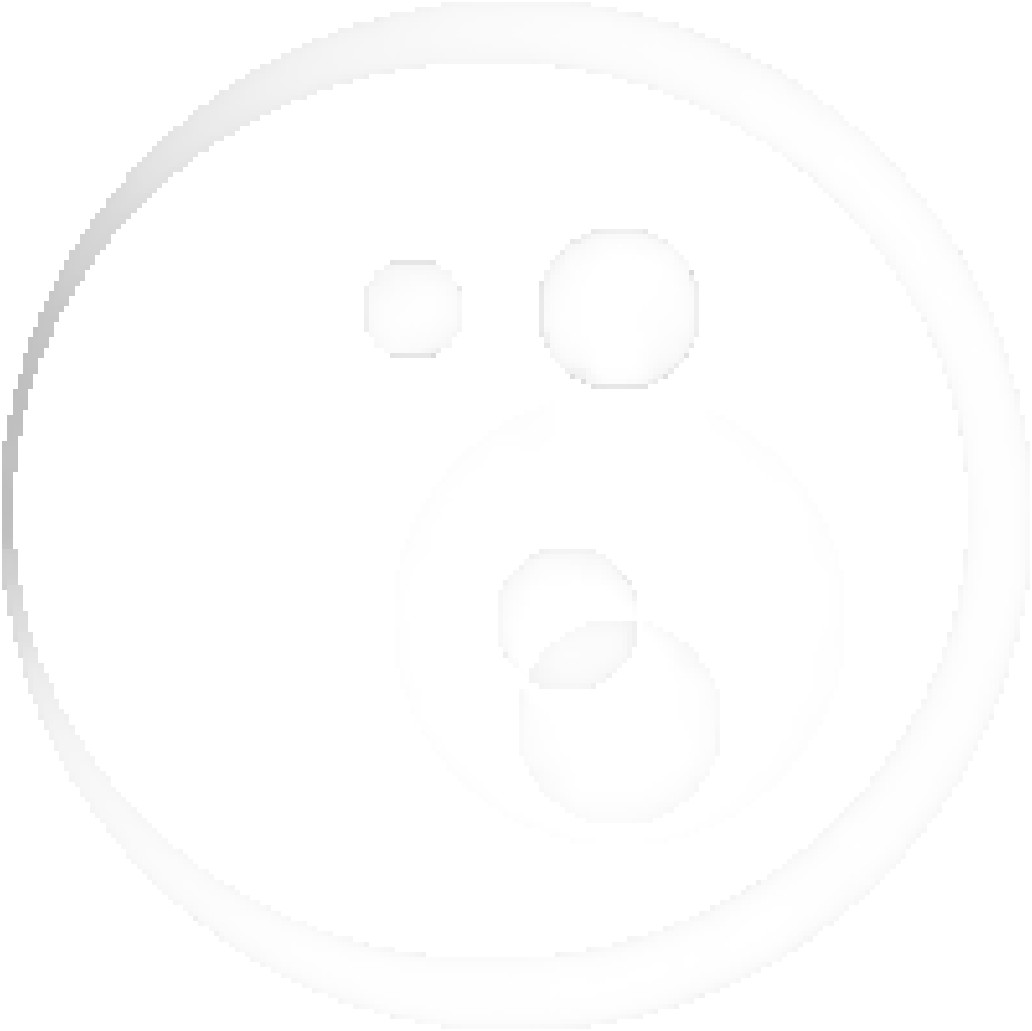}\\
	\end{minipage}
	\begin{minipage}{0.14\textwidth}
		\centering
		\includegraphics[width=\textwidth]{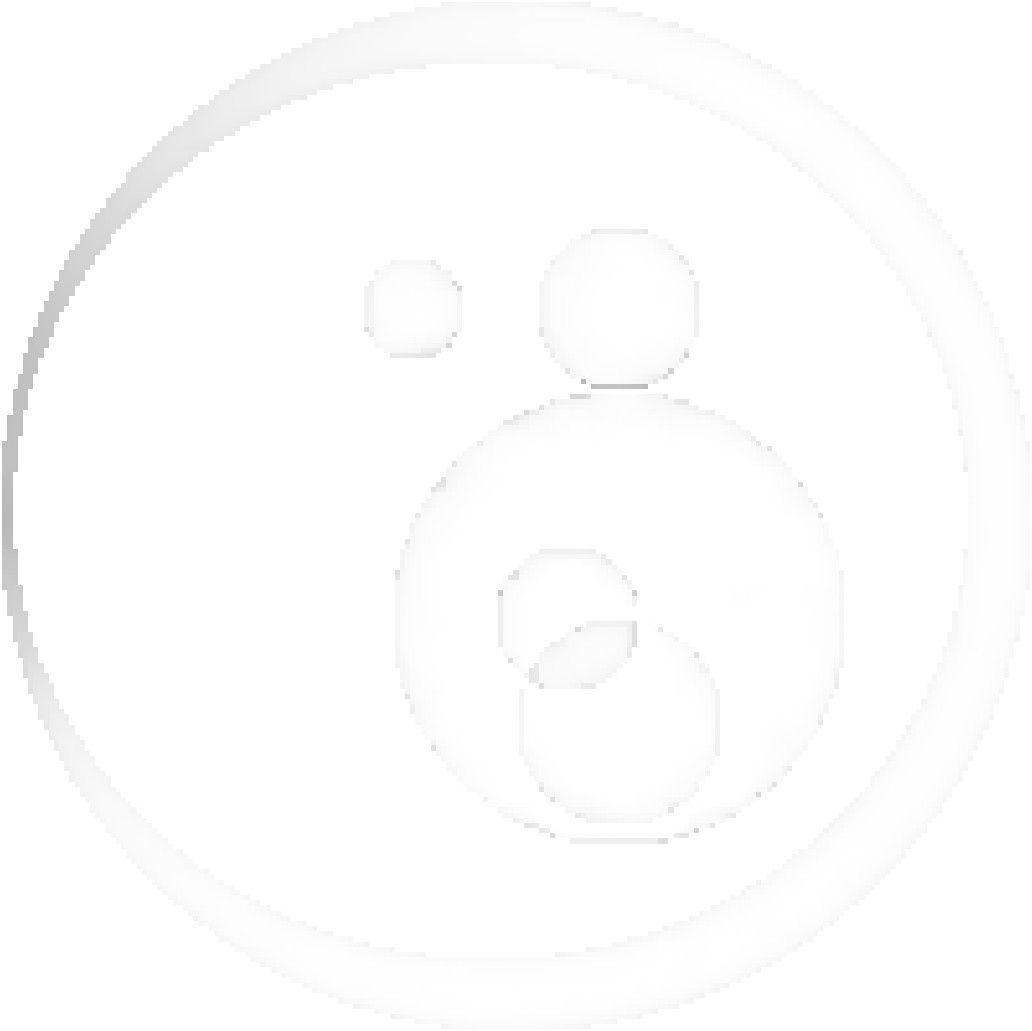}\\
	\end{minipage}
		\begin{minipage}{0.14\textwidth}
		\centering
		\includegraphics[width=\textwidth]{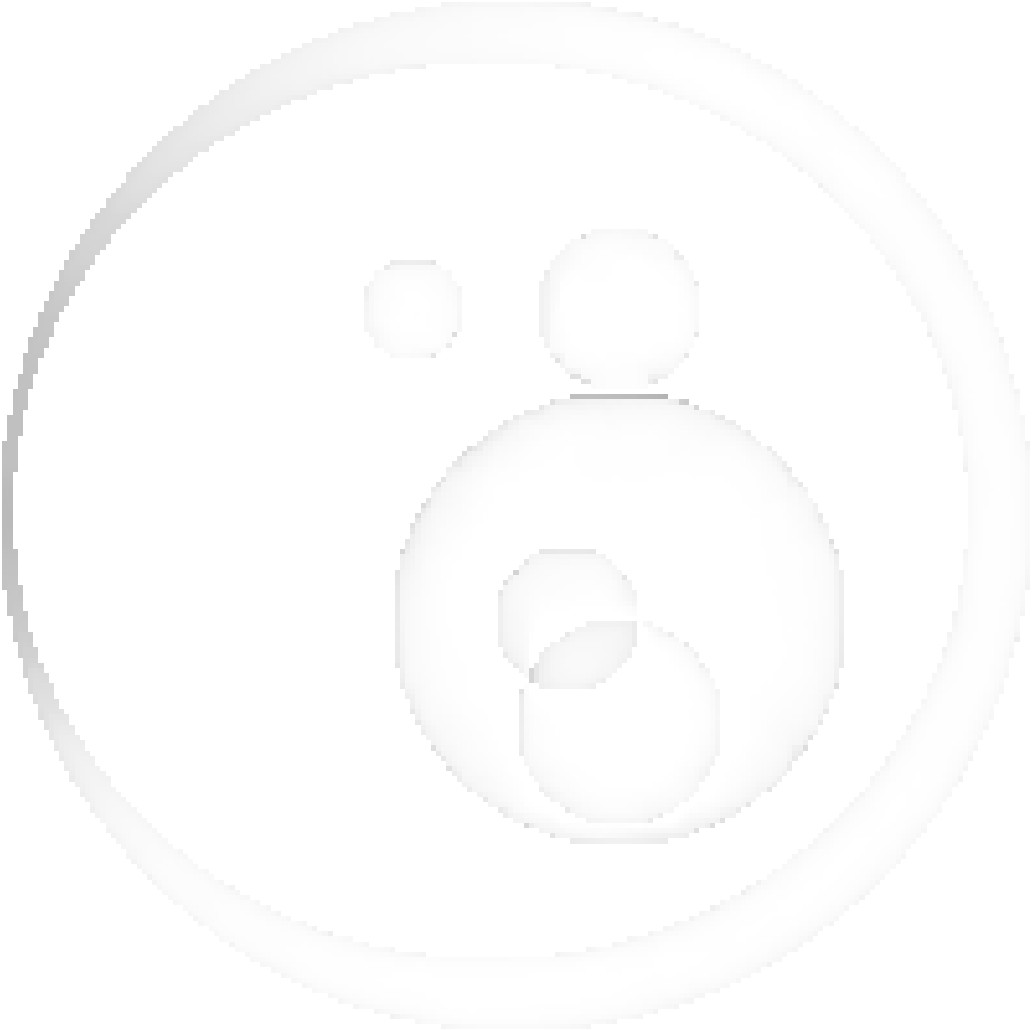}\\
	\end{minipage}
		\begin{minipage}{0.14\textwidth}
		\centering
		\includegraphics[width=\textwidth]{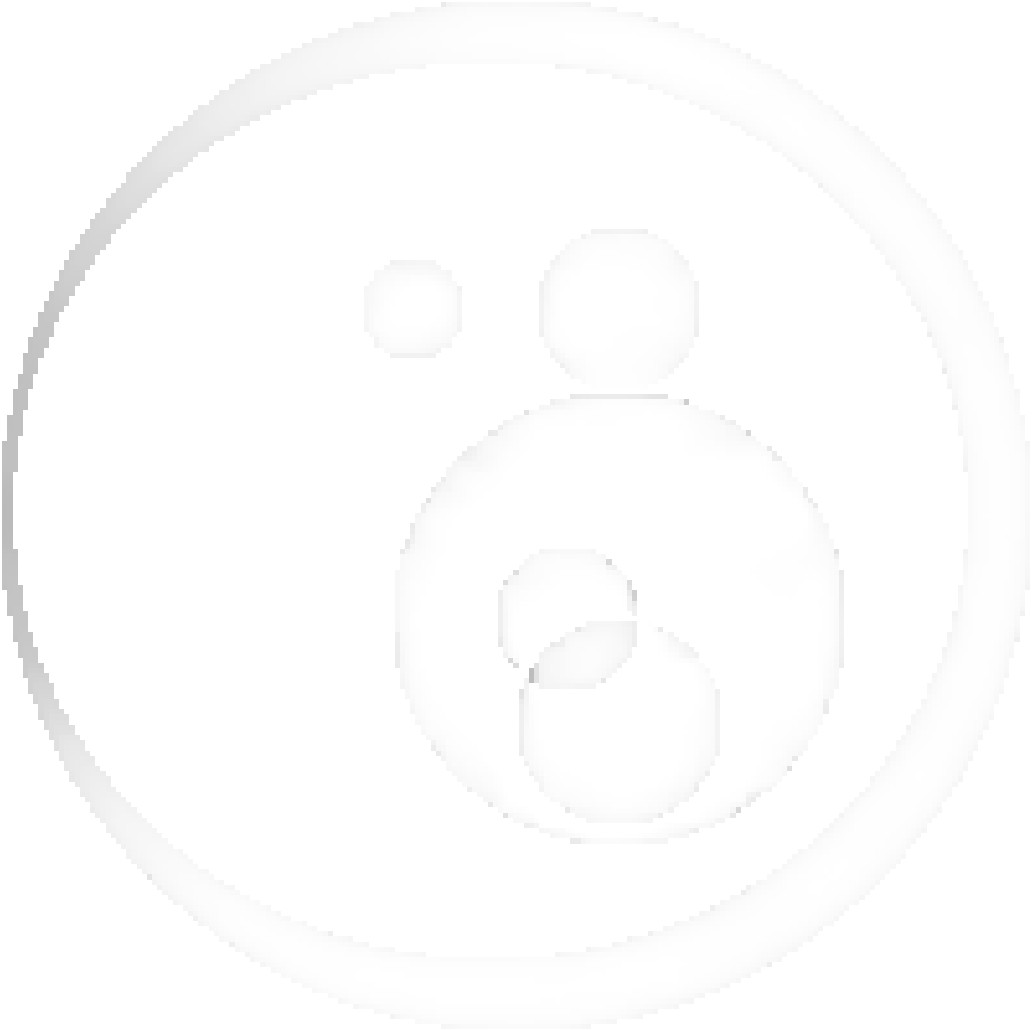}\\
	\end{minipage}
	\begin{minipage}{0.14\textwidth}
		\centering
		\includegraphics[width=\textwidth]{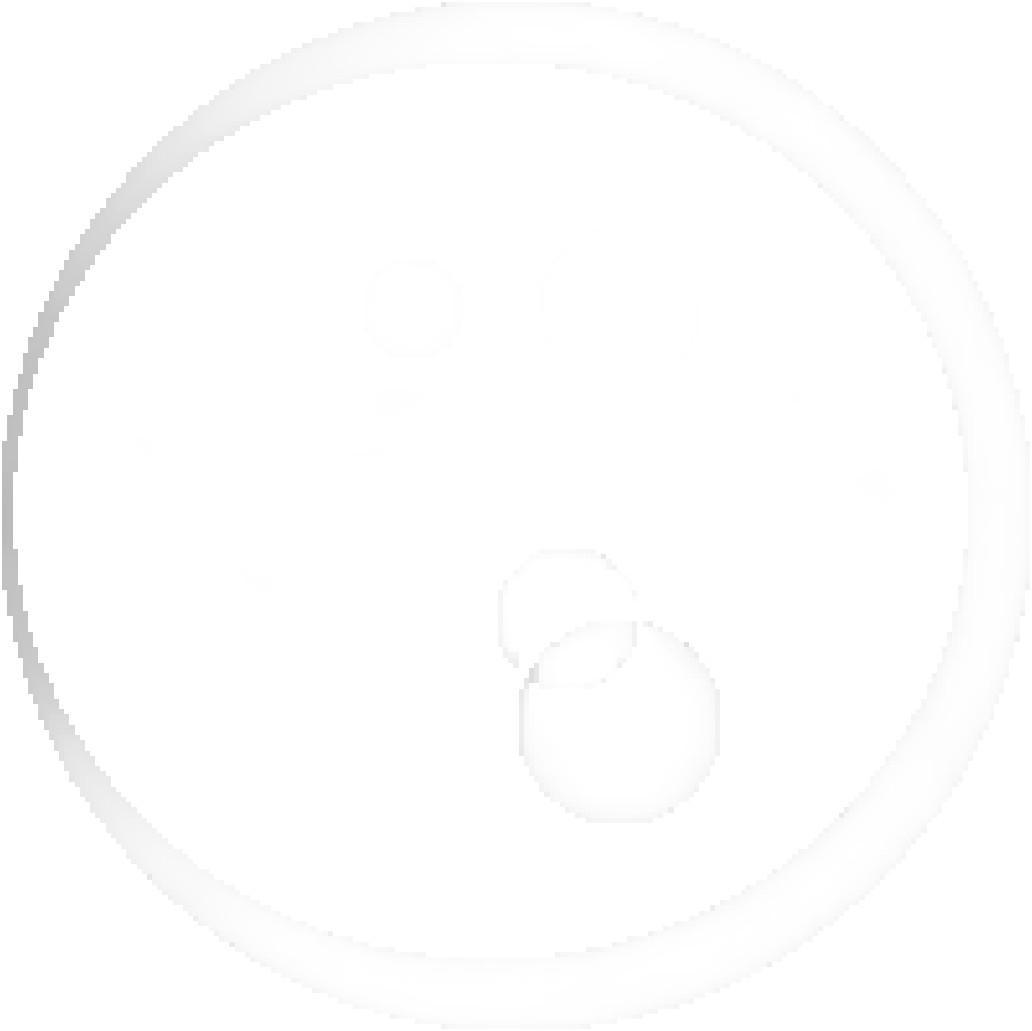}\\
	\end{minipage}
\caption{Dynamic PAT test problem ($\sigma = 1\%$).
Error images in the inverted color map at time steps $t=1, 10, 20, 30, 40, 50$ from left to right for MM-GKS after 15 iterations {(RRE $= 0.1322$, row 1)},
MM-GKS$_{\rm res}$ {(RRE $= 0.1264$, row 2)}, and
LM-MM-GKS {(RRE $= 0.0491$, row 3)}. }
\label{fig: PATRec}
\end{figure}

\begin{table}[ht!]
\centering
\small
\setlength{\tabcolsep}{5pt}
\begin{tabular}{@{} c ccc ccc ccc @{}}
\toprule
 & \multicolumn{3}{c}{MM-GKS$_{15}$}
 & \multicolumn{3}{c}{MM-GKS$_{\rm res}$}
 & \multicolumn{3}{c}{LM-MM-GKS} \\
\cmidrule(lr){2-4} \cmidrule(lr){5-7} \cmidrule(lr){8-10}
$\sigma$ & RRE & SSIM & PSNR
         & RRE & SSIM & PSNR
         & RRE & SSIM & PSNR \\
\midrule
0.1\% & 0.1264 & 0.9285 & 33.86
      & 0.0973 & 0.8849 & 36.13
      & 0.0476 & 0.9841 & 42.33 \\[2pt]
0.5\% & 0.1279 & 0.9254 & 33.75
      & 0.1099 & 0.8992 & 35.07
      & 0.0486 & 0.9836 & 42.16 \\[2pt]
1\%  & 0.1322 & 0.9162 & 33.46
      & 0.1264 & 0.9079 & 33.86
      & 0.0491 & 0.9833 & 42.07 \\[2pt]
5\%  & 0.1654 & 0.8453 & 31.52
      & 0.2381 & 0.8316 & 28.35
      & 0.0748 & 0.9647 & 38.41 \\
\bottomrule
\end{tabular}
\caption{Dynamic PAT test problem.
RRE, SSIM, and PSNR for several noise levels ($\sigma$) for MM-GKS after 15 iterations (memory-limited), MM-GKS$_{\rm res}$, and LM-MM-GKS, both after 500 iterations.}
\label{Table: table3}
\end{table}
In this example, we compare our proposed method LM-MM-GKS with the original MM-GKS method \cite{huang2017majorization}  and the recently proposed 
restarted MM-GKS method \cite{buccini2023limited}, 
denoted by MM-GKS$_{\rm res}$
for the full problem. 
For all methods, we set the maximum number of iterations to $500$, where for each method, one iteration corresponds to the addition of a new basis vector to the solution subspace.
Notice that $500$ iterations of MM-GKS requires storing and computing the QR factorizations for $\bA\bV_{500}$ and $\Psi\bV_{500}$ with vectors of size $886,900$
for $\bA\bV_{500}$ and size
$9,764,864$ for $\Psi\bV_{500}$. Even for this relatively small image size of $256 \times 256$ pixels, storing these matrices requires approximately $55$--$60$ GB of memory.
For more realistic image sizes of $1024 \times 1024$ pixels per time step, the storage for $\bPsi\bV_{500}$ alone would exceed $600$ GB, making MM-GKS prohibitively memory-intensive without compression or recycling.
To compare the behavior of the three  methods under severe memory constraints, which would be the case for (very) large problems, we assume that at most $15$ solution space basis vectors can be stored.
Hence, we show the reconstructions
from MM-GKS at $15$ iterations which we assume to be the hypothetical memory limit for this example.
For the compression in LM-MM-GKS we use the tSVD and we compress to $k_{\min}=5$ vectors, and we set $k_{\max} = 15$.
We also set the restart parameter for MM-GKS$_{\rm res}$ to $15$. LM-MM-GKS and MM-GKS$_{\rm res}$ can carry out 500 iterations, as they either restart or compress the solution space when the maximum of 15 solution space basis vectors is reached.
For MM-GKS$_{\rm res}$, we use the publicly available software package \cite{buccini2024software} without modifications. We note that the GCV implementation in this software did not perform well on this problem, so we use the DP for the regularization parameter selection in MM-GKS$_{\rm res}$, with $\epsilon = 1$ as recommended in the software documentation. LM-MM-GKS and MM-GKS use GCV with $\epsilon = 10^{-3}$. Python codes for MM-GKS can be found in 
\cite{pasha2025trips}.
We show the error images in the inverted colormap at time steps $t = 1$, $10$, $20$, $30$, $40$, $50$ in Figure \ref{fig: PATRec} for MM-GKS at 15 iterations (first row),  MM-GKS$_{\rm res}$(second row), and LM-MM-GKS (third row). In Table
\ref{Table: table3} we present
the RRE, SSIM, and PSNR for all three methods for noise levels varying from $0.1-5\%$. 

Table~\ref{Table: table3} shows that LM-MM-GKS consistently achieves RREs that are roughly $2.5$ to $3$ times smaller than those of MM-GKS$_{15}$ and MM-GKS$_{\rm res}$ across all noise levels, with correspondingly higher SSIM and PSNR values.

\section{Conclusions and future work}\label{sec: conclusion}
In this paper, we propose a limited memory technique for MM-GKS that 
periodically compresses the solution subspace while retaining information 
that is important for convergence,
using a fixed amount of memory, independent of the number of iterations, effectively eliminating otherwise serious memory constraints.  
Furthermore, making a crucial change to the search space update strategy, we are now able to prove convergence even under limited memory constraints. This provides a substantial improvement on the convergence theory for Majorization-Minimization methods using a subspace
approximation. 
We also propose a variant of LM-MM-GKS that is effective for streaming problems, that is, for scenarios where 
large-scale data exceeds memory limitations, or where all the data to be processed is not available at once.
We demonstrate that our proposed approach is able to effectively integrate new (incoming) data with selected (older) data to 
improve the approximate solution. 
In particular, we show that these techniques are effective in the context of nonlinear edge-preserving image reconstruction methods
that may require many iterations, updating the regularization operator
and dynamically selecting the regularization parameter at each iteration at a low computational cost.
Numerical examples from a wide range of applications, such as CT, image deblurring, and time-dependent inverse problems, illustrate the effectiveness of the proposed methods.
Important future work involves (1) more efficient implementations of these methods, in terms of memory, computational cost, and data movement, and (2) an extension of the method and the theory that will  allow us to prove convergence when determining the regularization parameter dynamically. 

\section*{Acknowledgments}
MP acknowledges support from the NSF under awards No.\ 2202846 and DMS 2410699 and from 
the Isaac Newton Institute (INI) for Mathematical Sciences, Cambridge, for hospitality during the programme ``Rich and Nonlinear Tomography'' 
where partial work on this manuscript was undertaken.
EdS acknowledges support by the NSF under Award No. 2208470.
MK acknowledges support by the NSF under CCF-1934553 and DMS-2410698 and also thanks the Turner-Kirk Charitable Trust for the support provided by a Kirk Distinguished Visiting Fellowship to attend the INI programme where partial work on this manuscript was undertaken.

\begin{remark}
An earlier version of this manuscript appeared as a preprint \cite{ouroldpaper}.  This is a substantially revised version, which includes changes to the original algorithm, new numerical results, new theoretical results, and a 
new title for the paper.    
\end{remark}

\paragraph{Reproducibility} The MATLAB codes for reproducing the numerical experiments in this paper are available on GitHub \url{https://github.com/mpasha3/LM-MM-GKS}. 

\bibliographystyle{siamplain}
\bibliography{references}
\end{document}